\newcommand{\N}{\mathbb{N}}
\newcommand{\Z}{\mathbb{Z}}
\newcommand{\R}{\mathbb{R}}
\newcommand{\Q}{\mathbb{Q}}
\newcommand{\C}{\mathbb{C}}
\renewcommand{\P}{\mathbb{P}}
\newcommand{\U}{\mathbb{U}}
\newcommand{\bfw}{\mathbf{w}}
\newcommand{\bfv}{\mathbf{v}}
\DeclareMathOperator{\e}{\mathrm{e}}
\DeclareMathOperator{\ind}{\mathbf{1}}
\DeclareMathOperator{\tr}{\mathrm{tr}}
\DeclareMathOperator{\card}{\mathrm{card}}
\newcommand{\floor}[1]{\left\lfloor #1 \right\rfloor}
\newcommand{\ceil}[1]{\left\lceil #1 \right\rceil}
\newcommand{\abs}[1]{\left| #1 \right|}
\newcommand{\norm}[1]{\left\| #1 \right\|}
\newcommand{\rb}[1]{\left( #1 \right)}
\newcommand{\inda}{\ind_{[a(n_{\ell}) = \alpha]}}
\newcommand{\indb}{\ind_{[b(n_{\ell}) = \alpha]}}
\newcommand{\indbi}{\ind_{[b_i(n_{\ell}) = \alpha]}}
\newcommand{\cA}{\mathcal{A}}
\newtheorem{theorem}{Theorem}[section]
\newtheorem{lemma}[theorem]{Lemma}
\newtheorem{proposition}[theorem]{Proposition}
\newtheorem{corollary}[theorem]{Corollary}
\theoremstyle{definition}
\newtheorem{definition}[theorem]{Definition}
\newtheorem{example}[theorem]{Example}
\newtheorem{conjecture}[theorem]{Conjecture}
\newtheorem{fact}[theorem]{Fact}
\theoremstyle{remark}
\newtheorem{remark}[theorem]{Remark}
\numberwithin{equation}{section}
\begin{document}

\title{(Logarithmic) Densities for automatic sequences along primes and squares}


\author{Boris Adamczewski}
\address{Univ Lyon, Universit\'e Claude Bernard Lyon 1, CNRS UMR 5208, Institut Camille Jordan, 43 blvd. du 11 novembre 1918, F-69622 Villeurbanne cedex, France}
\curraddr{}
\email{boris.adamczewski@math.cnrs.fr}
\thanks{The first and third author were supported by the European Research Council (ERC) under the European Union's Horizon 2020 research and innovation program under the Grant Agreement No 648132.}

\author{Michael Drmota}
\address{Institut f\"ur Diskrete Mathematik und Geometrie TU Wien, Wiedner Hauptstr. 8–10, 1040
Wien, Austria}
\curraddr{}
\email{michael.drmota@tuwien.ac.at}
\thanks{The second and third author were supported by the Fond zur F\"orderung der wissenschaftlichen Forschung (FWF), grant SFB F55-02
"Subsequences of Automatic Sequences and Uniform Distribution"}

\author{Clemens M\"ullner}
\address{Institut f\"ur Diskrete Mathematik und Geometrie TU Wien, Wiedner Hauptstr. 8–10, 1040
Wien, Austria}
\curraddr{}
\email{clemens.muellner@tuwien.ac.at}

\subjclass[2020]{Primary: 11B85, 11L20, 11N05; Secondary: 11A63, 11L03.}

\keywords{Automatic sequences, logarithmic density, primes, squares}

\date{}

\dedicatory{}

 \begin{abstract}
  In this paper we develop a method to transfer density results for primitive automatic sequences to logarithmic-density results for general automatic sequences. As an application we show that the logarithmic densities of any automatic sequence along squares $(n^2)_{n\geq 0}$ and primes $(p_n)_{n\geq 1}$ exist and are computable. Furthermore, we give for these subsequences a criterion to decide whether the densities exist, in which case they are also computable.
  In particular in the prime case these densities are all rational.  We also deduce from a recent result of the third author and Lema\'nczyk that all subshifts  generated by  automatic sequences  are orthogonal to any bounded multiplicative aperiodic function. 

  \end{abstract}

\maketitle

\section{Introduction}
  
Automatic sequences are sequences $a(n)$ on a finite alphabet that
are the output of a finite automaton (where the input is the sequence of digits of $n$ in some base $k\ge 2$).
Equivalently, they can also be defined as codings of fixed points of morphisms of constant length.
These kind of sequences have received a lot of attention during the last 15 or 20 years (see for example
the book by Allouche and Shallit \cite{Allouche2003}).
In particular there are very close relations to number theory, dynamical systems, and algebra. 
The most prominent examples of automatic
sequences are the Thue-Morse sequence $t(n)$ and the Rudin-Shapiro sequence $r(n)$.\footnote
{The Thue-Morse sequence can be defined by $t(n) = s_2(n) \bmod 2$, where $s_2(n)$ denotes the
number of $1$'s in the binary expansion of $n$, and the Rudin-Shapiro sequence by 
$r(n) = B_{11}(n) \bmod 2$, where $B_{11}(n)$ denotes the number of consecutive $11$-blocks in the
binary expansion of $n$.}

Automatic sequences are {\it deterministic sequences} in the sense that they generate a topological dynamical system (subshift) with \emph{zero entropy}. Stated differently, their subword complexity, that is, 
the number of different subwords of length $l$, is subexponential. Actually the subword complexity 
of automatic sequences is at most
linear in $l$, which is the lowest possible growth order if we exclude just eventually periodic sequences, which have bounded subword
complexity. Deterministic sequences have been intensively studied within the last few years in relation to 
the Sarnak conjecture \cite{Sarnak2011}, which says that deterministic sequences $d(n)$ are asymptotically orthogonal to the M\"obius function
$\mu(n)$:
\[
\sum_{n\le x} d(n) \mu(n) = o(x) \qquad (x\to\infty).
\]
This conjecture, which is related to the Chowla conjecture (see for example~\cite{Tao2017} or ~\cite{Gomilko2018}), is in general open.
There is, however, big progress on the logarithmic version of the Chowla conjecture by Tao~\cite{Tao2016a} and Tao and Ter\"av\"ainen~\cite{Tao2018}, and also for the logarithmic version of the Sarnak conjecture by Frantzikinakis and Host~\cite{Frantzikinakis2018}.
For a relatively recent survey on the Sarnak conjecture see~\cite{Ferenczi2018}.
Recently the last author verified the Sarnak conjecture for all automatic sequences $d(n) = a(n)$ \cite{Muellner2017},
and even more recently he extended together with Lemanczyk~\cite{Lemanczyk2018} the orthogonality relation to multiplicative functions $f(n)$ with
$|f(n)|\le 1$ (and some mild but natural conditions on $a(n)$): 
\[
\sum_{n\le x} a(n) f(n) = o(x) \qquad (x\to\infty).
\]

The Sarnak conjecture (and the above variant for multiplicative functions) is closely related to 
the asymptotic properties of 
\[
\sum_{n\le x} d(n) \Lambda(n),
\]
where $\Lambda(n)$ denotes the von Mangoldt $\Lambda$-function\footnote{$\Lambda(n) = \log p$ for
prime powers $n = p^k$ and $\Lambda(n) = 0$ else.} and one usually conjectures that these
sums are asymptotically of the form $c x (1 +o(1))$ for some constant $c$. This property is very
close to {\it prime number theorems} of the kind
\[
\{ p \le x : p\in \mathbb{P}, \ d(p) = \alpha \} = c \,\frac x{\log x} (1+o(1)).
\]
For the Thue-Morse sequence $t(n)$ such a prime number theorem 
\[
\{ p \le x : p\in \mathbb{P}, \ t(p) = 0 \} \sim \{ p \le x : p\in \mathbb{P}, \ t(p) = 1 \} \sim  \frac 12 \frac x{\log x}
\]
was already conjectured in 1968/69 by Gelfond \cite{Gelfond1967} (in a slightly more general form). This conjecture was
finally proved in 2009 by Mauduit and Rivat \cite{Mauduit2010} in a breakthrough paper. 
Gelfond conjectured, too, that the Thue-Morse sequence behaves nicely along polynomial subsequences:
\[
\{ n \le x : t(P(n)) = 0 \}  \sim \{ n \le x : t(P(n)) = 1 \} \sim  \frac x2 ,
\]
where $P(x)$ is a non-negative integer valued polynomial. This question turned out to be even more challenging.
It was finally solved (again) by Mauduit and Rivat \cite{Mauduit2009} for quadratic polynomials but for polynomials of
degree at least 3 there is only partial information available \cite{Drmota2011}; the question by Gelfond is still open.

We cannot expect such {\it strong} results for general automatic sequences. For example, if $a(n)$ denotes the
leading digit in the $k$-ary expansion of $n$ (with $k\ge 3$) then neither the {\it densities}
\[
d(a(n),\alpha) = \lim_{x\to\infty} \frac{ \{n\le x : a(n) = \alpha \} }x =   \lim_{x\to\infty} \frac 1x \sum_{n\le x} {\bf 1}_{[a(n) = \alpha]}  \,,
\]
nor the densities along primes 
\[
d(a(p_n),\alpha) = \lim_{x\to\infty} \frac{ \{n\le x : a(p_n) = \alpha \} }x =   \lim_{x\to\infty} \frac 1x \sum_{n\le x} {\bf 1}_{[a(p_n) = \alpha]}  
\]
exist for $1\leq \alpha <k$. Nevertheless --  and this a general property for automatic sequences (see \cite{Cobham1972}) -- the  
{\it logarithmic densities}
\[
d_{\log }(a(n),\alpha) = \lim_{x\to\infty} \frac 1{\log x}  \sum_{n\le x}  \frac 1n {\bf 1}_{[a(n) = \alpha]}=\frac{\log (1+1/\alpha)}{\log k}
\]
exist. The question whether a density exists or only a logarithmic density exists depends mainly on the behavior of the final strongly connected components of the corresponding finite automata.
Furthermore, if densities exist they can be explicitly computed and are rational numbers \cite{Cobham1972}.
In the case, when only logarithmic densities exist, this is not that clear. 

The above mentioned results are related to density results of special automatic sequences along 
special subsequences: the subsequence of primes $p$ and the subsequence of squares $n^2$. 
The purpose of the present paper is to study quite general subsequences of automatic sequences and to give
answers to the question whether a density or logarithmic density along subsequences exist.
In particular we will give a complete answer for the subsequence of primes and squares (Theorems~\ref{Th2} and \ref{th_squares_log}).
For these cases we will show that logarithmic densities always exist. In other terms this means that, for every
automatic sequence $a(n)$, the
following limits always exist:
\[
\lim_{x\to \infty} \frac 1{\log x} \sum_{n\le x} \frac 1n a(n) \Lambda(n) \quad \mbox{and}\quad
\lim_{x\to \infty} \frac 1{\log x} \sum_{n\le x} \frac 1n a(n^2).
\]
and we can decide, when the non-logarithmic versions hold. 
We want to add that for some special classes of automatic sequences, that is, invertible automatic sequences or
automatic sequences related to block-additive functions, this is already known \cite{Drmota2012}, \cite{Muellner2018}.

\medskip

In order to state our main results we have to introduce some notation. First of all we will only consider
strictly increasing subsequences $(n_\ell)_{\ell\ge 0}$ of the positive integers that behave as
\begin{align}\label{eq_gamma}
n_\ell = \ell^\gamma L(\ell),
\end{align}
where $\gamma \ge 1$ and $L(n)$ is slowly varying in the sense that
\begin{align}\label{eq_lim_L}
\lim_{\ell\to\infty} \frac{L(\ceil{\delta \ell})}{L(\ell)} = 1
\end{align}
for all $0< \delta < 1$. 
Such sequences $(n_{\ell})_{\ell\ge 0}$ are called \emph{regularly varying sequences}, see Section~\ref{sec_n_l}.
The sequence of primes, polynomial sequences, and Piatetski-Shapiro sequences (\emph{i.e.,} 
$\lfloor n^c \rfloor$, where $c>1$) 
provide relevant examples of regularly varying sequences.

As mentioned above every automatic sequence $a(n)$ can be generated by a finite automaton. Without loss
of generality we can assume that this automaton is minimal (see \cite{Allouche2003}). This automaton can be seen as
a directed graph, possibly with loops and multiple edges, where every vertex (or state) has out-degree $k$
and for every vertex the out-going edges are labeled by $0,1,\ldots,k-1$.\footnote
{We will be more precise in Section~\ref{sec:automatic}} The set $\{ 0,1,\ldots,k-1\}$ is the \emph{input alphabet}. 
One vertex of this graph is distinguished
as the initial state. Clearly, this graph decomposes into strongly connected components.\footnote{A strongly connected component is a maximal subset of vertices, such that for any two vertices $u, v$ in this set, there exists a directed path from $u$ to $v$ and vice-versa.}
A strongly connected component is called final if there is no edge from this component to another one.
We will say that an automatic sequence is primitive and prolongable if the 
directed graph of the corresponding minimal automaton is strongly connected and the initial state has a $0$-labeled loop.

Our first result says that it is sufficient to consider such automatic sequences.

\begin{theorem}\label{Th1}
Suppose that $(n_\ell)_{\ell \ge 0}$ is a regularly varying sequence and suppose that for any primitive 
and prolongable automatic sequence $\tilde a(n)$ the 
densities along the subsequence $(n_\ell)$ 
\[
d(\tilde a(n_\ell),\alpha) := \lim_{x\to\infty}  \frac{\{\ell \le x : \tilde a(n_\ell) = \alpha \} }x
\] 
 exist. 

Then the two following properties hold. 

\begin{itemize}
\item[{\rm (i)}] Then for every automatic sequence $a(n)$ the logarithmic densities
\[
d_{\log }(a(n_\ell),\alpha) := \lim_{x\to\infty}\frac{1}{\log x}  \sum_{\ell \leq x} \frac{1}{\ell} {\bf 1}_{[a(n_\ell) = \alpha]}  
\]
exist and can be explicitly computed.

\item[{\rm (ii)}] Furthermore, if the densities along the subsequence $n_\ell$ corresponding 
to the automatic sequences that are generated by the final strongly connected components 
of the directed graph are all equal then the densities
\[
d(a(n_\ell),\alpha):= \lim_{x\to\infty}  \frac{\{\ell \le x :  a(n_\ell) = \alpha \} }x
\]
exist and are equal to the corresponding densities of the final strongly connected components. 
\end{itemize}
\end{theorem}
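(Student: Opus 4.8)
The plan is to analyze the minimal automaton of $a(n)$ via its decomposition into strongly connected components (SCCs) and to track which SCC is ``active'' after reading the base-$k$ digits of $n_\ell$. Write the reversed base-$k$ reading of $n$ (the automaton processes digits, say, most-significant first) so that the state reached after the full input of $n$ determines $a(n)$. The key structural fact about minimal DFAs of automatic sequences is that once a run enters an SCC $C$, it can only move to SCCs lying ``below'' $C$ in the partial order, and the \emph{final} SCCs are exactly the sinks of this order. For a ``generic'' integer $n$ with many digits, the run spends its long initial segment passing through non-final SCCs and then lands in some final SCC, where it wanders for the remaining bulk of the digits. The first step is therefore to make precise the notion of the \emph{limiting final SCC} $\Phi(n)$ and to show that, for the purpose of computing densities along $(n_\ell)$, one may replace the full automaton by the sub-automaton generated by a single final SCC — this is exactly where the hypothesis that primitive prolongable automatic sequences have densities along $(n_\ell)$ gets used: the restriction of the automaton to a final SCC $C$, with an appropriate choice of initial state and after normalizing the $0$-loop, is precisely a primitive prolongable automatic sequence $\tilde a_C$.

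Next I would set up the decomposition
\begin{align}\label{eq:decomp-plan}
\ind_{[a(n_\ell)=\alpha]} = \sum_{C \text{ final SCC}} \ind_{[\text{run on }n_\ell\text{ ends in }C]} \, \ind_{[\tilde a_C(\pi_C(n_\ell))=\alpha]},
\end{align}
where $\pi_C(n_\ell)$ is the integer obtained by stripping off the prefix of digits consumed before entering $C$ (together with the state at which $C$ is entered). The regular-variation hypothesis \eqref{eq_gamma}–\eqref{eq_lim_L} on $(n_\ell)$ is what allows the ``stripping'' to interact cleanly with densities: removing a bounded-length prefix of digits corresponds to looking at the subsequence along a dilated/translated copy of $(n_\ell)$, and slow variation guarantees that such operations do not change the relevant densities, while the scale on which the leading digits are distributed is governed by $\log$. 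The heart of part (i) is then a Tauberian/averaging argument: when we average $\ind_{[a(n_\ell)=\alpha]}$ with the logarithmic weight $1/\ell$ over $\ell\le x$, the leading digits of $n_\ell$ — which determine through which non-final SCCs the run passes and hence \emph{which} final SCC $C$ is reached and at which entry state — become equidistributed on the logarithmic scale (this is the classical Cobham phenomenon, \cite{Cobham1972}), so the logarithmic average of the indicator that the run ends in $C$ at a given entry configuration converges to an explicit, computable weight $w_C$; meanwhile, by hypothesis, the inner factor $\ind_{[\tilde a_C(\pi_C(n_\ell))=\alpha]}$ has an ordinary density $d(\tilde a_C(\cdot),\alpha)$, and ordinary convergence is preserved under logarithmic averaging. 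Combining, $d_{\log}(a(n_\ell),\alpha)=\sum_C w_C\, d(\tilde a_C(\cdot),\alpha)$, which is computable.

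For part (ii), if all the final-SCC densities $d(\tilde a_C(\cdot),\alpha)$ coincide with a common value $d_\alpha$, then in \eqref{eq:decomp-plan} the inner factor can be replaced (up to an $o(1)$ error coming from the density convergence, uniformly after splitting the $\ell$-range dyadically) by the constant $d_\alpha$, and the remaining sum $\sum_C \ind_{[\text{run ends in }C]}$ is identically $1$ once $n_\ell$ is large enough to have entered some final SCC; hence the ordinary average of $\ind_{[a(n_\ell)=\alpha]}$ over $\ell\le x$ converges to $d_\alpha$, i.e.\ the ordinary density exists and equals the common final-SCC density. The main obstacle I anticipate is the uniformity in the decomposition \eqref{eq:decomp-plan}: one must control, simultaneously over all entry configurations into final SCCs, both the rate at which the ``transient'' part of the run terminates (so that for all but a negligible proportion of $\ell\le x$ the run has already reached a final SCC) and the speed of convergence of the inner densities, and one must verify that the prefix-stripping map $n_\ell\mapsto \pi_C(n_\ell)$ sends the regularly varying sequence to something along which the primitive-prolongable density hypothesis is still applicable — this requires the results of Section~\ref{sec_n_l} on how regularly varying sequences behave under the digit operations, which is the technical crux.
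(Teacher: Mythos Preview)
Your overall architecture---decompose via final SCCs, show the ``which SCC'' information is governed by leading digits and hence by logarithmic weights, and reduce the inner behaviour to the primitive prolongable case---is the right picture, and it matches the paper's strategy. But there is a genuine gap in the mechanism you propose for the reduction, namely the prefix-stripping map $\pi_C$.

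The hypothesis you are given says only that for primitive prolongable $\tilde a$, the density of $\tilde a(n_\ell)$ along the \emph{original} sequence $(n_\ell)$ exists. Your decomposition requires instead the density of $\tilde a_C$ along $(\pi_C(n_\ell))$, and $(\pi_C(n_\ell))$ is not $(n_\ell)$, nor a regularly varying sequence in its own right, nor anything to which the hypothesis applies. You write that ``removing a bounded-length prefix'' is harmless under regular variation, but the prefix length here---the number of digits read before the run first enters a final SCC---is not bounded; and even if you truncate at a fixed length $\lambda$ and treat the remainder as error, stripping $\lambda$ leading digits from $n_\ell$ produces numbers $n_\ell - [w]_k k^{r(\ell)-\lambda}$ whose relationship to $(n_\ell)$ is not a dilation or translation of the index. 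Moreover, the entry state into $C$ need not be fixed by the $0$-transition, so the restricted automaton started there is not prolongable. You correctly flag this step as ``the technical crux'', but it is not merely technical: as stated, it does not go through.

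The paper's key move (its Proposition on decomposition) sidesteps the problem entirely: rather than stripping, it constructs for each final component $F_i$ a small family of primitive \emph{and prolongable} sequences $b_{i,j}$---obtained by choosing initial states $q_0^{(i,j)}\in F_i$ that are fixed by $\delta(\cdot,0)$---with the property that for $m$ in an explicit automatic set $M_{i,j}$ one has $a(n)=b_{i,j}(n)$ for \emph{every} $n\in [mk^\lambda,(m+1)k^\lambda)$, the same $n$, no reparametrisation. One can then apply the density hypothesis to $b_{i,j}$ along $(n_\ell)$ directly on each such block, convert to logarithmic sums via Abel summation, and sum over $m$ weighted by the (existing) logarithmic densities $d_{\log}(M_{i,j})$; the exceptional set $M_0$ has Banach density zero and is negligible. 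The resulting formula is $d_{\log}(a(n_\ell),\alpha)=\sum_{i,j} d_{\log}(M_{i,j})\, d(b_{i,j}(n_\ell),\alpha)$, which is exactly the weighted sum you were aiming for---but reached without ever leaving the sequence $(n_\ell)$.
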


This theorem will be now applied to primes and squares.
We start with primes and note that it was already shown in \cite{Muellner2017} that primitive and prolongable
automatic sequences along the primes have densities that are all computable rational numbers.
Together with Theorem~\ref{Th1} this solves the problem for primes completely.

\begin{theorem}\label{Th2}
For every automatic sequence $a(n)$ the logarithmic densities $d_{\log }(a(p_n),\alpha)$ of the subsequence 
along prime numbers exist and are computable. 
Furthermore, if the densities along primes on those automatic sequences that correspond to the
final strongly connected components coincide
then the densities $d(a(p_n),\alpha)$ exist and are computable rational numbers.
\end{theorem}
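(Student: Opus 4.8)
The plan is to obtain Theorem~\ref{Th2} as a direct application of Theorem~\ref{Th1}: since Theorem~\ref{Th1} already carries out all of the combinatorial transfer work, it suffices to verify that the subsequence $(p_n)_{n\ge 1}$ of prime numbers satisfies its hypotheses, and then to invoke the prime number theorem for automatic sequences recalled just above the statement.

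\textbf{Step 1 (the primes form a regularly varying sequence).} By the prime number theorem $p_n \sim n\log n$, so writing $p_n = n^\gamma L(n)$ with $\gamma = 1$ and $L(n) := p_n/n$ we have $L(n)\sim\log n$. Since $\log(\ceil{\delta n})/\log n = 1 + O(1/\log n) \to 1$ for every fixed $\delta\in(0,1)$, it follows that $L(\ceil{\delta n})/L(n)\to 1$ as $n\to\infty$, i.e.\ \eqref{eq_lim_L} holds; hence $(p_n)$ is a regularly varying sequence in the sense of \eqref{eq_gamma}. (The finer expansion $p_n = n(\log n + \log\log n - 1 + o(1))$ makes this even more transparent.)

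\textbf{Step 2 (verifying the hypothesis of Theorem~\ref{Th1}).} By the prime number theorem for automatic sequences established in~\cite{Muellner2017} (recalled immediately before the statement of Theorem~\ref{Th2}), for every primitive and prolongable automatic sequence $\tilde a(n)$ and every letter $\alpha$ there is a computable $c_\alpha\in\Q$ with
\[
\card\{\,p\le y : p\in\P,\ \tilde a(p) = \alpha\,\} = c_\alpha\,\frac{y}{\log y}\,\rb{1+o(1)} \qquad (y\to\infty).
\]
Since $\pi(y)\sim y/\log y$, and since with $y = p_x$ the number of primes not exceeding $p_x$ equals $x$, a change of variables gives
\[
d(\tilde a(p_n),\alpha) = \lim_{x\to\infty}\frac{\card\{\ell\le x : \tilde a(p_\ell) = \alpha\}}{x} = \lim_{y\to\infty}\frac{\card\{p\le y : p\in\P,\ \tilde a(p) = \alpha\}}{\pi(y)} = c_\alpha .
\]
Thus the densities $d(\tilde a(p_n),\alpha)$ exist --- and equal the computable rational numbers $c_\alpha$ --- for all primitive and prolongable $\tilde a$, which is exactly the assumption needed to apply Theorem~\ref{Th1}.

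\textbf{Step 3 (conclusion).} Part~(i) of Theorem~\ref{Th1}, applied to the regularly varying sequence $(p_n)$, shows that for every automatic sequence $a(n)$ the logarithmic density $d_{\log}(a(p_n),\alpha)$ exists and is explicitly computable: the minimal automaton of $a$, its decomposition into strongly connected components, and the final components among them are all algorithmically determined, and the constants $c_\alpha$ are computable by~\cite{Muellner2017}. Part~(ii) shows that whenever the prime-densities $d(\tilde a(p_n),\alpha)$ of the automatic sequences generated by the final strongly connected components of this automaton all coincide, the density $d(a(p_n),\alpha)$ exists and equals that common value, which by Step~2 is a computable rational number. The only point requiring a little care is the translation between the two normalizations of ``density along the primes'' appearing in Step~2 --- relative to $\pi(y)$ versus relative to $x$ --- but this is an immediate consequence of the prime number theorem, and it is essentially the only nontrivial step, since all the substantive analytic work is contained in~\cite{Muellner2017} and all the substantive combinatorial transfer in Theorem~\ref{Th1}.
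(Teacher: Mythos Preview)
Your proof is correct and matches the paper's own approach essentially line for line: the paper also derives Theorem~\ref{Th2} by noting that $(p_n)$ is regularly varying, invoking the result of~\cite{Muellner2017} that primitive and prolongable automatic sequences have computable rational densities along primes, and then applying parts~(i) and~(ii) of Theorem~\ref{Th1}. The only difference is that the paper (in Section~\ref{sec:primes}) additionally \emph{reproves} the needed input from~\cite{Muellner2017} within its own framework via Theorem~\ref{th_density_primitive}, but this is supplementary and not required for the deduction of Theorem~\ref{Th2}.
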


The same result hold for subsequences along squares. However, in this case we have to check 
the assumption on primitive and prolongable automatic sequences, see Section~\ref{sec:squares}. 
In both cases of primes and squares we are able to compute the densities for primitive and prolongable automatic sequence.
As an example, we compute the densities of the paper-folding sequence along primes and squares in Section~\ref{sec:computability}. 
The densities of $0$ and $1$ in the paper-folding sequence along primes are both $1/2$, whereas the density of $1$ in 
the paper-folding sequence along squares is $1$.

\begin{theorem}\label{th_squares_log}
For every automatic sequence $a(n)$ the logarithmic densities $d_{\log }(a(n^2),\alpha)$ of the 
subsequence along squares exist and are computable. 
Furthermore, if the densities along squares on those automatic sequences that correspond to the 
final strongly connected components coincide then the densities $d(a(n^2),\alpha)$ exist and are also computable. 
If the input base $k$ is prime, then these densities are rational numbers.
\end{theorem}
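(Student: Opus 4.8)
The plan is to reduce the statement to the hypothesis of Theorem~\ref{Th1}, applied to the regularly varying sequence $(n^2)_{n\ge 0}$ (here $\gamma = 2$ and $L \equiv 1$, so \eqref{eq_lim_L} is trivially satisfied). By Theorem~\ref{Th1}, once we know that for every \emph{primitive and prolongable} automatic sequence $\tilde a(n)$ the density $d(\tilde a(n^2),\alpha) = \lim_{x\to\infty} x^{-1}\#\{\ell \le x : \tilde a(\ell^2) = \alpha\}$ exists and is computable, both the existence and computability of the logarithmic densities $d_{\log}(a(n^2),\alpha)$ for arbitrary automatic $a$, and the criterion for the ordinary densities in terms of the final strongly connected components, follow immediately. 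So the entire burden is to prove: for every primitive and prolongable $k$-automatic sequence $\tilde a$, the density along squares exists and is computable, and is rational when $k$ is prime.

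The key tool here is the structure theory of primitive automatic sequences, which expresses such a sequence (after the standard analysis via the associated substitution and its ``invertible part'') in terms of a nilpotent/compact group rotation twisted by a simpler automatic component; this is precisely the machinery of \cite{Muellner2017, Muellner2018} that was used for the prime case. Concretely, I would first reduce to establishing, for each residue $\alpha$, that the averages $\frac1x\sum_{\ell\le x}\mathbf 1_{[\tilde a(\ell^2)=\alpha]}$ converge. Writing $\tilde a(n)$ via the automaton, this amounts to understanding the joint distribution, as $\ell$ ranges over $[1,x]$, of the sequence of base-$k$ digits of $\ell^2$ together with the relevant ``carry'' / group-valued data. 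The engine for this is a Weyl-type / van der Corput exponential-sum estimate for $e(\vartheta\, s_k(\ell^2) )$-type sums and, more generally, the Mauduit–Rivat method for automatic sequences along squares \cite{Mauduit2009}, together with its extension in \cite{Muellner2017} (and \cite{Muellner2018} for the block-additive case) from the Thue–Morse setting to arbitrary automatic sequences: these give that a primitive automatic sequence along squares is either asymptotically equidistributed according to a computable limiting measure, or—when the invertible part is genuinely periodic—distributes along the residue classes of $\ell \bmod m$ for a computable modulus $m$. In either case the Cesàro limit exists and is computable.

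The main obstacle is the computability and—when $k$ is prime—the rationality claim. Unlike the prime case, where the relevant equidistribution statement forces the limiting density into $\tfrac1{\varphi(\cdot)}$-type rational values, along squares the limiting measure can involve the Haar measure of a compact group arising from the ``nilpotent part'' of the automaton, and a priori this need not be rational. The point to exploit is that when $k$ is prime, the group in question is a $p$-group (a product of cyclic $p$-groups coming from the $k$-adic structure), its Haar measure assigns rational mass to the relevant clopen sets, and the quadratic Gauss-sum cancellation that drives the Mauduit–Rivat estimate is ``complete'' in a way that leaves only these rational contributions; for composite $k$ one only gets computability. I would therefore (i) isolate the final-component/primitive case and invoke the $(n^2)$-analogue of the main theorem of \cite{Muellner2017}, (ii) run the Mauduit–Rivat square-digit analysis to produce the explicit limiting measure, tracking its dependence on $k$, and (iii) verify rationality when $k$ is prime by a Gauss-sum computation, before (iv) feeding the result into Theorem~\ref{Th1} to conclude.
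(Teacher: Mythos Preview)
Your overall reduction via Theorem~\ref{Th1} to the primitive and prolongable case is exactly right, and matches the paper. But there are two genuine gaps in the subsequent argument.

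First, the exponential-sum input you need does not exist in the literature in the form you cite. The references \cite{Muellner2017} and \cite{Muellner2018} deal with primes (respectively block-additive functions), not with general automatic sequences along squares. The relevant Mauduit--Rivat paper for squares is \cite{Mauduit2018}, but it only treats scalar-valued $f$ with the \emph{strong} carry property ($\eta=1$). To feed $D(T(n))$ into the machine for an arbitrary irreducible representation $D$ of the group $G$ attached to the automaton, one needs a matrix-valued version with the \emph{weak} carry property ($\eta>0$); the paper establishes precisely this as Theorem~\ref{th_squares_general}, redoing the van der Corput and Fourier arguments of \cite{Mauduit2018} while tracking the non-commutativity and the parameter~$\eta$. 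You also need to verify the ``regular distribution'' hypothesis \eqref{eq_not_sparse} for squares (squares do not cluster on non-synchronizing residues), which is a separate lemma based on Hensel lifting and is not automatic.

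Second, your rationality argument for prime $k$ is incorrect. The group $G$ is the permutation group generated by the automaton on the minimizing sets; it need not be a $p$-group, and Gauss sums play no role. The invertible part contributes $d(T(n^2),g)=\tfrac{d}{|G|}\,c(j;d)\in\Q$ for \emph{every} $k$, so the issue lies entirely in the synchronizing part $d(s(n^2),q)=\lim_\lambda\sum_{m<k^\lambda}\ind_{[s(m)=q]}\,c(m;k^\lambda)$. When $k$ is prime, Hensel's lemma gives an explicit description of $c(m;k^{2\lambda+1})$: writing $m=m'k^{2\mu+1}+m_0'k^{2\mu}$ with $m_0'\neq 0$, one gets $c(m;k^{2\lambda+1})=2k^{-\mu-1}\cdot k^{-(2\lambda-2\mu)}$ if $m_0'$ is a quadratic residue mod $k$ and $0$ otherwise. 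Substituting and separating variables, $d(s(n^2),q)$ becomes a finite combination of the (rational) densities $d(s(n),q_1)$ and the series $\sum_{\mu\ge 0}k^{-\mu}\ind_{[\delta(q_2,0^{2\mu})=q]}$, which is rational because $\delta(q_2,0^{2\mu})$ is eventually periodic in $\mu$. This is the mechanism that produces rationality, not Haar measure on a $p$-group.
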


As a simple application, we can compute the logarithmic densities of the leading digit of primes and polynomials $P(n)$ with integer coefficients,
\begin{align*}
	d_{\log }(a(p_n),\alpha) = d_{\log}(a(P(n)), \alpha) = \frac{\log (1+1/\alpha)}{\log k}.
\end{align*}

Theorems~\ref{Th2} and \ref{th_squares_log} suggest that the subsequences of primes and squares are
{\it similar}, at least for the question of the existence of (logarithmic) densities of automatic sequences along these
subsequences. As we will see in Section~\ref{sec:primes} and Section~\ref{sec:squares}, they share several distribution properties. However, it seems that
there are still {\it fundamental differences}. For example in the prime case there is the following quite 
unexpected property.
	\begin{theorem}\label{th_primes_mod_m}
		For any automatic sequence $a(n)$ there exists a computable positive integer $m$ such that, for all $\alpha$, 
		$d_{\log}(a(p_n), \alpha)$ is equal to the logarithmic density of $a(n)$ along the integers $n$  
satisfying $(n, m) = 1$.
	\end{theorem}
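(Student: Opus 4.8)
The plan is to reduce to primitive and prolongable automatic sequences via Theorem~\ref{Th1}, and then exploit the structure of the automaton acting along the primes. First I would recall from \cite{Muellner2017} that for a primitive and prolongable automatic sequence $\tilde a(n)$ generated by a strongly connected minimal automaton in base $k$, the distribution of $\tilde a(p_n)$ is governed by the action of the base-$k$ digit-reading transition monoid together with the behavior of primes in arithmetic progressions; the key phenomenon is that, after restricting to the (primitive) final component, the values along primes only ``see'' $n$ through a bounded amount of $k$-adic information together with a congruence condition. Concretely, the structure theorem for automatic sequences expresses $\tilde a(n)$ through an invertible (permutation) part and a ``synchronizing'' part, and along primes $p_n$ only the invertible part survives asymptotically. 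The invertible part is driven by a finite group $G$ acting on the states, and the relevant statistic is the image of $p$ in $G$ under reading the $k$-ary digits, which — because $G$ is a quotient of a group whose order is built from $k$ and from the period of the component — depends only on $p$ modulo some integer $m_0$ coprime to (or with controlled common factor with) $k$.

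The key steps, in order, are: (1) Apply Theorem~\ref{Th1} to reduce the computation of $d_{\log}(a(p_n),\alpha)$ to the densities $d(\tilde a(p_n),\alpha)$ over the primitive and prolongable automatic sequences attached to the final strongly connected components; note Theorem~\ref{Th1}(i) already guarantees the logarithmic density exists and is an explicit combination of these. (2) For a fixed primitive and prolongable $\tilde a$, invoke the analysis of \cite{Muellner2017} to show that $d(\tilde a(p_n),\alpha)$ equals a weighted count over residue classes: there is an integer $m_{\tilde a}$ (a divisor of something like $k^{t}(k^{\lambda}-1)$ for suitable $t,\lambda$ tied to the automaton) such that $d(\tilde a(p_n),\alpha) = \sum_{\substack{r \bmod m_{\tilde a}\\ (r,m_{\tilde a})=1}} \tfrac{1}{\varphi(m_{\tilde a})}\,\ind_{[\tilde a \text{ stabilizes to } \alpha \text{ on residue } r]}$, by Dirichlet's theorem on primes in progressions. (3) Observe that the right-hand side is precisely the logarithmic density of $\tilde a(n)$ restricted to integers $n$ with $(n,m_{\tilde a})=1$: indeed on such $n$ the same group element governs the asymptotic value, and the natural (equivalently logarithmic) density of the residue class $r \bmod m_{\tilde a}$ among $n$ coprime to $m_{\tilde a}$ is $1/\varphi(m_{\tilde a})$. (4) Take $m=\lcm$ of the $m_{\tilde a}$ over all final components (and incorporate the transient behavior, which contributes nothing to either density); unwinding Theorem~\ref{Th1}(i) then shows $d_{\log}(a(p_n),\alpha)$ equals the logarithmic density of $a(n)$ along $(n,m)=1$, since the transfer in Theorem~\ref{Th1} is ``digit-position by digit-position'' and commutes with passing from a final-component statistic to the same statistic restricted to $n$ coprime to $m$.

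The main obstacle I expect is step (3)–(4): matching, on the nose, the \emph{prime} statistic with the \emph{logarithmic-density-over-$(n,m)=1$} statistic, rather than merely showing both are computable. The subtlety is that along primes the relevant group element is ``$p \bmod m$ read multiplicatively into $G$'' while along all $n$ coprime to $m$ it is ``$n \bmod m$ read into $G$'', and one must check these induce the same pushforward measure on $\alpha$-values — this works because the map from $(\Z/m\Z)^{\times}$ to $G$ is a group homomorphism, so uniform distribution of primes over $(\Z/m\Z)^{\times}$ (Dirichlet) and uniform distribution of coprime residues over $(\Z/m\Z)^{\times}$ (trivial) push forward to the same distribution on $G$, hence on $\alpha$. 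One also has to be careful that the synchronizing (non-invertible) part of the automaton, which does contribute to the density along general $n$, contributes a \emph{null} set both along primes and along $n$ coprime to $m$; this is where the primitivity assumption and the fact that primes and ``coprime-to-$m$ integers'' both avoid the short synchronizing prefixes with the right frequency is used, and is the technical heart to get the equality exact rather than approximate. Finally, verifying that $m$ is \emph{computable} amounts to checking that each $m_{\tilde a}$ is read off effectively from the minimal automaton and the transition monoid of the final components, which is routine given the structure theory cited.
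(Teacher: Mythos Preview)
Your overall architecture is right --- reduce via Theorem~\ref{Th1} to primitive and prolongable pieces $b_i$, show $d(b_i(p_n),\alpha)=d(b_i(n_\ell),\alpha)$ for the coprime-to-$m$ sequence, take an lcm --- and this is exactly what the paper does. But your mechanism for step~(3) is wrong, and the gap matters.

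You claim the matching works because ``the map from $(\Z/m\Z)^{\times}$ to $G$ is a group homomorphism'', so Dirichlet equidistribution and trivial equidistribution push forward to the same measure. This is not correct: $T(n)$ depends on \emph{all} digits of $n$, not on $n\bmod m$; only the coset $T(n)G_0\in G/G_0\cong\Z/d\Z$ is periodic in $n$. Relatedly, the synchronizing part $s(n)$ does \emph{not} contribute a null set along either sequence --- it contributes genuinely through $d(s(n_\ell),q)$, and you need those contributions to match as well.

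The paper's actual argument is to apply Theorem~\ref{th_density_primitive} \emph{twice}, once for $n_\ell=p_\ell$ and once for $n_\ell$ running over integers coprime to $m=k\cdot\prod_i d_i$. The formula in Theorem~\ref{th_density_primitive} shows that $d(b_i(n_\ell),\alpha)$ depends on the sequence $(n_\ell)$ only through the values $c(r;k^{\lambda})$ and $c(r;d_i)$, and a direct computation gives $c_{\P}(r;k^{\lambda})=c_{n_\ell}(r;k^{\lambda})=\ind_{[(r,k)=1]}/(\varphi(k)k^{\lambda-1})$ and $c_{\P}(r;d_i)=c_{n_\ell}(r;d_i)=\ind_{[(r,d_i)=1]}/\varphi(d_i)$. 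This handles both the synchronizing and the invertible parts simultaneously. The point you are missing is that to invoke Theorem~\ref{th_density_primitive} for the coprime-to-$m$ sequence you must verify hypothesis~\eqref{eq_D_not_D_j} for that sequence, and this is where Proposition~\ref{pr_fourier} (the Fourier property of $D(T(\cdot))$) is used --- one detects the condition $(n,m)=1$ by additive characters and then applies the uniform exponential-sum bound. No group-homomorphism reasoning enters.
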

\begin{remark}
This theorem also works for densities in the sense that if the density exists for one of them, then it also exists for the other one and they coincide.
\end{remark}
\begin{remark}
This theorem applies for example to the residue of any block-additive function $f \bmod m$ satisfying $(k-1, m) = 1$ and $(\gcd(f(n)_{n\in \N}), m) = 1$, as this sequence distributes uniformly along any arithmetic progression, which follows from~\cite[Proposition 3.15]{Muellner2018}. However, this result was already put as a remark in~\cite{Muellner2017}, without a proof. 
\end{remark}

We could not find a corresponding property for squares. We expect that the {\it deeper reason} for this difference is that
primes have a quasi-random behavior that is not present for squares. We leave it as an open problem to clarify this phenomenon.

We would like to thank the anonymous referee, who suggested to also consider the upper and lower densities of subsequences of automatic sequences similarly to~\cite{Bell2020}.
Indeed, very similar methods can be used to treat slowly varying subsequences of automatic sequences.
We sketch a proof of the corresponding results, that is, Theorem~\ref{th_upper_density} and Corollary~\ref{co_upper_primes}, in Appendix~\ref{sec_upper}.

To end this introduction, let us mention two possible directions for further research.  
First, we recall an analogous problem for Piatetski-Shapiro sequences $\floor{n^c}$, with 
$1 < c < 7/5$, which has already been solved in  \cite{Deshouillers2012}.  Indeed, these authors proved that, 
for every automatic sequence $a(n)$ and for every $c\in (1,7/5)$, the logarithmic densities 
$d_{\log}(a(\lfloor n^c\rfloor), \alpha)$ exist and are equal to the logarithmic densities of $a(n)$. 
Furthermore, the densities $d(a(\floor{n^c}),\alpha)$
exist if and only if the densities $d(a(n),\alpha)$ exist, in which case they are equal. 
We conjecture that such a result should also hold for all $1<c < 2$.

\begin{conjecture}
For every automatic sequence $a(n)$ and for every $c\in (1,2)$, the logarithmic densities 
$d_{\log}(a(\lfloor n^c\rfloor), \alpha)$ exist and are equal to the logarithmic densities $d_{\log}(a(n), \alpha)$. 
Furthermore, the densities $d(a(\floor{n^c}),\alpha)$
exist if and only if the densities $d(a(n),\alpha)$ exist, in which case they are equal. 
\end{conjecture}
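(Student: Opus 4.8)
We close with a sketch of a possible route towards this conjecture, together with the obstruction that currently blocks it. The plan is to follow the strategy of \cite{Deshouillers2012}, which settles the range $1<c<7/5$, combining it with Theorem~\ref{Th1} and isolating the one analytic estimate that caps the admissible range of exponents. Since $n_\ell=\floor{\ell^c}$ is regularly varying with $\gamma=c$, it suffices, by Theorem~\ref{Th1}, to prove that for every primitive and prolongable automatic sequence $\tilde a(n)$ and every $c\in(1,2)$ the density
\[
d(\tilde a(\floor{\ell^c}),\alpha)=\lim_{x\to\infty}\frac1x\#\{\ell\le x:\tilde a(\floor{\ell^c})=\alpha\}
\]
exists and (to identify the limits) equals the natural density $d(\tilde a(n),\alpha)$, which exists because such an automaton is a single strongly connected component (Perron--Frobenius, cf.\ \cite{Cobham1972}). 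Granting this, Theorem~\ref{Th1}(i) gives existence and computability of $d_{\log}(a(\floor{\ell^c}),\alpha)$ for every automatic sequence $a$; the explicit formula there depends on the subsequence only through the final-component densities along $(\floor{\ell^c})$ — equal to those along $\N$ by the claim — and through the exponent $\gamma=c$, which enters only via the rescaling $\log\floor{\ell^c}=c\log\ell+o(\log\ell)$ and so cancels in the normalized sum; hence $d_{\log}(a(\floor{\ell^c}),\alpha)=d_{\log}(a(n),\alpha)$, by the same computation as in \cite{Deshouillers2012}. The same remark shows the final-component densities along $(\floor{\ell^c})$ all coincide exactly when those along $\N$ do, so by Theorem~\ref{Th1}(ii) and the Cobham-type dichotomy (non-coincidence forces genuine oscillation of the counting function along both $\N$ and $(\floor{\ell^c})$, through the leading-digit mechanism) the densities $d(a(\floor{\ell^c}),\alpha)$ and $d(a(n),\alpha)$ exist under the same hypothesis and are then equal.

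It remains to prove the primitive claim. Using that for $c>1$ each integer $m$ occurs as $\floor{\ell^c}$ with multiplicity $\floor{(m+1)^{1/c}}-\floor{m^{1/c}}$, and writing $\{t\}=t-\floor t$, one has
\[
\#\{\ell\le x:\tilde a(\floor{\ell^c})=\alpha\}=\sum_{m\le x^c}\ind_{[\tilde a(m)=\alpha]}\rb{(m+1)^{1/c}-m^{1/c}+\{m^{1/c}\}-\{(m+1)^{1/c}\}}+O(1).
\]
The smooth part $(m+1)^{1/c}-m^{1/c}=\tfrac1c m^{1/c-1}+O(m^{1/c-2})$, together with the known asymptotic $\sum_{m\le M}\ind_{[\tilde a(m)=\alpha]}=d(\tilde a(n),\alpha)\,M+o(M)$, contributes after partial summation the main term $d(\tilde a(n),\alpha)\,x+o(x)$. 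So it suffices to show the fractional-part contribution is $o(x)$; approximating $\{t\}$ by a Fourier series truncated at height $H$ (Vaaler) and then optimizing $H$, this follows from a Davenport-type bound
\[
\sum_{m\le M}\ind_{[\tilde a(m)=\alpha]}\,e\!\rb{\beta\,m^{\vartheta}}=o(M),\qquad\vartheta=\tfrac1c\in(\tfrac12,1),
\]
uniform for $\beta$ in a polynomially bounded range.

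To establish this exponential-sum bound one feeds $\ind_{[\tilde a(m)=\alpha]}$ through the structural decomposition of \cite{Muellner2017}: truncating the automaton's action to the leading digits (at an admissible carry-propagation cost) turns it into a bounded digit-block function, to which one applies the van der Corput inequality in $m$ exactly as in the Mauduit--Rivat analysis of the digit sum of $\floor{n^c}$ and of $n^2$ \cite{Mauduit2009,Mauduit2010}; the invertible part of the automaton then yields Weyl-type sums $\sum_{n\sim N}e\!\rb{\beta n^{\vartheta}+\text{linear digit phases}}$ handled by exponent pairs, while the non-invertible part has strictly smaller effective support and is absorbed. The range of $c$ is decided precisely here: in the current state of the art the van der Corput/exponent-pair iteration delivers enough cancellation in $\sum_{n\sim N}e(\beta n^{\vartheta})$ to beat the carry error only when $\vartheta>5/7$, i.e.\ $c<7/5$. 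Reaching all $c<2$, that is $\vartheta$ down to $\tfrac12$, requires genuinely stronger exponential-sum input — of Bombieri--Iwaniec or Robert--Sargos strength, or progress on the relevant exponent-pair conjectures — and this is the main obstacle; the endpoint $\vartheta=\tfrac12$ ($c=2$) is the qualitatively different regime handled in Theorem~\ref{th_squares_log}. I do not expect the gap to close without new analytic estimates, which is why the statement is recorded only as a conjecture.
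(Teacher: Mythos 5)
This statement is recorded in the paper as a \emph{conjecture}: the paper gives no proof of it, only the remark that the range $1<c<7/5$ is settled in \cite{Deshouillers2012}. Your submission correctly treats it as open --- you do not claim a proof, you outline a strategy and isolate the obstruction --- so there is no flawed argument to point to, but there is also no proof, and none was to be expected. The one genuine ``gap'' is exactly the one you name yourself: the Davenport-type bound $\sum_{m\le M}\ind_{[\tilde a(m)=\alpha]}\e(\beta m^{1/c})=o(M)$ for primitive prolongable $\tilde a$ and all $1/c\in(1/2,1)$ is not available with current exponential-sum technology, and without it the reduction cannot be completed.

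On the strategy itself, your reduction is consistent with the paper's framework: Theorem~\ref{Th1} is designed precisely to reduce the problem to primitive and prolongable sequences, and $(\floor{\ell^c})$ is regularly varying with $\gamma=c$. Two small points of care. First, to conclude $d_{\log}(a(\floor{\ell^c}),\alpha)=d_{\log}(a(n),\alpha)$ you must apply formula \eqref{eq_d_log} to \emph{both} subsequences $(\floor{\ell^c})$ and $(\ell)$ and compare term by term; the exponent $\gamma$ does cancel in \eqref{eq_d_log}, but the equality ultimately rests on the unproved claim $d(b_i(\floor{\ell^c}),\alpha)=d(b_i(n),\alpha)$ for each primitive piece, not on a rescaling of logarithms alone. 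Second, the ``if and only if'' in the second half of the conjecture needs the converse direction of Theorem~\ref{Th1}(ii), which is stated only as an implication; the necessity of the coincidence of the $d(b_i(n_\ell),\alpha)$ is established inside the proof via \eqref{eq_density_interval}, so your appeal to a ``leading-digit mechanism'' should be replaced by a citation of that step. With those caveats, your sketch is a fair account of why the statement is plausible, why it is known for $c<7/5$, and why it remains conjectural for $7/5\le c<2$.
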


A second possible direction of research is related to \emph{morphic sequences}.  
As we have already mentioned, automatic sequences are codings of fixed points of morphisms of constant length.
Thus, they are a special case of morphic sequences, which can be obtained as codings of fixed points of general morphisms.
It is already known that the logarithmic densites of morphic sequences exist~\cite{Bell2008} and it seems reasonable to assume that Theorem~\ref{Th1} could be extended to 
morphic sequences. However, it seems much more challenging to prove the analogs of Theorems~\ref{Th2} and \ref{th_squares_log} in this wider framework.

\subsection{Plan of the paper}  
We start with a short section on regularly varying functions (Section~\ref{sec_n_l}) and proceed
with a longer background section on properties of automatic sequences (Section~\ref{sec:automatic}).
In particular we discuss (partly new) structural results that will be needed for the proof of
Theorem~\ref{Th1} that will be given in Section~\ref{sec:transfer}.
In Section~\ref{sec_primitive} we present a strategy how one can check that densities for primitive and prolongable automatic sequences
exist so that Theorem~\ref{Th1} can be applied.
Section~\ref{sec:primes} is then devoted to the case of prime numbers (Theorem~\ref{Th2}) and Section~\ref{sec:squares} to
the case of squares (Theorem~\ref{th_squares_log}). 
Finally, Section~\ref{sec:computability} is devoted to the problem, how densities along primes and squares 
can be actually computed (including some examples). 
In Appendix \ref{sec:dynamical}, we gather some implications to dynamical systems. In particular, we deduce  
from a recent result of the third author and Lema\'nczyk that all subshifts  generated by  automatic sequences  are orthogonal to any bounded multiplicative aperiodic function (Corollary \ref{cor:ortho}). 
Finally, Appendix~\ref{sec_upper} is devoted to a sketch for how one can treat upper and lower densities of automatic sequences along slowly varying subsequences.

  \subsection{Notation}
  In this paper we let $\N$ denote the set of positive integers and we use the abbreviation 
  $\e(x) = \exp(2\pi ix)$ for any real
number $x$.

For two functions, $f: \R \to \R$ and $g: \R \to \R_{>0}$ such that $f /g$ is
bounded, we write $f = O(g)$ or $f \ll g$.
If in fact $\abs{f(x)} \leq g(x)$ for all $x$, we write $f = O^{*}(g)$.
Furthermore, we write $f = o(g)$ if $\lim_{x \to \infty} f(x)/g(x) = 0$. 
We also write $f \sim g$ if $\lim_{x \to \infty} f(x) / g(x) = 1$.

We let $\floor{x}$ denote the floor function and $\ceil{x}$ denote the ceiling function.

Moreover we let $\varphi(n)$ denote the Euler totient function.
Finally, we let $\P$ denote the set of prime numbers and by $\pi(x)$ the number of prime numbers smaller than or equal to $x$.
  
  \section{Regularly varying functions}\label{sec_n_l}
  We discuss in this section some properties of subsequences $(n_{\ell})_{\ell \in \N}$ satisfying \eqref{eq_gamma} and \eqref{eq_lim_L} for some $\gamma \geq 1$ and $L: \N \to \R$.
  We define a new function $f: \R_{\geq 1} \to \R_{>0}, f(x) = n_{\ceil{x}}$.
  It follows directly that $f$ is measurable. Furthermore, we have for any $\delta>0$ that
  \begin{align*}
  	\lim_{x \to \infty} \frac{f(\delta x)}{f(x)} = \delta^{\gamma},
  \end{align*}
  i.e. it is regularly varying of index $\gamma$ (see \cite{Bingham1987} for background on regularly varying functions).
  If $\gamma = 1$, $f$ is called slowly varying.
  
  We consider the generalized inverse function of $f$, $g(x) := \inf \{y \in [1,\infty): f(y) > x\}$.
  In particular, we have $g(N) = \#\{\ell \in \N: n_{\ell} \leq N\}$.
  One has by \cite[Theorem 1.5.12]{Bingham1987} that $g$ is regularly varying of index $1/\gamma$, i.e. for every $\delta > 0$
  \begin{align*}
  	\lim_{N \to \infty}\frac{g(\delta N)}{g(N)} = \delta^{\beta},
  \end{align*}
  where we set $\beta := 1/\gamma$.
  	\begin{lemma}\label{le_log_g}
  		With the notation from above, we have
  		\begin{align*}
  			\log(g(N)) \sim \beta \log N.
  		\end{align*}
  	\end{lemma}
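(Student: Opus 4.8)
The plan is to extract $\log g(N) \sim \beta \log N$ directly from the regular variation of $g$ established in the preceding lines, since the statement is a soft consequence of $g$ being regularly varying of index $\beta$. First I would recall the representation theorem (Karamata): any function regularly varying of index $\beta$ can be written as $g(N) = N^{\beta} \ell(N)$ where $\ell$ is slowly varying, and Karamata's theorem gives $\log \ell(N) = o(\log N)$. Taking logarithms yields $\log g(N) = \beta \log N + \log \ell(N) = \beta \log N + o(\log N)$, which is exactly the claim. This is essentially a citation to \cite{Bingham1987} together with one line of arithmetic.

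Alternatively, and perhaps more in the spirit of keeping the argument self-contained, I would argue directly from the limit relation $\lim_{N\to\infty} g(\delta N)/g(N) = \delta^{\beta}$ for every $\delta > 0$. Fix $\delta \in (0,1)$. Iterating the relation along the geometric sequence $N, \delta N, \delta^2 N, \dots$ and using uniform convergence on compact sets (the Uniform Convergence Theorem for regularly varying functions, \cite[Theorem 1.5.2]{Bingham1987}) one obtains $\log g(N) = \beta \log N + O(1)$ pieced together across scales, whence dividing by $\log N$ gives the result. Concretely: for any $\eps > 0$ there is $N_0$ with $|\log g(\delta N) - \log g(N) - \beta\log\delta| < \eps$ for all $N \ge N_0$; writing $N$ between consecutive powers $\delta^{-j} N_0$ and telescoping bounds $|\log g(N) - \beta \log N|$ by a constant plus $\eps \cdot (\text{number of steps})$, and the number of steps is $\asymp \log N$, so after dividing by $\log N$ and letting $\eps \to 0$ we conclude.

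I would also note at the outset that $g(N) \to \infty$ as $N \to \infty$ (since $g(N) = \#\{\ell : n_\ell \le N\}$ and the $n_\ell$ are a strictly increasing sequence of integers), so $\log g(N)$ is well-defined for large $N$ and the asymptotic statement makes sense; and $\beta = 1/\gamma \in (0,1]$ is a fixed positive constant, so there is no degeneracy. If one wanted to avoid invoking regular variation of $g$ as a black box, one could instead start from $n_\ell = \ell^\gamma L(\ell)$ with $L$ slowly varying, note $\log n_\ell \sim \gamma \log \ell$ directly (since $\log L(\ell) = o(\log \ell)$, again by the slow-variation representation), and then transfer this to $g$ via $g(n_\ell) \approx \ell$ together with monotonicity; but the cleanest route is the one above using what has already been set up.

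The main obstacle is essentially bookkeeping rather than conceptual: one must be careful that the convergence $g(\delta N)/g(N) \to \delta^\beta$ is used uniformly enough (over $\delta$ ranging in a fixed compact subinterval of $(0,\infty)$, not just for each fixed $\delta$) so that the telescoping across $\Theta(\log N)$ scales does not let the accumulated error term grow faster than $o(\log N)$. This is precisely what the Uniform Convergence Theorem for regularly varying functions provides, so the honest version of the proof reduces to quoting \cite[Theorem 1.5.2]{Bingham1987} or directly \cite[Proposition 1.5.9a]{Bingham1987}, after which the estimate $\log g(N) = \beta \log N + o(\log N)$ is immediate.
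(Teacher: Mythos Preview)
Your first approach is correct and is essentially the same as the paper's proof: the paper invokes the representation theorem \cite[Theorem 1.3.1]{Bingham1987} to write $g(x) = x^{\beta} c(x) \exp\bigl(\int_1^x \varepsilon(u)/u\, du\bigr)$ with $c(x)\to c\in(0,\infty)$ and $\varepsilon(x)\to 0$, observes that the integral is $o(\log x)$, and takes logarithms. Your alternative telescoping argument via the Uniform Convergence Theorem also works but is more laborious than needed; the paper and your primary route both reduce the lemma to a one-line citation plus taking logarithms.
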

  	\begin{proof}
  		As $g$ is regularly varying of index $\beta$ we can write it as
  		\begin{align*}
  			g(x) = x^{\beta} \cdot c(x) \cdot \exp\rb{\int_{1}^{x} \frac{\varepsilon(u)}{u} du},
  		\end{align*}
  		where $c(x)$ converges to some $c \in (0,\infty)$ and $\varepsilon(x)$ converges to $0$ for $x \to \infty$ (see \cite[Theorem 1.3.1]{Bingham1987}).
  		In particular we have that
  		\begin{align*}
  			\int_1^{x} \frac{\varepsilon(u)}{u} du = o(\log x),
  		\end{align*}
  		which finishes the proof by basic properties of the logarithm. 
  	\end{proof}

  \section{Automatic sequences}\label{sec:automatic}
  
Let us now describe the precise setting of our study.  
First we give some definitions related to automata which can also be found in \cite{Allouche2003}.

A sequence $(a_n)_{n\geq 0}$ with values in a finite set 
is \emph{$k$-automatic} if it can be generated by a finite automaton.  
This means that there exists a finite-state machine 
(a deterministic finite automaton with output) that takes as input the base-$k$ 
expansion of $n$ and produces as output the symbol $a_n$.  
We use the following convention.  
Inputs are read from left to right, that is,
starting from the most significant digit. 

\subsection{Formal definition of \texorpdfstring{$k$}{k}-automatic sequences} 
Throughout this paper, we will use the following notation.  
An alphabet $A$ is a finite set of symbols, also called letters. A finite word over $A$ is a 
finite sequence of letters in $A$  or, equivalently, an element of $A^*$, the free monoid  
generated by $A$.  
The length of a finite word $\bfw$, that is, the number 
of symbols in $\bfw$, is denoted by $\vert \bfw\vert$. 
We let $\epsilon$ denote the empty word, the neutral element of $A^*$.
Let $k\ge 2$ be a natural number.  
We let $\Sigma_k$ denote the alphabet $\left\{0,1,\ldots,k-1\right\}$.
Given a positive integer $n$, we set 
$(n)_k:=w_r w_{r-1}\cdots w_0$ for the canonical
base-$k$ expansion of $n$ (written from most to least significant digit),
which means 
that $n=\sum_{i=0}^r w_i k^{i}$ with $w_i\in \Sigma_k$ and $w_r\not= 0$.
Note that by convention $( 0)_k:= \epsilon$. 
Conversely, if $\bfw:= w_0 \cdots w_r$  is a finite word over the alphabet $\Sigma_k$, 
we set $[\bfw]_k:=\sum_{i=0}^r w_{r-i} k^{i}$.  
Furthermore, we let $(n)_k^t$ denote the unique word $\bfw$ of length $t$ such that  $[\bfw]_k \equiv n \bmod k^{t}$.
\begin{example}
  We find $(37)_2 = 100101, (37)_2^4 = 0101$ and $[010110]_2 = 22$.
\end{example}

\begin{definition}
A $k$-deterministic finite automaton, or $k$-DFA for short, is a quadruple $\cA = (Q,\Sigma_k, \delta, q_0)$, 
where $Q$ is a finite set of states, $\Sigma_k:=\{0,1,\ldots,k-1\}$ is the finite input alphabet,
$\delta: Q \times \Sigma \rightarrow Q$ is the transition function and $q_0 \in Q$ is the initial state. 
A $k$-DFAO $\cA = (Q,\Sigma,\delta,q_0, \Delta, \tau)$ is a $k$-DFA endowed with an additional output function 
$\tau: Q \to \Delta$, where $\Delta$ is the alphabet of output symbols. 
\end{definition}

We extend $\delta$ to a function $\delta: Q\times\Sigma^{*}\to Q$ as follows. 
Given a state $q$ in $Q$ and a finite 
word $\bfw:=w_1 w_2 \cdots w_n$ over the alphabet $\Sigma_k$, 
we define $\delta(q,\bfw)$ recursively by 
$\delta(q,\epsilon)=q$
and $\delta(q,\bfw)=\delta(\delta(q,w_1w_2\cdots w_{n-1}),w_n)$.  
Hence computing $\delta(q,\bfw)$ involves $\abs{\bfw}$ ``steps'' for every $\bfw\in \Sigma^{*}$.

\begin{definition}\label{def:automatic_sequence}
  We say that a sequence $(a(n))_{n\geq 0}$ is a \emph{$k$-automatic} sequence if  there exists a $k$-DFAO 
  $\cA = (Q, \Sigma_k, \delta, q_0,\Delta, \tau)$ 
  such that $a_n = \tau(\delta(q_0,(n)_k))$. 
	If $\Delta = Q$ and $\tau = id$, then we call $(a(n))_{n\geq 0}$, \emph{pure}.
  A sequence is automatic if it is $k$-automatic for some $k$. 
\end{definition}
There is nothing special about reading the input from left to right.
Indeed, given a $k$-automatic sequence $a(n)$ there exists a $k$-DFAO with \emph{reverse reading} producing 
the sequence $a(n)$, i.e. this $k$-DFAO reads the input $(n)_k$ from right to left (this can be found for example in~\cite[Theorem 4.3.3]{Allouche2003}).

 Let us  recall how one can change the input alphabet $\Sigma_k$ to $\Sigma_{k^{\ell}} = \{0,\ldots,k^{\ell}-1\}$.
 
  \begin{lemma}
  	Let $\cA = (Q, \Sigma_k, \delta, q_0, \Delta,\tau)$ be a $k$-DFAO such that $\delta(q_0,0) = q_0$.
  	Then, for every integer $\ell\geq 1$,  the $k^\ell$-DFAO $\cA' = (Q, \Sigma_{k^\ell}, \delta, q_0, \Delta,\tau)$ produces the same automatic sequence. 
  \end{lemma}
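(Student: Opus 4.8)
The plan is to unwind both automata on a fixed input $n$ and show that they reach the same state. First I would make explicit what the transition function of $\mathcal{A}'$ means: a letter $d\in\Sigma_{k^\ell}$ is to be read as the length-$\ell$ base-$k$ block $(d)_k^\ell$ over $\Sigma_k$, so the transition of $\mathcal{A}'$ on $d$ from a state $q$ is $\delta\bigl(q,(d)_k^\ell\bigr)$, where on the right-hand side $\delta$ denotes its extension to $Q\times\Sigma_k^{*}$. I would also record the standard concatenation identity $\delta(q,\mathbf{u}\mathbf{v})=\delta\bigl(\delta(q,\mathbf{u}),\mathbf{v}\bigr)$, which follows immediately from the recursive definition of the extension of $\delta$ by induction on $\vert\mathbf{v}\vert$.

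Next, fix $n\ge 1$ and write its base-$k^\ell$ expansion as $(n)_{k^\ell}=d_r d_{r-1}\cdots d_0$ with $d_i\in\Sigma_{k^\ell}$ and $d_r\neq 0$. Replacing each digit $d_i$ by its length-$\ell$ base-$k$ block produces the word $\mathbf{w}:=(d_r)_k^\ell (d_{r-1})_k^\ell\cdots (d_0)_k^\ell$, and since $[(d_i)_k^\ell]_k=d_i$ one checks $[\mathbf{w}]_k=\sum_{i=0}^r d_i (k^\ell)^i=n$. As $d_r\neq0$, stripping leading zeros from $\mathbf{w}$ yields exactly $(n)_k$; that is, $\mathbf{w}=0^s(n)_k$ for some $s\in\{0,\ldots,\ell-1\}$ (the number of leading zeros needed to pad $d_r$ to length $\ell$). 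Running $\mathcal{A}'$ on $(n)_{k^\ell}$ from $q_0$ and applying the concatenation identity gives $\delta\bigl(q_0,(n)_{k^\ell}\bigr)=\delta(q_0,\mathbf{w})$, interpreted in $\Sigma_k^{*}$.

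Now I would invoke the hypothesis $\delta(q_0,0)=q_0$: iterating it gives $\delta(q_0,0^s)=q_0$, whence $\delta(q_0,\mathbf{w})=\delta\bigl(q_0,0^s(n)_k\bigr)=\delta\bigl(\delta(q_0,0^s),(n)_k\bigr)=\delta\bigl(q_0,(n)_k\bigr)$. Thus both automata reach the same state, and applying the common output function $\tau$ yields $\tau\bigl(\delta(q_0,(n)_{k^\ell})\bigr)=\tau\bigl(\delta(q_0,(n)_k)\bigr)=a(n)$; the case $n=0$ is immediate since $(0)_k=(0)_{k^\ell}=\epsilon$. The only genuinely delicate point — and precisely the reason the hypothesis of a $0$-labeled loop at $q_0$ is needed — is the leading-zeros mismatch between $(n)_k$ and the concatenation $\mathbf{w}$ of length-$\ell$ blocks; everything else is bookkeeping with the recursive definition of $\delta$.
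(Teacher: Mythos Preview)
Your proof is correct and is precisely a detailed unwinding of the paper's one-sentence argument: the paper simply says the result follows from the extension of $\delta$ to $Q\times\Sigma^{*}$, the correspondence between base-$k$ and base-$k^{\ell}$ representations, and the fact that $\delta(q_0,0)=q_0$ allows leading zeros to be ignored. You have made each of these three ingredients explicit, including the one genuinely delicate point you flag (the leading-zero mismatch), so there is nothing to add.
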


  \begin{proof}
  	This follows directly from the extension of $\delta$ to $Q \times \Sigma^{*} \to Q$, the way the representation in base $k$ and in base $k^{\ell}$ correspond to each other and that $\delta(q_0,0) = q_0$ allows us to ignore leading zeros both for $\cA$ and $\cA'$.
  \end{proof}


%
%

\subsection{Densities for automatic sequences}

We recall in this section some results about densities and logarithmic densities for automatic sequences.

  \begin{lemma}[Theorem 7 in \cite{Cobham1972}]
	Let $a(n)$ be an automatic sequence. Then the logarithmic density exists for every $\alpha$, i.e.
	\begin{align*}
		\lim_{N\to \infty}\frac{1}{\log N} \sum_{n\leq N} \frac{1}{n} \ind_{[a(n) = \alpha]},
	\end{align*}
	exists and is denoted by $d_{\log}(a(n), \alpha)$.
\end{lemma}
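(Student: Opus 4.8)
The plan is to turn the statement into a fixed-point / ergodic computation for powers of a stochastic matrix. Fix $\alpha$ and a $k$-DFAO $\cA=(Q,\Sigma_k,\delta,q_0,\Delta,\tau)$ producing $a$, and for a subset $E\subseteq Q$ write $A^E:=\{n\ge 0:\delta(q_0,(n)_k)\in E\}$ and $S^E(t):=\#\{0\le n<t:n\in A^E\}$; there are only finitely many such sets. With $F:=\tau^{-1}(\alpha)$ we have $\ind_{[a(n)=\alpha]}=\ind_{[n\in A^F]}$, and partial summation gives, using $0\le S^E(t)\le t$,
\[
\sum_{1\le n\le N}\frac{\ind_{[n\in A^E]}}{n}=G^E(N)+O(1),\qquad G^E(N):=\int_1^N\frac{S^E(t)}{t^2}\,dt .
\]
So it is enough to show that $G^F(N)/\log N$ converges; this will also produce an explicit value.

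Next I would use the self-similarity of automatic sets. Writing $n=km+j$ with $0\le j<k$ and $(n)_k=(m)_k\,j$ one gets $n\in A^E\iff m\in A^{E_j}$, where $E_j:=\delta(\cdot,j)^{-1}(E)$, the only exception coming from the single index $n=0$ (the familiar harmless leading-zero issue). Consequently the ``digit-splitting defect'' $S^E(t)-\sum_{j=0}^{k-1}S^{E_j}(t/k)$ is a \emph{bounded} function of $t$ (a constant accounting for $n=0$, plus $k$ rounding terms lying in $\{-1,0\}$). Feeding this into $G^E$ and substituting $t\mapsto t/k$ yields, uniformly in $E$,
\[
G^E(N)=\frac1k\sum_{j=0}^{k-1}G^{E_j}(N/k)+e^E(N),
\]
where $e^E(N)$ has the shape $(\text{const})+\int_1^N h^E(t)\,t^{-2}\,dt$ with $h^E$ bounded; hence $e^E(N)$ converges to a constant $e^E_\infty$ as $N\to\infty$.

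To finish I would package the finitely many functions $G^E$ into a vector $\vec G(N)$ indexed by $E\subseteq Q$ and let $B$ be the matrix defined by $(B\vec x)_E=\frac1k\sum_{j}x_{E_j}$; then $B$ is row-stochastic and $\vec G(N)=B\,\vec G(N/k)+\vec e(N)$ with $\vec e(N)\to\vec e_\infty$. Iterating at $N=k^U$ (and using $\vec G(1)=0$) gives $\vec G(k^U)=\sum_{i=0}^{U-1}B^{i}\vec e(k^{U-i})$. Now I invoke the standard fact that for a stochastic matrix the Cesàro means $\frac1M\sum_{i<M}B^i$ converge to a projection $\Pi$ onto $\ker(I-B)$ along $\operatorname{range}(I-B)$, with $B\Pi=\Pi$. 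Splitting $\vec e_\infty=\Pi\vec e_\infty+(I-B)\vec g$ makes $\sum_{i<U}B^i\vec e_\infty=U\,\Pi\vec e_\infty+(I-B^U)\vec g$ telescope, while $\bigl\|\sum_{i<U}B^i(\vec e(k^{U-i})-\vec e_\infty)\bigr\|\le\sum_{j\le U}\|\vec e(k^j)-\vec e_\infty\|=o(U)$ since the summands tend to $0$. Therefore $\vec G(k^U)=U\,\Pi\vec e_\infty+o(U)$, and the elementary bound $0\le G^E(k^{U+1})-G^E(k^U)\le\log k$ upgrades this to $\vec G(N)/\log N\to\frac1{\log k}\Pi\vec e_\infty$ for all $N\to\infty$. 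In particular $d_{\log}(a(n),\alpha)$ exists and equals $\tfrac1{\log k}(\Pi\vec e_\infty)_F$.

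The main obstacle is obtaining \emph{convergence} — not just boundedness — of the remainder after iterating the recursion; everything hinges on the observation that the digit-splitting defect $S^E(t)-\sum_jS^{E_j}(t/k)$ is bounded, which makes $e^E(N)$ stabilize. The only other nontrivial input is that stochastic matrices are Cesàro-summable (their unimodular eigenvalues are semisimple); this is precisely what absorbs the possible periodicity of the final strongly connected components of $\cA$ — the very feature that can destroy the ordinary density while leaving the logarithmic one intact.
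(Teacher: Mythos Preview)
The paper does not prove this lemma at all; it is stated as a direct citation of Theorem~7 in Cobham's 1972 paper and used as a black box throughout (for instance, in the proof of Proposition~\ref{prop:decomposition} to guarantee that the sets $M_i$ have logarithmic densities). So there is no in-paper argument to compare against.

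Your argument is correct. The two substantive points are (i) that the digit-splitting defect $S^E(t)-\sum_j S^{E_j}(t/k)$ is bounded, which makes the remainder $e^E(N)$ actually \emph{converge} rather than merely stay bounded, and (ii) that for a row-stochastic matrix $B$ the Ces\`aro means $\tfrac1M\sum_{i<M}B^i$ converge to a projection $\Pi$ onto $\ker(I-B)$ along $\operatorname{range}(I-B)$; both are verified. The decomposition $\vec e_\infty=\Pi\vec e_\infty+(I-B)\vec g$ is legitimate precisely because boundedness of $\{B^n\}$ forces the unimodular eigenvalues to be semisimple, so $\ker\Pi=\operatorname{range}(I-B)$ by a dimension count. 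The passage from the subsequence $N=k^U$ to general $N$ via $0\le G^E(k^{U+1})-G^E(k^U)\le\log k$ is also fine.

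One cosmetic remark: indexing the vector $\vec G$ by all $2^{|Q|}$ subsets $E\subseteq Q$ works, but is heavier than necessary; only the finitely many iterated preimages $\delta(\cdot,w)^{-1}(F)$ of $F=\tau^{-1}(\alpha)$ ever appear, and one could equally well run the recursion on singletons $\{q\}$ with the (column-stochastic) normalized incidence matrix. This does not affect correctness. Your approach is in the same spirit as Cobham's original argument and as the matrix computations in Section~\ref{sec:computability} of the present paper, the point being that Ces\`aro averaging of the transition matrix is exactly what survives when the final components are not primitive.
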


\begin{lemma}\label{le_log0_to_0}
	Let $a(n)$ be an automatic sequence, such that the logarithmic density of $\alpha$ is $0$, 
	then the density of $\alpha$ exists and equals $0$.
\end{lemma}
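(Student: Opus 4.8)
The plan is to show directly that $A(N)/N \to 0$ as $N\to\infty$, where $A(N):=\#\{\,0\le n<N:\ a(n)=\alpha\,\}$; this gives at once that the density $d(a(n),\alpha)$ exists and equals $0$.

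The first ingredient is a routine Abel summation. Writing $A(m)=\sum_{n=1}^m\mathbf 1_{[a(n)=\alpha]}$ and using $A(m)\le m$ together with $\tfrac{1}{n(n+1)}\ge\tfrac1{2n^2}$, we get
\[
\sum_{n=1}^{N}\frac{\mathbf 1_{[a(n)=\alpha]}}{n}
=\frac{A(N)}{N}+\sum_{n=1}^{N-1}\frac{A(n)}{n(n+1)}
\ \ge\ \frac12\sum_{n=1}^{N-1}\frac{A(n)}{n^{2}}.
\]
Consequently, if $A(n)\ge\beta n$ for every $n$ in some set $S\subseteq\N$, then $\sum_{n\le N}\tfrac1n\mathbf 1_{[a(n)=\alpha]}\ge\tfrac\beta2\sum_{n\in S,\,n<N}\tfrac1n$; dividing by $\log N$ and letting $N$ run along a sequence realising the upper logarithmic density $\delta$ of $S$, and using that $\tfrac1{\log N}\sum_{n\le N}\tfrac1n\mathbf 1_{[a(n)=\alpha]}$ converges (to $d_{\log}(a(n),\alpha)$) by the preceding lemma, we obtain $d_{\log}(a(n),\alpha)\ge\tfrac{\beta\delta}{2}$. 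In particular the hypothesis $d_{\log}(a(n),\alpha)=0$ forces: \emph{for every $\beta>0$, the set $\{\,n:\ A(n)\ge\beta n\,\}$ has zero upper logarithmic density.}

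Now assume, for contradiction, that $M:=\limsup_{N\to\infty}A(N)/N>0$. Here I would appeal to the precise asymptotic form of the summatory function of an automatic sequence worked out by Cobham~\cite{Cobham1972}: since $A(N)\le N$ excludes any super-linear term, one obtains a description
\[
\frac{A(N)}{N} = \sum_{\zeta}\zeta^{\lfloor\log_k N\rfloor}\,\psi_{\zeta}\big(\{\log_k N\}\big) + O\big(N^{-\eta}\big),
\]
with $\eta>0$, finitely many roots of unity $\zeta$, and $\psi_\zeta$ fixed bounded functions on $[0,1)$ enjoying mild regularity (e.g.\ right-continuity and bounded variation), which is all that is used below. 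The point is then purely soft: a finite combination $\ell\mapsto\sum_\zeta\zeta^{\ell}c_\zeta$ attains values within $\epsilon$ of its supremum along a \emph{periodic} set of exponents $\ell$, and for any such $\ell$ the function $\theta\mapsto\sum_\zeta\zeta^{\ell}\psi_\zeta(\theta)$ exceeds $M-\epsilon$ on a genuine subinterval of $[0,1)$; pulling back through $\ell=\lfloor\log_k N\rfloor$, $\theta=\{\log_k N\}$, one finds, inside every $k$-adic block $[k^\ell,k^{\ell+1})$ whose index $\ell$ lies in the periodic class, a block of consecutive integers $N$ with $A(N)/N>M/2$, and the union of all these blocks has positive lower logarithmic density. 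Thus $\{\,n:\ A(n)\ge (M/2)\,n\,\}$ has positive upper logarithmic density.

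Taking $\beta=M/2$ in the first step, this contradicts the conclusion of the second paragraph. Hence $M=0$, i.e.\ $A(N)/N\to0$, which proves the lemma. The only step that is not mechanical is the third paragraph, namely extracting from Cobham's analysis that a non-trivial oscillation of $A(N)/N$ at the linear scale cannot be concentrated on a logarithmically negligible set of integers; if one prefers, one can quote \cite{Cobham1972} directly, where the linear part of $A(N)$ is isolated together with its logarithmic average --- which equals $d_{\log}(a(n),\alpha)$ --- so that the hypothesis forces this non-negative linear part to have average $0$, hence to vanish identically.
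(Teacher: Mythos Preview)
Your first step (the Abel summation showing that $d_{\log}(a(n),\alpha)=0$ forces $\{n:A(n)\ge\beta n\}$ to have zero upper logarithmic density for every $\beta>0$) is correct, but it is more than what is needed, and by itself insufficient: a set can have zero upper logarithmic density while having positive upper (Ces\`aro) density, so you really do need the automaticity somewhere.

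The genuine gap is in your third paragraph. The formula
\[
\frac{A(N)}{N}=\sum_{\zeta}\zeta^{\lfloor\log_k N\rfloor}\psi_\zeta(\{\log_k N\})+O(N^{-\eta})
\]
is not a theorem that appears in~\cite{Cobham1972} in this form; Cobham's analysis of the summatory function goes through eigenvalues of the incidence matrix and does not hand you functions $\psi_\zeta$ with the regularity you assert (right-continuity, bounded variation). Without a precise statement, the ensuing ``soft'' argument about periodic classes of exponents and genuine subintervals of $[0,1)$ is not a proof. You acknowledge this yourself when you offer the fallback ``if one prefers, one can quote \cite{Cobham1972} directly''---but if you are going to cite Cobham, then the first two paragraphs are superfluous.

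The paper's argument is a two-liner: the elementary inequality (valid for \emph{any} sequence)
\[
\liminf_{N\to\infty}\frac{1}{N}\sum_{n\le N}\mathbf 1_{[a(n)=\alpha]}\ \le\ \liminf_{N\to\infty}\frac{1}{\log N}\sum_{n\le N}\frac{1}{n}\mathbf 1_{[a(n)=\alpha]}
\]
gives lower density $0$, and then Cobham's Theorem~11 (for automatic sequences, lower density $0$ implies density exists and equals $0$) finishes the proof. This is the clean way to package the dependence on~\cite{Cobham1972}; your route is a roundabout attempt to re-derive that theorem, and the attempt is not complete.
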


\begin{proof}
	It follows directly by partial summation that for any sequence $a$, we have
	\begin{align*}
		\liminf_{N\to\infty} \frac{1}{N} \sum_{n\leq N} \ind_{[a(n)=\alpha]} \leq \liminf_{N\to\infty} \frac{1}{\log N} \sum_{n\leq N} \frac{1}{n} \ind_{[a(n)=\alpha]}.
	\end{align*}
	By assumption we know that the logarithmic density is $0$ and, therefore, the lower density of $\alpha$ is $0$.
	By \cite[Theorem 11]{Cobham1972} we know that this can only be the case if the density is $0$.
\end{proof}
  
  \begin{lemma}\label{le_0_to_banach0}
  	Let $a(n)$ be an automatic sequence, such that the density of $\alpha$ is $0$,  then the upper Banach density\footnote{The upper Banach density of $a$ is defined as $d^{*}(a) := \limsup \limits_{N-M \to \infty} \frac{\#\{M \leq n \leq N: a(n) \neq 0\}}{N-M+1}$.} is also $0$.
  \end{lemma}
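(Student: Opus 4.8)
**Proof proposal for Lemma \ref{le_0_to_banach0} (density zero $\Rightarrow$ upper Banach density zero for automatic sequences).**

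The plan is to exploit the self-similar structure of the automaton producing $a(n)$, namely the fact that reading a common prefix of length $t$ sends $n$ into one of finitely many states, and the behaviour of $a$ on the corresponding residue class modulo $k^t$ is governed by a shifted copy of (a sequence closely related to) $a$ itself. Concretely, fix the $k$-DFAO $\cA=(Q,\Sigma_k,\delta,q_0,\Delta,\tau)$ with $\delta(q_0,0)=q_0$; for a state $q\in Q$ let $a_q(n):=\tau(\delta(q,(n)_k))$ be the automatic sequence obtained by changing the initial state to $q$. Each $a_q$ is again $k$-automatic (same transition structure), so by the already-established Lemma \ref{le_log0_to_0} and the preceding lemma, each $a_q$ has a density of the value $\alpha$ equal to some $d_q\in[0,1]$, and the hypothesis gives $d_{q_0}=0$. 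The key point I would establish first is a \emph{uniform-in-the-window} statement: for every state $q$ reachable from $q_0$,
\[
\lim_{N-M\to\infty}\ \frac{\#\{M\le n\le N:\ a_q(n)=\alpha\}}{N-M+1}=d_q,
\]
i.e. the density along arbitrary long intervals exists and equals the ordinary density. This cannot be deduced purely formally — it is exactly the ``Banach density'' refinement — so it will have to be proved by induction on the structure of $\cA$.

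The induction I have in mind decomposes $\{0,1,\dots,k^t-1\}$ into residue classes and writes, for any interval $[M,N]$ and any $t$,
\[
\#\{M\le n\le N:\ a_q(n)=\alpha\}=\sum_{r=0}^{k^t-1}\#\{M\le n\le N:\ n\equiv r\ (k^t),\ a_{\delta(q,(r)_k^t)}(\lfloor n/k^t\rfloor)=\alpha\},
\]
using that $(n)_k$ factors through its last $t$ digits as a prefix in the \emph{reverse-reading} automaton — here I would switch to the reverse-reading DFAO from \cite[Theorem 4.3.3]{Allouche2003} so that the low-order digits are read first, which is what makes a residue class $r\bmod k^t$ correspond to a fixed state and a linear rescaling $n\mapsto\lfloor n/k^t\rfloor$ of the remaining argument. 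For a window of length $\gg k^t$, each inner count is $\frac{N-M+1}{k^t}(d_{q_r}+o(1))$ by the inductive hypothesis applied to the automaton with $|Q|$ states but now on windows that are themselves long; averaging over $r$ and letting $t\to\infty$ pins the Banach density of $a_q$ to $d_q$ provided the ``boundary'' residues (those $r$ for which the rescaled window is still short) contribute negligibly, which they do as $t$ is fixed and $N-M\to\infty$. Running this with $q=q_0$ and $d_{q_0}=0$ gives $d^*(\{n:a(n)=\alpha\})=0$, which is the claim (with the harmless notational remark that the footnote defines $d^*$ via $a(n)\neq 0$, so one either reads $\alpha=0$ or, equally, applies the argument to the $\{0,1\}$-valued automatic sequence $\ind_{[a(n)=\alpha]}$ and its complement).

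An alternative, arguably cleaner, route avoids reproving the interval-density statement from scratch: note that $\ind_{[a(n)=\alpha]}$ is an automatic sequence, hence by Cobham's structure theory its set of $1$'s, if of density zero, is contained in a finite union of sets of the form $\{[\bfu\, w\, \bfv]_k : \bfv\in\Sigma_k^*\}$ with the ``frequently occurring'' middle blocks $w$ excluded — more usefully, one invokes the fact that a density-zero automatic set is exactly one whose generating automaton has \emph{no} final strongly connected component outputting $\alpha$ with positive weight, equivalently all paths staying in transient components or in components of weight zero, which forces the counting function $\#\{n\le N:a(n)=\alpha\}$ to be $O(N^{1-c})$ or even $O((\log N)^{C})$-type subpolynomial growth along \emph{every} geometric scale; such a bound is automatically shift-uniform because replacing $[0,N]$ by $[M,M+H]$ only changes the relevant digit-prefix, not the growth exponent. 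I would use whichever of these is already available from the cited Cobham results; the first (inductive) argument is self-contained modulo \cite{Cobham1972}, the second is shorter if the needed subpolynomial bound is quotable.

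The main obstacle is precisely the uniformity over the position of the window: ordinary density of an automatic set does not in general imply positive-density ``syndetic'' behaviour, but density \emph{zero} does propagate, and the crux is showing that a density-zero automatic set cannot concentrate on a far-away long interval. I expect the cleanest justification to be the estimate that for a density-zero automatic sequence the partial counting function satisfies $\#\{n\le N : a(n)=\alpha\}=o(N/\log N)$ at worst (in fact much smaller, typically $N^{1-\eta}$), uniformly enough that the difference $\#\{n\le N\}-\#\{n\le M\}$ over a window $[M,N]$ is $o(N-M)$ once $N-M\to\infty$ — this is where the structural input from \cite[Theorems 7 and 11]{Cobham1972}, already used in Lemma \ref{le_log0_to_0}, does the real work, and I would make that dependence explicit rather than re-deriving it.
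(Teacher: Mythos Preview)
Your first approach has a genuine gap: the ``induction'' you describe is not well-founded. Decomposing a window $[M,N]$ into residue classes modulo $k^t$ (via the reverse-reading automaton) replaces the problem of bounding $\#\{M\le n\le N:a(n)=\alpha\}$ by $k^t$ problems of the same shape, namely bounding $\#\{M_r\le m\le N_r:a_{q_r}(m)=\alpha\}$ over windows of length $\approx (N-M)/k^t$. But this is again a Banach-density question for an automatic sequence with the \emph{same} state set, so no parameter has decreased; you are invoking the conclusion you are trying to prove. There is also a smaller issue: the sentence ``each $a_q$ has a density of the value $\alpha$ equal to some $d_q\in[0,1]$'' does not follow from the cited lemmas, which only guarantee logarithmic densities in general. (It happens that for every $q$ reachable from $q_0$ one can deduce $d_q=0$ from the hypothesis, but that still does not rescue the circularity.)

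Your second approach is the right instinct but stops short of the key citation. The paper's proof is a one-liner: by \cite[Theorem~9]{Cobham1972}, an automatic set of density zero is contained in a \emph{set with a missing digit}, i.e.\ for some $\ell$ and some block $\bfw\in\Sigma_k^\ell$ the set is contained in $\{n:\bfw\text{ does not occur in }(n)_k\}$. Such a set has at most $(k^\ell-1)^m$ elements in any aligned interval of length $k^{\ell m}$, hence $O(L^{1-c})$ elements in \emph{any} window of length $L$, which is precisely the shift-uniform subpolynomial bound you were reaching for. Once you name this result, the lemma is immediate; the elaborate window-splitting is unnecessary.
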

  
  \begin{proof}
  	The set of integers for which $a(n) = \alpha$ is contained in a set with a missing digit by \cite[Theorem 9]{Cobham1972}. This immediately implies the statement.
  \end{proof}
  Lemma~\ref{le_log0_to_0} and Lemma~\ref{le_0_to_banach0} tell us that some (in general quite different) notions of sparseness actually coincide for automatic sequences.

 \subsection{Some subclasses of automata and automatic sequences}

 In this section, we recall various definitions about automata and automatic sequences. 
 
 \begin{definition}\label{def_minimal}
	A $k$-DFAO $\cA = (Q,\Sigma_k,\delta,q_0, \Delta, \tau)$ and the corresponding automatic sequence 
	is called \emph{minimal} if 
	\begin{itemize}
		\item For every $q \in Q$ there exists $\bfw \in \Sigma_k^{*}$ such that $\delta(q_0, \bfw) = q$.
		\item For every two different states $q_1, q_2 \in Q$ there exists $\bfw \in \Sigma_k^{*}$ such that $\tau(\delta(q_1, \bfw)) \neq \tau(\delta(q_2, \bfw))$.
	\end{itemize}
\end{definition}

\begin{fact}[\cite{Allouche2003} Corollary 4.1.9]
	Any $k$-automatic sequence can be produced by a minimal $k$-DFAO.
\end{fact}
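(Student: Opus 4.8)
The plan is to carry out the standard two-step minimization of a deterministic finite automaton with output, i.e. the Myhill--Nerode construction adapted to transducers. Fix an arbitrary $k$-DFAO $\cA = (Q, \Sigma_k, \delta, q_0, \Delta, \tau)$ with $a(n) = \tau(\delta(q_0, (n)_k))$ for all $n \ge 0$. First I would restrict to accessible states: set $Q' := \{\delta(q_0, \bfw) : \bfw \in \Sigma_k^{*}\}$. If $q = \delta(q_0, \bfw) \in Q'$ then $\delta(q, c) = \delta(q_0, \bfw c) \in Q'$, so $\delta$ restricts to a map $Q' \times \Sigma_k \to Q'$ and $(Q', \Sigma_k, \delta, q_0, \Delta, \tau)$ is again a $k$-DFAO. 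Since the output on input $n$ only depends on the run $\delta(q_0, (n)_k)$, which never leaves $Q'$, this DFAO still produces $a(n)$, and the first bullet of Definition~\ref{def_minimal} now holds by construction.

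Next I would merge indistinguishable states. On $Q'$ define $q_1 \sim q_2$ if and only if $\tau(\delta(q_1, \bfw)) = \tau(\delta(q_2, \bfw))$ for every $\bfw \in \Sigma_k^{*}$; this is plainly an equivalence relation. Using the associativity identity $\delta(\delta(q, u), v) = \delta(q, uv)$ (a trivial induction on $\abs{v}$ from the recursion defining $\delta$ on words), one checks that $\sim$ is a right congruence: if $q_1 \sim q_2$ then for every letter $c$ and word $\bfw$ one has $\tau(\delta(\delta(q_1, c), \bfw)) = \tau(\delta(q_1, c\bfw)) = \tau(\delta(q_2, c\bfw)) = \tau(\delta(\delta(q_2, c), \bfw))$, that is, $\delta(q_1, c) \sim \delta(q_2, c)$; and taking $\bfw = \epsilon$ shows $q_1 \sim q_2$ implies $\tau(q_1) = \tau(q_2)$. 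Hence one may form the quotient $k$-DFAO $\cA'$ with state set $Q'/{\sim}$, initial state $[q_0]$, output alphabet $\Delta$, transition map $\bar\delta([q], c) := [\delta(q, c)]$, and output map $\bar\tau([q]) := \tau(q)$; the last two are well defined precisely by the two facts just noted.

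It then remains to verify that $\cA'$ works. A straightforward induction on $\abs{\bfw}$ gives $\bar\delta([q_0], \bfw) = [\delta(q_0, \bfw)]$, whence $\bar\tau(\bar\delta([q_0], (n)_k)) = \tau(\delta(q_0, (n)_k)) = a(n)$, so $\cA'$ produces the same sequence. Accessibility of each class $[q] = [\delta(q_0, \bfw)] = \bar\delta([q_0], \bfw)$ is inherited from $Q'$, giving the first bullet of Definition~\ref{def_minimal}. Finally, if $[q_1] \ne [q_2]$ then $q_1 \not\sim q_2$, so there is a word $\bfw$ with $\tau(\delta(q_1, \bfw)) \ne \tau(\delta(q_2, \bfw))$, and then $\bar\tau(\bar\delta([q_1], \bfw)) = \tau(\delta(q_1, \bfw)) \ne \tau(\delta(q_2, \bfw)) = \bar\tau(\bar\delta([q_2], \bfw))$, which is exactly the second bullet. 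Thus $\cA'$ is a minimal $k$-DFAO producing $a(n)$.

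I do not expect any genuine obstacle here: the argument is entirely bookkeeping built on the two classical operations of trimming unreachable states and quotienting by the indistinguishability relation. The only point that warrants a moment's care is the verification that $\sim$ is a right congruence, so that the induced transition map $\bar\delta$ on equivalence classes is well defined; once that is in place, everything else follows by unwinding the definitions and a pair of trivial inductions on word length.
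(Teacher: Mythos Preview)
Your argument is correct and is exactly the standard Myhill--Nerode minimization for DFAOs: trim to the accessible part, then quotient by the output-indistinguishability congruence. The paper does not actually prove this statement---it merely records it as a fact with a citation to \cite{Allouche2003}, Corollary~4.1.9---and the proof there is essentially the one you have written, so there is nothing further to compare.
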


   \begin{definition}
  	A $k$-DFA $\mathcal{A} = (Q, q_0, \Sigma_k, \delta)$ is \emph{strongly connected} if for any $q_1, q_2 \in Q$ there exists $\bfw \in \Sigma^{*}$ such that $\delta(q_1, \bfw) = q_2$. It is  \emph{primitive} if there exists some $\ell \in \N$ such that for any $q_1, q_2 \in Q$ there exists $\bfw \in \Sigma^{\ell}$ such that $\delta(q_1, \bfw) = q_2$. 
 Finally,  $\mathcal{A}$ is \emph{prolongable} if $\delta(q_0, 0) = q_0$.
  \end{definition}
  
    \begin{definition}
  	A $k$-automatic sequence is said to be \emph{prolongable} (resp.\ \emph{primitive}) if it can be produced by a $k$-DFAO whose corresponding $k$-DFA is prolongable (resp.\ primitive). It is called \emph{pure} if it can be produced by a $k$-DFAO whose 
output function is the identity.  
  \end{definition}

  \begin{lemma}\label{le_str_connected_primitive}
  	Let $\cA= (Q, q_0, \Sigma_k, \delta)$ be a strongly connected $k$-DFA 
	such that there exist some $q \in Q$ and $ i \in \Sigma$ with $\delta(q,i) = q$. Then $\cA$ is primitive.
  \end{lemma}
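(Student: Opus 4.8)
The statement to prove is Lemma~\ref{le_str_connected_primitive}: if $\cA = (Q, q_0, \Sigma_k, \delta)$ is strongly connected and admits at least one self-loop $\delta(q,i) = q$, then $\cA$ is primitive. The plan is to construct, for every ordered pair $(q_1, q_2)$ of states, a word of one common length $\ell$ sending $q_1$ to $q_2$, by first routing every path through the loop state $q$ and then padding with the loop letter $i$ to equalize lengths.

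First I would fix the loop state $q$ and the loop letter $i$ with $\delta(q,i)=q$, so that $\delta(q, i^t) = q$ for every $t \geq 0$. By strong connectedness, for each state $s \in Q$ there is a word $\bfu_s$ with $\delta(s, \bfu_s) = q$, and a word $\bfv_s$ with $\delta(q, \bfv_s) = s$. Let $m$ be an upper bound for all the lengths $\abs{\bfu_s}$ and $\abs{\bfv_s}$ as $s$ ranges over the (finite) set $Q$. Then for any pair $(q_1, q_2)$, the word $\bfu_{q_1}\, i^{\,2m - \abs{\bfu_{q_1}} - \abs{\bfv_{q_2}}}\, \bfv_{q_2}$ sends $q_1$ to $q$, loops at $q$, and then moves to $q_2$; one checks the exponent is nonnegative since $\abs{\bfu_{q_1}} + \abs{\bfv_{q_2}} \leq 2m$. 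This word has length exactly $2m$, independent of $q_1$ and $q_2$. Hence with $\ell := 2m$ the primitivity condition is satisfied: for all $q_1, q_2$ there is $\bfw \in \Sigma^\ell$ with $\delta(q_1, \bfw) = q_2$.

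There is essentially no hard obstacle here; the only point requiring a little care is making sure the padding exponent $2m - \abs{\bfu_{q_1}} - \abs{\bfv_{q_2}}$ is nonnegative, which is exactly why $m$ is chosen as a uniform upper bound on all the connecting words at once rather than pair-by-pair. One should also note explicitly that $\delta$ has been extended to $Q \times \Sigma^*$ (as defined earlier in Section~\ref{sec:automatic}), so that concatenation of words corresponds to composition of the induced maps, which is what justifies factoring the action of $\bfu_{q_1}\, i^t\, \bfv_{q_2}$ through the intermediate visits to $q$.
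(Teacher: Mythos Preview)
Your proof is correct and follows essentially the same approach as the paper: route every pair $(q_1,q_2)$ through the loop state $q$ via words $\bfu_{q_1}$ and $\bfv_{q_2}$, then pad with powers of the loop letter $i$ to achieve a common length. The only cosmetic difference is that you fix the uniform bound $m$ upfront and take $\ell = 2m$ explicitly, whereas the paper observes that $\bfw_1 i^n \bfw_2$ works for all $n \geq 0$ and then invokes finiteness of $Q \times Q$ to find a common $\ell$.
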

  
  \begin{proof}
  	Let $q_1, q_2 \in Q$. As $\cA$ is strongly connected, there exist $\bfw_1, \bfw_2 \in \Sigma^{*}$ such that $\delta(q_1, \bfw_1) = q, \delta(q, \bfw_2) = q_2$. Thus, we find that $\delta(q_1, \bfw_1 i^{n} \bfw_2) = q_2$ for any $n \in \N$. This shows that for any sufficiently large $\ell$ there exists some $\bfw \in \Sigma^{\ell}$ such that $\delta(q_1,\bfw) = q_2$. As this works for all (finitely many) pairs $q_1,q_2 \in Q$ we find some $\ell$ that works for all pairs simultaneously.
  \end{proof}

  \begin{definition}\label{def_minset}
  	Let $\cA = (Q, q_0, \Sigma_k, \delta)$ be a $k$-DFA.
  	A \emph{final component} of $\mathcal{A}$ is a minimal (with respect to inclusion) non-empty set $F \subseteq Q$ that is closed under $\delta(.,.)$.
  	The \emph{column number} of $\mathcal{A}$ is defined by
  	\begin{align*}
  		c(\mathcal{A}) := \min_{w \in \Sigma^{*}} \abs{\delta(Q, w)}.
  	\end{align*} 
  	We define $\mathcal{X}(\mathcal{A})$ as the set of subsets of $Q^{c}$ that are realized as $\delta(Q, w)$ for some $w\in \Sigma^{*}$.
  	Furthermore, we call a word $w$ \emph{minimizing} if $\abs{\delta(Q, w)} = c(\cA)$. If $c(\cA) = 1$, we call it \emph{synchronizing}.
  \end{definition}

\section{ A structural result for automatic sequences}

  This section is dedicated to the following structural result  concerning automatic sequences.
  
\begin{proposition} \label{prop:decomposition}
  Let $(a(n))_{n\geq 0}$ be a $k$-automatic sequence.
  Then there exists a finite set $\mathcal{B} = \{b_1,b_2,\ldots, b_s\}$ of $k$-automatic sequences that 
  are produced by some prolongable and primitive $k^\ell$-DFAO, where $\ell\geq 1$ is an integer, 
 and  with the following property.
  For every $b_i = (b_i(n))_{n\in \N} \in \mathcal{B}$, we set  
  $$M_i := \{m \in \N: a(mk^{\lambda} + r) = b_i(mk^{\lambda} +r) \,,
  \;\forall \lambda \in \mathbb{N}, 0\leq r < k^{\lambda}\} \,.$$
  The sets $M_i$,  $1 \leq i \leq s$, are pairwise disjoint and the logarithmic densities of $M_i$,  $1 \leq i \leq s$, 
  exist and are positive.
  Furthermore, the (upper Banach) density of $M_0 := \mathbb{N} \setminus \cup_i M_i$ exists and 
  equals $0$.
\end{proposition}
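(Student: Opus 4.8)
The plan is to build the family $\mathcal{B}$ by iterating over the strongly connected component structure of the minimal automaton of $a(n)$, pushing the initial state deeper and deeper until we land inside a strongly connected component, and then using base change to make the relevant DFAOs prolongable and primitive.

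\textbf{Step 1: Reduction to pure automatic sequences and the Cobham–Eilenberg picture.} First I would take a minimal $k$-DFAO $\mathcal{A} = (Q,\Sigma_k,\delta,q_0,\Delta,\tau)$ producing $a(n)$. The key observation is that for each state $q\in Q$ reachable from $q_0$ by reading a word $w$ (with $[w]_k = r$ and $|w| = \lambda$), the sequence $n \mapsto \tau(\delta(q,(n)_k))$ — call it $a_q(n)$ — is again a $k$-automatic sequence, and $a(mk^\lambda + r) = a_q(m)$ for all $m$ whenever reading $w$ from $q_0$ leads to $q$ and $w$ has no issues with leading zeros. The sets $M_i$ in the statement are exactly designed to capture those $m$ for which the ``tail'' behaviour of $a$ starting after the block $(m)_k$ followed by a suffix agrees with some fixed primitive sequence $b_i$. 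So the natural candidates for the $b_i$ are the sequences $a_q$ where $q$ ranges over states lying in final strongly connected components of $\mathcal{A}$ (or rather of a suitable power of $\mathcal{A}$).

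\textbf{Step 2: Passing to a power of the automaton to gain primitivity and prolongability.} A state $q$ in a strongly connected component $C$ need not have a $0$-loop, so $a_q$ need not be prolongable, and $C$ with the base-$k$ action need not be primitive (it could be periodic). The standard fix is to replace $k$ by $k^\ell$ for a suitable $\ell$: by Lemma~\ref{le_str_connected_primitive} and the surrounding discussion, once we are inside a strongly connected component, a large enough power $k^\ell$ makes the component-restricted automaton primitive, and by choosing $\ell$ divisible by appropriate periods and re-rooting at a state with a $0^\ell$-loop (which exists after raising to a power, since inside a strongly connected primitive component every state is reached by $0^\ell$ from some state, and one iterates to find a genuine fixed point of reading $0^\ell$), we obtain a prolongable primitive $k^\ell$-DFAO. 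This yields the finite set $\mathcal{B} = \{b_1,\dots,b_s\}$.

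\textbf{Step 3: Defining $M_i$ and showing disjointness.} With the $b_i$ in hand, the sets $M_i$ are defined exactly as in the statement: $M_i$ consists of those $m$ such that $a$ and $b_i$ agree on the entire ``cone'' $\{mk^\lambda + r : \lambda\in\N, 0\le r<k^\lambda\}$. Disjointness is almost immediate: if $m\in M_i\cap M_j$ then $b_i$ and $b_j$ agree on the whole cone above $m$, in particular $b_i(mk^\lambda+r) = b_j(mk^\lambda+r)$ for all $\lambda,r$; since $b_i,b_j$ are automatic and primitive, agreeing on a full cone forces them to be equal as sequences (a cone $\{mk^\lambda+r\}$ contains a full residue class structure, and from equality there one propagates to all $n$ using primitivity/minimality), so $i=j$. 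I would phrase this via the minimality of the underlying automata: agreement on a cone means the subautomaton-states are $\tau$-indistinguishable, hence equal in the minimal automaton.

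\textbf{Step 4: Densities of the $M_i$ and of $M_0$.} This is the main technical heart. The set $M_i$ can be described automaton-theoretically: $m\in M_i$ iff reading $(m)_k$ in a suitable product automaton (tracking the state of $\mathcal{A}$ against the ``expected'' state if we were following $b_i$) lands in a good set of states and, moreover, all further reading stays consistent — which amounts to $m$ being such that $\delta(q_0,(m)_k^{\text{padded}})$ lies in (a coset of) the component defining $b_i$ and the synchronizing behaviour has kicked in. Thus $M_i$ is itself (up to a sparse error) a $k^\ell$-automatic set, and by the Cobham logarithmic-density result (Theorem 7 of \cite{Cobham1972}) its logarithmic density exists. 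Positivity follows because the component defining $b_i$ is reachable, so a positive-logarithmic-density set of $m$ reaches it — here one uses that reachable states in an automaton carry positive logarithmic density of prefixes (again essentially Cobham, or a direct counting argument on the branching structure). For $M_0 = \N\setminus\bigcup_i M_i$: an integer fails to be in any $M_i$ only if its long prefixes never synchronize into a final component's ``stable'' behaviour; but since every sufficiently long word eventually enters a final strongly connected component and, by the primitivity gained after base change, the iterates synchronize, the set of $m$ whose prefixes avoid this is contained in a set with a forbidden digit pattern in base $k^\ell$, hence has density (indeed upper Banach density) $0$ by Lemma~\ref{le_0_to_banach0} and \cite[Theorem 9]{Cobham1972}. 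Once the density of $M_0$ is $0$, Lemma~\ref{le_log0_to_0} is not even needed in that direction, but it confirms consistency of the two notions.

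\textbf{Main obstacle.} The delicate point is Step 4, specifically making precise the claim that $M_i$ is ``almost automatic'' and that $M_0$ is genuinely a sparse (missing-digit) set. The subtlety is that membership in $M_i$ is an infinitary condition (agreement on an entire cone $\{mk^\lambda+r\}$), not a finite-prefix condition, so one must argue that once the automaton trajectory has entered the final primitive component in the ``right phase'', agreement on the whole cone is automatic — this is where prolongability and primitivity of the $b_i$ (after the base change to $k^\ell$) are essential, since they guarantee the tail of $b_i$ is determined by, and matched by, the tail of $a$. Handling the interface between components (transient states, multiple final components, periods dividing $\ell$) is the bookkeeping that will consume most of the real work; I expect the structural results on $\mathcal{X}(\mathcal{A})$, the column number $c(\mathcal{A})$, and synchronizing words from Definition~\ref{def_minset} to be exactly the tools that organize this bookkeeping cleanly.
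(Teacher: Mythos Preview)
Your overall architecture is right, and you correctly anticipate that the column number $c(\mathcal A)$, the sets $\mathcal X(\mathcal A)$, and minimizing words are the decisive tools. But you invoke them only as ``bookkeeping'' in Step~4, whereas in the paper they are what \emph{defines} the family $\mathcal B$ in Step~2. This mislocation produces a genuine gap in Steps~2 and~3.

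\textbf{The disjointness argument in Step~3 is incorrect.} You argue that if $m\in M_i\cap M_j$ then $b_i$ and $b_j$ agree on the cone above $m$, and ``since $b_i,b_j$ are automatic and primitive, agreeing on a full cone forces them to be equal as sequences.'' That inference fails: agreement on a cone gives arbitrarily long common factors, hence (by minimality of the associated subshifts) $X_{b_i}=X_{b_j}$, but two distinct points of the same minimal subshift can certainly share arbitrarily long factors without being equal. Concretely, if $q,q'$ are two states in the same final component $F$ with $\delta(q,0)=q$, $\delta(q',0)=q'$, and some word $w$ has $\delta(q,w)=\delta(q',w)$, then $a_q$ and $a_{q'}$ agree on the entire cone above $[w]_k$ yet are different sequences. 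So with an arbitrary choice of initial states your $M_i$ need not be disjoint.

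\textbf{The construction in Step~2 is underspecified in exactly the way that causes this.} If you take one $0$-fixed state per final component, disjointness is fine but $M_0$ will typically have positive density (whenever some $c(F_i)>1$). If you take all $0$-fixed states, the $M_i$ overlap. The paper's resolution is to pick, for each final component $F_i$, a single minimal image set $M_0^{(i)}\in\mathcal X(F_i)$ all of whose elements are fixed by $\delta(\cdot,0)$ (such a set exists after the base change you describe), and to take as initial states precisely the $c(F_i)$ elements of $M_0^{(i)}$. Because $|\delta(M_0^{(i)},w)|=c(F_i)$ for every $w$, the map $\delta(\cdot,w)$ is injective on $M_0^{(i)}$; combined with minimality of the original automaton (so that $m\in M_{i,j}$ iff $\delta(q_0,(m)_k)=\delta(q_0^{(i,j)},(m)_k)$), this gives disjointness directly. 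The same structure drives the $M_0$ argument: a word $\bfw_1\bfw_2$ with $\bfw_1$ forcing entry into some $F_i$ and $\bfw_2$ minimizing for all $F_i$ guarantees that $\delta(q_0,(m)_k)\in\delta(M_0^{(i)},(m)_k)$ whenever $(m)_k$ contains $\bfw_1\bfw_2$, so $m\notin M_0$, and $M_0$ sits inside a missing-digit set. In short, the column-number machinery is not cleanup for Step~4 but the very mechanism that makes Steps~2--4 go through simultaneously.
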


This proposition will allow us to approximate an automatic sequence $(a(n))_{n\in \mathbb{N}}$ by the 
primitive automatic sequences $(b_i(n))_{n\in \mathbb{N}}$. 
\footnote{A quite similar result can be found in~\cite{Byszewski2019}. They show that the minimal components of a subshift corresponding to a $k$-automatic sequence are given by primitive and prolongable $k^{\ell}$-automatic sequences. This allows us to cover the sequence $(a(n))_{n\geq 0}$ by arbitrary shifts of these finitely many $k^{\ell}$-automatic sequences. Since we are ultimately interested in (possibly) sparse subsequences, we need to avoid these shifts. This is exactly achieved by Proposition~\ref{prop:decomposition}, while also giving some information about how each sequence $b_i$ covers $a$.}
We start by proving an auxiliary result, 
which shows that the $M_i$ are $k$-automatic sets, i.e. the indicator function is $k$-automatic. 

\begin{lemma}\label{le_M_i_automatic}
	Let $k \geq 2$ and $(a(n))_{n\geq 0}, (b(n))_{n\geq 0}$ be $k$-automatic sequences. Then so is $(c(n))_{n\geq 0}$, where
	\begin{align*}
		c(n) = \left\{\begin{array}{cl} 1, & \mbox{if } a(nk^{\lambda}+s) = b(nk^{\lambda}+s) \mbox{ for all } \lambda \geq 0, 0\leq s < k^{\lambda}\\ 0, & \mbox{otherwise.} \end{array}\right. 
	\end{align*}
\end{lemma}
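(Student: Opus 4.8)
The plan is to realize $c(n)$ as the output of a DFAO whose states keep track of enough information about the pair $(a,b)$ to decide whether \emph{all} descendants $nk^\lambda + s$ of $n$ agree. The key observation is that the predicate defining $c(n)=1$ is hereditary along the base-$k$ tree: $c(n)=1$ holds if and only if $a(n)=b(n)$ and $c(kn+j)=1$ for every digit $j\in\Sigma_k$. So I want to work with pure automata for $a$ and $b$ and track the pair of current states together with a bit recording whether a mismatch has ever been (or will ever be) forced.

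Concretely, let $\cA=(Q_a,\Sigma_k,\delta_a,q_0^a)$ and $\cB=(Q_b,\Sigma_k,\delta_b,q_0^b)$ be pure automata (with outputs $\tau_a,\tau_b$) producing $a$ and $b$; we may assume both are minimal. Consider the product automaton on state set $Q_a\times Q_b$ with the diagonal transition $\delta((p,q),j)=(\delta_a(p,j),\delta_b(q,j))$. For a pair $(p,q)$ define it to be \emph{good} if $\tau_a(\delta_a(p,w))=\tau_b(\delta_b(q,w))$ for every $w\in\Sigma_k^*$ — equivalently, the largest set $G\subseteq Q_a\times Q_b$ that is closed under all transitions $\delta(\cdot,j)$ and on which the outputs agree; this $G$ is computable as a greatest fixed point (start with all pairs where the outputs agree and iteratively discard any pair having a successor outside the set). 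Now build the DFAO $\cC$ on the state set $(Q_a\times Q_b)\sqcup\{\bot\}$: from a state $(p,q)$, on letter $j$, go to $(\delta_a(p,j),\delta_b(q,j))$ if that pair lies in $G$, and to the absorbing sink $\bot$ otherwise; from $\bot$ stay at $\bot$. The initial state is $(q_0^a,q_0^b)$ if it lies in $G$, else $\bot$. The output function assigns $1$ to every state in $G$ and $0$ to $\bot$. One then checks that reading $(n)_k$ lands in a state of $G$ precisely when $(\delta_a(q_0^a,(n)_k),\delta_b(q_0^b,(n)_k))\in G$, i.e. precisely when all continuations agree, which by the correspondence between words $(nk^\lambda+s)_k = (n)_k w$ with $w\in\Sigma_k^\lambda$ and $s=[w]_k$ is exactly the condition $a(nk^\lambda+s)=b(nk^\lambda+s)$ for all $\lambda\ge 0$, $0\le s<k^\lambda$. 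Hence $c(n)=\tau_{\cC}(\delta_{\cC}(q_0^{\cC},(n)_k))$ and $c$ is $k$-automatic.

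The main point requiring care — and the step I expect to be the real content rather than bookkeeping — is the handling of leading zeros, so that $(n)_k$ and its zero-padded versions lead to the same answer. The set condition "for all $\lambda\ge 0$, $0\le s<k^\lambda$" is stated in terms of the integer $n$, not its representation, so I must ensure the automaton is insensitive to leading zeros; this is where minimality (or at least the reduced/accessible form) of $\cA$ and $\cB$ is used, giving $\delta_a(q_0^a,0)=q_0^a$ in the relevant sense after passing to the canonical reduced automaton — alternatively invoke the standard fact (\cite[Theorem 4.1.9 / Section 4.1]{Allouche2003}) that every automatic sequence has an automaton reading $(n)_k$ with a $0$-loop at the start, or simply note that the predicate only depends on the equivalence class of states reached, which is leading-zero invariant. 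A clean way to sidestep this entirely is to define $\cC$ directly on equivalence classes: two pairs are equivalent if they have the same "future output pattern", the number of such classes is finite, and the construction above descends. I would present the greatest-fixed-point description of $G$ and the sink construction as the core argument, and relegate the leading-zero compatibility to a one-line appeal to minimality of the chosen automata for $a$ and $b$.
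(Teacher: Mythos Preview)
Your core idea is correct and is a genuinely different route from the paper's, but your explicit automaton $\cC$ has a bug that makes it compute a constant sequence rather than $c(n)$.

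The issue is this: your good set $G$ is, by construction, \emph{forward-closed} under all transitions (if $(p,q)\in G$ then every successor is in $G$). Hence in your automaton $\cC$, once you are in $G$ you never leave. Combined with your rule that the initial state is $(q_0^a,q_0^b)$ if that pair lies in $G$ and $\bot$ otherwise, the machine either starts in $G$ and stays there forever (output identically $1$), or starts at $\bot$ and stays there (output identically $0$). But $c$ is not constant in general: take $a,b$ differing only at $0$; then $(q_0^a,q_0^b)\notin G$, yet $c(n)=1$ for all $n\ge 1$. Your sentence ``one then checks that reading $(n)_k$ lands in a state of $G$ precisely when $(\delta_a(q_0^a,(n)_k),\delta_b(q_0^b,(n)_k))\in G$'' is therefore false for the machine you wrote down.

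The fix is to drop $\bot$ entirely: take the plain product automaton on $Q_a\times Q_b$ with initial state $(q_0^a,q_0^b)$ (unconditionally) and output $\tau(p,q)=\ind_{[(p,q)\in G]}$. Your greatest-fixed-point computation of $G$ is correct, and the equivalence $c(n)=1\iff (\delta_a(q_0^a,(n)_k),\delta_b(q_0^b,(n)_k))\in G$ is exactly right once you allow the run to pass through pairs outside $G$ before possibly entering $G$. With that change the argument is complete (modulo the leading-zero remark, which you handle correctly).

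For comparison, the paper argues via \emph{reverse-reading} automata: reading $nk^\lambda+s$ from the least significant digit processes $s$ first, landing at some reachable pair $(q_1,q_2)$, and then processes $n$; so $c(n)=1$ iff $a_{q_1}(n)=b_{q_2}(n)$ for every reachable pair, giving $c(n)=\prod_{(q_1,q_2)\in\mathcal S} c_{q_1,q_2}(n)$ as a finite product of automatic indicators. Your forward-reading route and the paper's reverse-reading route are dual; yours yields a single automaton directly but needs the fixed-point computation of $G$, while the paper's avoids that at the cost of a product over the reachable set.
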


Before proving Lemma \ref{le_M_i_automatic}, we recall the following definition. 

 \begin{definition}\label{def_product_automata}
  	Let $\cA^{(1)} = (Q^{(1)}, \Sigma_k, \delta^{(1)}, q_0^{(1)}), \cA^{(2)} = (Q^{(2)}, \Sigma_k, \delta^{(2)}, q_0^{(2)})$ be two $k$-DFA. Then $\cA = (Q^{(1)} \times Q^{(2)}, \Sigma_k, \delta, q_0)$ is a $k$-DFA that we call the \emph{product} of $\cA^{(1)}$ and $\cA^{(2)}$, where $\delta = \delta^{(1)} \times \delta^{(2)}, q_0 = (q_0^{(1)}, q_0^{(2)})$, i.e. $\delta((q^{(1)}, q^{(2)}), \bfw) = (\delta^{(1)}(q^{(1)}, \bfw), \delta^{(2)}(q^{(2)}, \bfw))$. 
  \end{definition}

\begin{proof}[Proof of Lemma \ref{le_M_i_automatic}] 
     Let $\cA^{(1)} = (Q^{(1)}, \Sigma_k, \delta^{(1)}, q_0^{(1)},\Delta^{(1)},\tau^{(1)})$ denote a minimal $k$-DFAO with reverse reading that produces the sequence $a(n)$, and let $\cA^{(2)} = (Q^{(2)}, \Sigma_k, \delta^{(2)}, q_0^{(2)},\Delta^{(2)},\tau^{(2)})$ denote a minimal $k$-DFAO with reverse reading that produces the sequence $b(n)$.  
     For every $q\in Q^{(1)}$, we let $a_q(n)$ (resp.\ $b_q(n)$) denote the sequence produced by $\cA^{(1)}$ 
     (resp.\ $\cA^{(2)}$) when replacing the initial state by $q$. 
     
     For every pair $(q_1,q_2)\in Q^{(1)}\times Q^{(2)}$, we define the sequence $c_{q_1,q_2}(n)$ by 
   \begin{align*}
		c_{q_1,q_2}(n) = \left\{\begin{array}{cl} 1, & \mbox{if } a_{q_1}(n) = b_{q_2}(n) \\ 0, & \mbox{otherwise.} \end{array}\right. 
	\end{align*}
Then $c_{q_1,q_2}(n)$ is $k$-automatic for it can be produced using the product of the $k$-DFA 
$\cA_{q_1}:= (Q^{(1)}, \Sigma_k, \delta^{(1)}, q_1)$ and  $\cA_{q_2}:=(Q^{(2)}, \Sigma_k, \delta^{(2)}, q_2)$ endowed with the output function $\tau$ defined by $\tau(q,p)=1$ if $\tau^{(1)}(q)=\tau^{(2)}(p)$, and $\tau(q,p)=0$ otherwise. 

Now, setting 
$$
\mathcal S := \{(q_1,q_2)\in Q^{(1)}\times Q^{(2)} : \exists \bfw \in \Sigma_k^* \mbox{ such that } \delta^{(1)}(q_0^{(1)},\bfw)=q_1 \mbox{ and } 
\delta^{(2)}(q_0^{(2)},\bfw)=q_2\} \,,
$$
we get that 
$$
c(n)=\prod_{(q_1,q_2)\in \mathcal S} c_{q_1,q_2}(n) \,.
$$
Hence $c(n)$ is $k$-automatic as a finite product of integer-valued $k$-automatic sequences. 
\end{proof}

\begin{remark}\label{re_i,j}
	The stated Proposition is in a form that is oriented towards applicability. However, for the proof 
	we will use a different description of $\mathcal{B}$, 
	i.e., $\mathcal{B} = \{b_{i,j}: 1\leq i \leq r, 1\leq j \leq c_i\}$, 
	where $r$ denotes the number of different final components and $c_i$ the column number of the 
	$i$-th final component.
\end{remark}

\begin{proof}[Proof of Proposition~\ref{prop:decomposition}]
We start by noting that by Lemma~\ref{le_M_i_automatic}, the indicator functions of the $M_i$ are automatic and, 
thus, the logarithmic densities of the $M_i$ exist. Assume now that $m \in M_{i}$ and $k^{\ell-1} \leq m < k^{\ell}$, 
then we have that $m k^{r} + n \in M_{i}$ for all $r \in \N, n < k^{r}$. Hence a simple computation shows that the logarithmic density of $M_{i}$ is at least $\frac{\log(m+1) - \log(m)}{\log(k^{r})}$.
Thus it only remains to show that we can choose the $b_i$ in such a way that the $M_i, 1\leq i \leq s$ are disjoint and $M_0$ has upper Banach density $0$.

 We can assume without loss of generality that $(a(n))_{n\geq 0}$ is minimal and for any $q \in Q$,
  \begin{align}\label{eq_contract_loops}
  	(\exists n \in \N: \delta(q,0^n) = q) \qquad \Rightarrow \qquad (\delta(q,0) = q),
  \end{align}
  as we can change $\Sigma_k = \{0,\ldots,k-1\}$ to $\Sigma_{k^\ell} = \{0,\ldots,k^{\ell}-1\}$. We consider now the final components which we call $F_1, F_2, \ldots, F_r$. First we claim that for any final component $F_i$ there exists some set $M_0^{(i)} \in \mathcal{X}(F_i)$\footnote{The set $\mathcal{X}$ was introduced in Definition~\ref{def_minset}.} such that every element of $M_0^{(i)}$ is fixed under $\delta(.,0)$. 
  Fix any $i$, $1 \leq i \leq r$, and take some set $M \in \mathcal{X}(F_i)$. We consider now $M_j := \delta(M, 0^j)$, and one sees easily that $M_j \in \mathcal{X}(F_i)$. As $\mathcal{X}(F_i)$ is finite, there exists some $M_0^{(i)}$ for which there exists some $\ell$ with $\delta(M_0^{(i)}, 0^{\ell}) = M_0^{(i)}$. Thus we see that $\delta(.,0^{\ell})$ is a bijection from $M_0^{(i)}$ to itself. Therefore, we know that a properly chosen power is the identity, i.e. $\delta(q,0^{\ell'})=q$ for all $q \in M_0^{(i)}$ and the claim follows by \eqref{eq_contract_loops}.
  
  This shows in particular that all the $F_i$ are primitive by Lemma~\ref{le_str_connected_primitive}. 
  Now we are able to define the sequences $b_{i,j}(n)$. For every final component $F_i$ we define 
  $c_i := c(F_i)$ many different automatic sequences corresponding to the automata 
  $(F_i, q_0^{(i,j)}, \Sigma_{k^{\ell'}}, \delta\restriction_{F_i}, \tau\restriction_{F_i})$ 
  for every $q_0^{(i,j)} \in M_0^{(i)}$. We call the corresponding automatic sequences $b_{i,j}(n)$ 
  and the corresponding automata $\mathcal{B}_{i,j}$. 
  We note that $\delta(q_0, (m)_k) = \delta(q_0^{(i,j)}, (m)_k)$ 
  if and only if $m \in M_{i,j}$, by the minimality of $(a(n))_{n\geq 0}$. 
Thus we see directly that all the $M_{i,j}$ have to be disjoint. Indeed, 
let us assume that $m \in M_{i_1, j_1} \cap M_{i_2, j_2}$, which can only happen if 
$\delta(q_0^{(i_1, j_1)}, (m)_k) = \delta(q_0^{(i_2, j_2)}, (m)_k)$. This can clearly be only the case if 
$i_1 = i_2 =: i$, as the final components are disjoint. 
However, this would also imply that $\abs{\delta(M_0^{(i)}, (m)_k)} < c(F_i)$ which gives a contradiction.

 It only remains to show that the (upper Banach) density of $M_0$ equals $0$.
We find by~\cite[Lemma 3.1]{Byszewski2020} that there exists a word $\bfw_1 \in \Sigma_k^{*}$ such that if $\bfv \in \Sigma_k^{*}$ contains $\bfw_1$ as a factor then $\delta(q_0, \bfv)$ belongs to a strongly connected component of $\mathcal{A}$, i.e. one of the $F_i$.
Next we find a word $\bfw_2$ that is minimizing for all the $F_i$. Therefore, we can take for example the concatenation of words that are minimizing for a single $F_i$.
Next we aim to show that if $\bfv \in \Sigma_k^{*}$ contains $\bfw := \bfw_1 \bfw_2$ as a subword, 
then there exists $i,j$ such that $\delta(q_0, \bfv) = \delta(q_0^{(i,j)}, \bfv)$, i.e. $[\bfv]_k \notin M_0$.
We note that we can split $\bfv = \bfv_1 \bfv_2$ such that $\bfw_1$ is a subword of $\bfv_1$ and $\bfw_2$ is a 
subword of $\bfv_2$.
The defining property of $\bfw_1$ ensures that there exists some $i$ such that $\delta(q_0, \bfv_1) \in F_i$.
As $\bfw_2$ (and therefore also $\bfv_2$) is minimizing for $F_i$, we have $M^{(i)} := \delta(F_i, \bfv_2) \in \mathcal{X}(F_i)$ such that $\delta(q_0, \bfv) \in M^{(i)}$. Moreover, we find by the properties of $\mathcal{X}(F_i)$ that $\delta(M_0^{(i)}, \bfv) \in \mathcal{X}(F_i)$ and, therefore, $\delta(M_0^{(i)}, \bfv) = M^{(i)}$. Thus, there exists $q_0^{(i,j)} \in M_0^{(i)}$ such that $\delta(q_0, \bfv) = \delta(q_0^{(i,j)}, \bfv)$.

Thus $M_0$ is contained in a set with a missing digit 
(i.e. the set of numbers whose base $k^{\abs{\bfv}}$ expansions do not contain any occurrences of $\bfv$) and, thus,  
its upper Banach density is $0$.
\end{proof}

\begin{remark}\label{re_decomposition_b_i}
	We discuss here shortly how to determine the $b_{i,j}$ and the $M_{i,j}$ that appear in the proof of Proposition~\ref{prop:decomposition} (also recall Remark~\ref{re_i,j}).
	Given a pure $k$-automatic sequence $a(n)$ with corresponding automaton $\mathcal{A} = (Q, \Sigma_k, \delta, q_0)$.
	We first ensure that~\eqref{eq_contract_loops} holds by possibly changing $\Sigma_k$ to $\Sigma_{k^{\ell}}$ for some $\ell \geq 1$.
	Then we determine the final components $F_i, 1\leq i \leq s$ of $\mathcal{A}$.
	The proof of Proposition~\ref{prop:decomposition} ensures that for every $F_i$ there exists some $M_0^{(i)} \in \mathcal{X}(F_i)$ such that every element of $M_0^{(i)}$ is fixed by $\delta(.,0)$.
	This allows us to define the $b_{i,j}$ as the automatic sequence corresponding to the automaton $(Q, \Sigma, \delta, q_0^{(i,j)})$ where $q_0^{(i,j)} \in M_0^{(i)}$.
	
	We have seen in the proof of Proposition~\ref{prop:decomposition} that 
	\begin{align*}
		m \in M_{i,j} \text{ if and only if } 
		\delta(q_0, (m)_k) = \delta(q_0^{(i,j)}, (m)_k).
	\end{align*}
	Thus, we can actually just consider the $k$-automatic sequence $((a(n), b_{i,j}(n)))_{n\geq 0}$ (see Definition~\ref{def_product_automata}) and see that the indicator function of $M_{i,j}$ is the just the projection of the previous sequence where $\tau((x,y)) = \ind_{[x = y]}$.
\end{remark}

  \section{Transfer of densities}\label{sec:transfer}
  In this section we prove Theorem~\ref{Th1} which allows to compute the logarithmic density of a general automatic sequence along a subsequence $(n_{\ell})_{\ell\in\N}$ when knowing the density of primitive automatic sequences along the same subsequence. The main ingredient is the structural result we discussed in the previous section, Proposition~\ref{prop:decomposition}.
  Furthermore, it is in this context very useful to use summation by parts.
 
  \begin{lemma}\label{le_partial_summation}
  	Let $(a_n), (b_n)$ be two sequences of complex numbers.
  	Then
  	\begin{align*}
  		\sum_{n=0}^{N} a_n b_n = b_{N} \sum_{n=0}^{N} a_n + \sum_{n=0}^{N-1} (b_n - b_{n+1}) \sum_{\ell = 0}^{n} a_{\ell} .
  	\end{align*}
  \end{lemma}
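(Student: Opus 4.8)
The plan is to recognize this as the classical Abel summation formula and to prove it by introducing the sequence of partial sums and performing a single index shift. First I would set $A_n := \sum_{\ell=0}^{n} a_\ell$ for $n \ge 0$ and adopt the empty-sum convention $A_{-1} := 0$, so that $a_n = A_n - A_{n-1}$ for every $n \ge 0$. Substituting this into the left-hand side gives
\[
\sum_{n=0}^{N} a_n b_n = \sum_{n=0}^{N} (A_n - A_{n-1}) b_n = \sum_{n=0}^{N} A_n b_n - \sum_{n=0}^{N} A_{n-1} b_n .
\]

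Next I would reindex the second sum by replacing $n$ with $n+1$: since $A_{-1} = 0$, we have $\sum_{n=0}^{N} A_{n-1} b_n = \sum_{n=0}^{N-1} A_n b_{n+1}$. Combining the two sums and peeling off the top term $A_N b_N$ from the first yields
\[
\sum_{n=0}^{N} a_n b_n = A_N b_N + \sum_{n=0}^{N-1} A_n (b_n - b_{n+1}) ,
\]
which is precisely the asserted identity once one unfolds $A_N = \sum_{n=0}^{N} a_n$ and $A_n = \sum_{\ell=0}^{n} a_\ell$. Alternatively, the statement can be verified by a one-line induction on $N$: the case $N = 0$ is trivial, and the inductive step amounts to adding $a_{N+1} b_{N+1}$ to both sides and checking the elementary identity $b_{N+1}\sum_{n=0}^{N+1} a_n - b_N \sum_{n=0}^{N} a_n = a_{N+1} b_{N+1} + (b_{N+1} - b_N)\sum_{n=0}^{N} a_n$.

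I do not expect any genuine obstacle here: the result is a purely formal rearrangement of a finite sum, valid for arbitrary complex sequences, and the only point requiring (minimal) care is the bookkeeping of the index shift together with the convention $A_{-1} = 0$.
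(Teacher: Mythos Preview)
Your proof is correct; this is exactly the standard derivation of Abel's summation formula via partial sums and an index shift. The paper itself states this lemma without proof, treating it as a well-known identity, so there is nothing further to compare.
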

 
  We also need the estimate 
  \begin{align}\label{eq_harmonic_sum}
  	\abs{\sum_{\ell = a+1}^{b} \frac{1}{\ell} - \log\rb{\frac{b}{a}}} \leq \frac{1}{a} \,,
  \end{align}
  which follows from comparing the sum with $\int_{a}^{b} \frac{1}{x} dx$.
To prove Theorem~\ref{Th1}, we are interested in computing
\begin{align*}
  \lim_{L \to \infty} \frac{1}{\log(L)}\sum_{\ell<L} \frac{1}{\ell} \inda = \lim_{N \to \infty} \frac{1}{\log(g(N))} \sum_{\ell: n_{\ell}\leq N} \frac{1}{\ell} \inda.
\end{align*}

  Proposition~\ref{prop:decomposition} allows us to estimate parts of the sum.  Fix some $m<k^{\nu-\lambda}, m \in M_i$, 
  we find
  \begin{align*}
    \sum_{\substack{\ell<L\\ mk^{\lambda}\leq n_\ell < (m+1) k^{\lambda}}} \frac{1}{\ell} \inda = \sum_{\substack{\ell<L\\ mk^{\lambda}\leq n_\ell < (m+1) k^{\lambda}}} \frac{1}{\ell} \indbi.
  \end{align*}
  
  \begin{lemma}\label{le:density}
    Let $(b(n_{\ell}))_{\ell\geq 0}$ be such that the density of $\alpha$ exists, i.e.
    \begin{align*}
      \lim_{L\to\infty} \frac{1}{L} \sum_{\ell \leq L} \indb = d_b(\alpha).
    \end{align*}
    Then,
    \begin{align*}
    	\sum_{x\leq \ell \leq y} \frac{1}{\ell} \indb = \log(y/x) d_b(\alpha) + o_{x \to \infty}(1 + \log(y/x)).
    \end{align*}
  \end{lemma}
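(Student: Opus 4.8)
The plan is to apply summation by parts (Lemma~\ref{le_partial_summation}) to the sequences $a_\ell = \indb$ and $b_\ell = 1/\ell$, restricted to the range $x \le \ell \le y$. Writing $S(n) := \sum_{x \le \ell \le n} \indb$ for the partial counting sums, the hypothesis gives $S(n) = d_b(\alpha)(n - x) + o(n) + O(1)$ uniformly as $n \to \infty$; more precisely, since the density exists, for any $\varepsilon > 0$ there is $N_0$ with $\abs{S(n) - d_b(\alpha)(n-x)} \le \varepsilon n + C_\varepsilon$ for all $n \ge x$, where one also has to absorb the error coming from the interval $[0,x)$. Then summation by parts rewrites $\sum_{x \le \ell \le y} \frac{1}{\ell}\indb$ as a main term $\frac{1}{y} S(y)$ plus $\sum_{x \le n < y} \rb{\frac1n - \frac1{n+1}} S(n) = \sum_{x \le n < y} \frac{1}{n(n+1)} S(n)$.

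The second step is to substitute the asymptotic $S(n) = d_b(\alpha)(n-x) + E(n)$ with $\abs{E(n)} \le \varepsilon n + C_\varepsilon$ into both pieces. The boundary term $\frac1y S(y) = d_b(\alpha)(1 - x/y) + O(\varepsilon + C_\varepsilon/y)$ is bounded. For the sum, the main contribution is
\begin{align*}
  d_b(\alpha) \sum_{x \le n < y} \frac{n-x}{n(n+1)} = d_b(\alpha) \rb{\sum_{x \le n < y} \frac{1}{n+1} - x\sum_{x \le n < y}\frac{1}{n(n+1)}},
\end{align*}
and using \eqref{eq_harmonic_sum} the first inner sum is $\log(y/x) + O(1)$ while the telescoping sum $\sum_{x \le n < y}\frac{1}{n(n+1)} = \frac1x - \frac1y \le \frac1x$ contributes $O(1)$ after multiplying by $x$. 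So the main term is $d_b(\alpha)\log(y/x) + O(1)$. The error contribution is $\sum_{x \le n < y} \frac{\abs{E(n)}}{n(n+1)} \le \varepsilon \sum_{x \le n < y}\frac{1}{n+1} + C_\varepsilon \sum_{x \le n < y}\frac1{n(n+1)} \le \varepsilon(\log(y/x) + O(1)) + C_\varepsilon/x$.

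The final step is to package these estimates into the claimed form $o_{x\to\infty}(1 + \log(y/x))$. Combining everything, the difference $\sum_{x\le\ell\le y}\frac1\ell\indb - d_b(\alpha)\log(y/x)$ is bounded by $\varepsilon\log(y/x) + O_\varepsilon(1)$; dividing by $1 + \log(y/x)$ and letting $x \to \infty$ (so that $C_\varepsilon/x \to 0$ and the $O_\varepsilon(1)$ term divided by $1+\log(y/x)$ is at most $O_\varepsilon(1)$ — wait, this needs care) shows the ratio is at most $\varepsilon + o(1)$, and since $\varepsilon$ is arbitrary the claim follows. The one delicate point, and the main obstacle, is the uniformity in $y$: the error term $C_\varepsilon$ does not go to zero, so $O_\varepsilon(1)/(1 + \log(y/x))$ is only small when $\log(y/x)$ is large, not when $y$ is close to $x$. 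But when $y$ is close to $x$ one instead uses that $S(y) - S(x)$ is a sum of at most $y - x$ terms each $\le 1$ and each weight $1/\ell \le 1/x$, giving the trivial bound $\sum_{x\le\ell\le y}\frac1\ell\indb \le (y-x+1)/x$, which combined with $\log(y/x) \ge (y-x)/y \ge c(y-x)/x$ for $y$ in a bounded ratio of $x$ handles that regime; splitting into the two cases $\log(y/x) \le T$ and $\log(y/x) > T$ for a large constant $T$ depending on $\varepsilon$ reconciles the two bounds and yields the stated $o_{x\to\infty}(1 + \log(y/x))$ uniformly in $y \ge x$.
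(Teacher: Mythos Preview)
Your approach via partial summation is exactly the paper's, but the execution creates a spurious obstacle and your patch for it does not work.

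Two places where you lose precision. First, the constant $C_\varepsilon$ is unnecessary: once $x$ exceeds the threshold $x_0(\varepsilon)$ at which $\bigl|\tfrac{1}{n}\sum_{\ell\le n}\indb - d_b(\alpha)\bigr|\le\varepsilon$ holds, you get $|E(n)|\le \varepsilon n + \varepsilon(x-1)\le 2\varepsilon n$ directly, with no additive constant. Second, your ``$O(1)$'' in the main term is actually $O(1/x)$: the boundary contribution $d_b(\alpha)(1-x/y)$ cancels exactly against the telescoping piece $-d_b(\alpha)\, x\sum_{x\le n<y}\tfrac{1}{n(n+1)} = -d_b(\alpha)(1-x/y)$, and the harmonic-sum estimate \eqref{eq_harmonic_sum} gives $\sum_{x\le n<y}\tfrac{1}{n+1}=\log(y/x)+O(1/x)$, not merely $+O(1)$. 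With these two corrections your bound becomes $2\varepsilon(1+\log(y/x))+O(1/x)$, which is exactly the desired $o_{x\to\infty}(1+\log(y/x))$ since $\varepsilon$ is arbitrary.

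Your final paragraph, by contrast, does not rescue the weaker bound $\varepsilon\log(y/x)+O_\varepsilon(1)$. In the regime $\log(y/x)\le T$ the trivial estimate gives only $|\text{difference}|\le C\log(y/x)+O(1/x)\le CT+o(1)$, and dividing by $1+\log(y/x)\ge 1$ still leaves a term of size $CT$ that does not tend to $0$ as $x\to\infty$; so the case split fails. The paper sidesteps all of this by applying partial summation to the centered sequence $\indb - d_b(\alpha)$, so that the ``main term'' is just $d_b(\alpha)\sum_{x\le\ell\le y}\tfrac{1}{\ell}=d_b(\alpha)\log(y/x)+O(1/x)$ via \eqref{eq_harmonic_sum}, and the partial-summation remainder is immediately bounded by $2\varepsilon+2\varepsilon\sum_{x<\ell\le y}\tfrac{1}{\ell}$.
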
  
  \begin{proof}
  Fix $\varepsilon>0$ and let $x_0$ be large enough, such that for any $x\geq x_0$ we have
    \begin{align*}
      \abs{\frac{1}{x}\sum_{\ell \leq x} \indb - d_b(\alpha)} \leq \varepsilon.
    \end{align*}
    By partial summation, i.e. Lemma~\ref{le_partial_summation}, we find 
    \begin{align*}
      \sum_{x\leq \ell \leq y} c_{\ell} \frac{1}{\ell} &= \sum_{x\leq \ell \leq y} c_{\ell} \frac{1}{y} + \sum_{x\leq n < y} \sum_{x\leq \ell \leq n} c_{\ell} \rb{\frac{1}{n} - \frac{1}{n+1}}\\
        &= \frac{1}{y} \sum_{x\leq \ell \leq y} c_{\ell} + \sum_{x\leq n < y} \frac{1}{n(n+1)} \sum_{x\leq \ell \leq n} c_{\ell}.
    \end{align*}
	This gives
	\begin{align*}
		\abs{\sum_{x\leq \ell \leq y} \frac{1}{\ell} (\indb - d_{b}(\alpha))}&\leq \frac{1}{y} \abs{\sum_{x\leq \ell \leq y} \indb - d_{b}(\alpha)}\\
			&\qquad + \sum_{x\leq n < y} \frac{1}{n(n+1)}\abs{\sum_{x\leq \ell \leq n} \indb - d_b(\alpha)}\\
			&\leq \varepsilon \frac{x-1+y}{y} + \varepsilon \sum_{x\leq n < y} \frac{x-1+n}{n(n+1)}\\
			&\leq 2 \varepsilon + 2 \varepsilon \sum_{x< \ell \leq y} \frac{1}{\ell} \cdot
	\end{align*}
	The result follows by applying~\eqref{eq_harmonic_sum} twice.
  \end{proof}

  \begin{proof}[Proof of (i) of Theorem~\ref{Th1}]
  Naturally, we are interested in estimating
  \begin{align}\label{eq_d_log_def}
  	d_{\log}(a(n_{\ell}), \alpha) = \lim_{L \to \infty} \frac{1}{\log(L)} \sum_{1\leq \ell \leq L}\frac{1}{\ell} \inda \,.
  \end{align}
  Actually, we aim to show, with the notation from Proposition~\ref{prop:decomposition}, that 
  \begin{align}\label{eq_d_log}
  	d_{\log}(a(n_{\ell}), \alpha) = \sum_{1\leq i \leq s} d_{\log}(M_i) \cdot d(b_i(n_{\ell}), \alpha).
  \end{align}
    
    We note that the limit in \eqref{eq_d_log_def} is invariant under multiplying $L$ by a bounded constant. 
    This means, it is sufficient to consider only a subsequence $(L_{\nu})_{\nu \in \N}$, where $L_{\nu+1}/L_{\nu} $ is bounded.  
    In particular, we can choose $L_{\nu} = g(k^{\lambda \nu})$, as $g(k^{\lambda (\nu+1)})/g(k^{\lambda \nu}) \to k^{\lambda \beta}$ for any $\lambda \in \N_{\geq 1}$.
    Moreover, Lemma~\ref{le_log_g} shows that we can replace $\log(g(k^{\lambda \nu}))$ by $\log(k^{\beta \lambda \nu})$ in \eqref{eq_d_log_def}.
   
   On the other hand,  we find
    \begin{align*}
      \frac{1}{\log(k^{\beta \lambda \nu})} \sum_{\ell \in \mathbb{N}: n_{\ell} < k^{\lambda \nu}} \frac{1}{\ell} \inda 
      &= \frac{1}{\log(k^{\beta \lambda \nu})} \sum_{1\leq m <k^{\lambda}} \sum_{j<\nu} \sum_{\substack{\ell \in \mathbb{N}\\mk^{\lambda j} \leq n_{\ell} < (m+1)k^{\lambda j}}} \frac{1}{\ell} \inda\\
      &= \frac{1}{\log(k^{\beta \lambda \nu})} \sum_{0 \leq i \leq s} \sum_{\substack{1\leq m <k^{\lambda}\\ m \in M_i}} \sum_{j<\nu} \sum_{\substack{g(mk^{\lambda j}) \leq \ell < g((m+1)k^{\lambda j})}} \frac{1}{\ell} \inda \,.
    \end{align*}
  Thus we are interested in computing 
  \begin{align*}
    \sum_{\substack{\ell \in \mathbb{N}\\mk^{\lambda j} \leq n_{\ell} < (m+1)k^{\lambda j}}} \frac{1}{\ell} \indbi = \sum_{g(mk^{\lambda j}) < \ell \leq g((m+1)k^{\lambda j})} \frac{1}{\ell} \indbi
  \end{align*}
  for $m \in M_i$. By applying Lemma~\ref{le:density} and Lemma~\ref{le_log_g} we find,
  \begin{align*}
  	\sum_{\ell = g(mk^{\lambda j})+1}^{g((m+1)k^{\lambda j})} \frac{1}{\ell} \indbi = \beta \log\rb{1 + \frac{1}{m}} \cdot d(b_i(n_{\ell}), \alpha)  +  o_{g(m k^{\lambda j}) \to \infty}\rb{ 1 + \beta \log\rb{1 + \frac{1}{m}} }.
  \end{align*}
	As $m \geq 1$, we have that $\log\rb{1 + \frac{1}{m}}$ is bounded.
	Furthermore, $m k^{\lambda j} \geq k^j$, which allows us to replace $g(m k^{j\lambda}) \to \infty$ by $j\to \infty$.
	This gives in total,
	\begin{align*}
		\sum_{\ell = g(mk^{\lambda j})+1}^{g((m+1)k^{\lambda j})} \frac{1}{\ell} \indbi = \beta \log\rb{1 + \frac{1}{m}} \cdot d(b_i(n_{\ell}), \alpha) + o_{j\to \infty}(1).
	\end{align*}
 
 A simple computation yields,
  \begin{align*}
    \abs{\log\left(1 + \frac{1}{m}\right) - \frac{1}{m}} \leq \frac{1}{m^2},
  \end{align*}
  which gives for every $1\leq m \leq k^{\lambda}$,
  \begin{align*}
    \sum_{\ell = g(mk^{\lambda j})+1}^{g((m+1)k^{\lambda j})} \frac{1}{\ell} \indbi = \beta \frac{1}{m} \cdot d(b_i(n_{\ell}), \alpha) + O^{*}\rb{\frac{1}{m^2}} + o_{j \to \infty}(1).
  \end{align*}
  This gives
  \begin{align*}
    \sum_{\substack{1\leq m <k^{\lambda}\\ m \in M_i}} &\sum_{j<\nu} \sum_{\substack{\ell \in \mathbb{N}\\mk^{\lambda j} \leq n_{\ell} < (m+1)k^{\lambda j}}} \frac{1}{\ell} \indbi\\
     &= \sum_{\substack{1\leq m <k^{\lambda}\\ m \in M_i}} \sum_{j<\nu} \rb{d(b_i(n_{\ell}), \alpha) \frac{\beta}{m} + O^{*}\rb{\frac{1}{m^2}} + o_{j \to\infty}(1)}\\
       	&= \nu \cdot d(b_i(n_{\ell}), \alpha) \cdot \rb{\sum_{\substack{1\leq m < k^{\lambda}\\m \in M_i}} \frac{\beta}{m} + O^{*}\rb{\frac{1}{m^2}}} + o_{\nu \to \infty} (\nu k^\lambda).
  \end{align*}
  We note that
  \begin{align*}
  	\sum_{\substack{1 \leq m \leq k^{\lambda}\\ m \in M_i}} \frac{1}{m} = \lambda \log(k) d_{\log}(M_i) + o_{\lambda \to \infty} (\lambda),
  \end{align*}
  and as $\sum 1/m^2$ is bounded we have in total
  \begin{align*}
  	\sum_{\substack{1\leq m <k^{\lambda}\\ m \in M_i}} &\sum_{j<\nu} \sum_{\substack{\ell \in \mathbb{N}\\mk^{\lambda j} \leq n_{\ell} < (m+1)k^{\lambda j}}} \frac{1}{\ell} \indbi\\
     &= \nu \lambda \log(k) \cdot \beta \cdot d_{\log}(M_i) d(b_i(n_{\ell}), \alpha) + o_{\lambda \to \infty} (\nu \lambda) + o_{\nu \to \infty} (\nu k^\lambda).
  \end{align*}
  We note that for any $\lambda \in \N$ we have $o_{\nu \to \infty} (\nu k^{\lambda}) = o_{\nu \to \infty}(\nu \lambda)$.
  Thus, letting first $\nu \to \infty$ and then $\lambda \to \infty$ gives \eqref{eq_d_log}, as wanted.
    \end{proof}

  \begin{proof}[Proof of (ii) of Theorem~\ref{Th1}]
  	Consider some large $\lambda$ and
  	define for $N \geq k^{\lambda}$ an integer $\nu$ such that $k^{\nu+\lambda-1} \leq N < k^{\nu+\lambda}$ and $m_0 \in [k^{\lambda-1},k^{\lambda}]$ such that $m_0 k^{\nu} \leq N < (m_0+1) k^{\nu}$.

  	We are interested in computing
  	\begin{align}\label{eq_density_a}
  		\lim_{N\to \infty}\frac{1}{g(N)} \sum_{\ell \leq g(N)} \inda.
  	\end{align}
  	Changing $N$ to $m_0 k^{\nu}$ changes the limit in two ways.
  	The first contribution is due to the shortening of the sum and the second contribution is due to the changing of the normalizing factor. Both contributions change the value by at most
  	\begin{align*}
  		\frac{g(N) - g(m_0 k^{\nu})}{g(N)} \leq 1 - \frac{g(m_0 k^{\nu})}{g((m_0 + 1) k^{\nu})} \to_{\nu \to \infty} 1 - \rb{1 - \frac{1}{m_0 + 1}}^{\beta} \leq \frac{1}{m_0 + 1} \leq \frac{1}{k^{\lambda-1}}.
  	\end{align*}
  	
  	Therefore, we are interested in computing
  	\begin{align*}
  		\frac{1}{g(m_0 k^{\nu})} \sum_{m<m_0} \sum_{g(mk^{\nu}) \leq \ell < g((m+1) k^{\nu}} \inda.
  	\end{align*}
  	We note that for any $\lambda$, each sum between $g(m k^{\nu})$ and $g((m+1) k^{\nu})$ is of substantial length for large enough $\nu$:
  	\begin{align*}
  		\lim_{\nu \to \infty} &\frac{g((m+1) k^{\nu}) - g(m k^{\nu})}{g(m_0 k^{\nu})} = \rb{\frac{m+1}{m_0}}^{\beta} - \rb{\frac{m}{m_0}}^{\beta}\\ &\geq 1 - \rb{\frac{m_0 -1}{m_0}}^{\beta} \geq 1 - \rb{\frac{k^{\lambda}-1}{k^{\lambda}}}^{\beta} > 0.
  	\end{align*}
  	Hence, for $m \in M_i$ for some $1\leq i \leq s$ we find
  	\begin{align}\label{eq_density_interval}
  		\lim_{\nu \to \infty}\frac{1}{g(m_0 k^{\nu})} \sum_{g(mk^{\nu})\leq \ell < g((m+1)k^{\nu})} \indbi = \frac{(m+1)^{\beta} - m^{\beta}}{m_0^{\beta}} d(b_i(n_{\ell}),\alpha).
  	\end{align}
  	This already shows, that the limit in~\eqref{eq_density_a} can only exist if $d(b_i(n_{\ell}), \alpha)$ coincides for all $1\leq i \leq s$.
  	For the other direction we need to show that the contribution of $m \in M_0$ is negligible. We find that
  	\begin{align*}
  		\lim_{\nu \to \infty} \frac{1}{g(m_0 k^{\nu})} \sum_{\substack{m \in M_0\\m < m_0}} g((m+1)k^{\nu}) - g(m k^{\nu})
  			= \sum_{\substack{m \in M_0\\m < m_0}} \frac{(m+1)^{\beta} - m^{\beta}}{m_0^{\beta}}.
  	\end{align*}
  	By the mean value theorem, there exists some $\xi \in [0,1]$ such that $(m+1)^{\beta} - m^{\beta} = \beta (m+\xi)^{\beta-1}$, which is monotonically decreasing in $\xi$ (as $0<\beta \leq 1$ and, therefore, $\beta-1 <0$). Thus, we have $(m+1)^{\beta} - m^{\beta} \leq \beta m^{\beta-1}$ which gives
  	\begin{align*}
  		\sum_{\substack{m \in M_0\\m < m_0}} \frac{(m+1)^{\beta} - m^{\beta}}{m_0^{\beta}} &\leq \sum_{\substack{m \in M_0\\m < m_0}} \frac{\beta  m^{\beta-1}}{m_0^{\beta}}\\
			&\leq \sum_{m\leq \abs{M_0 \cap[0, m_0-1]}} \frac{\beta m^{\beta-1}}{m_0^{\beta}}\\
			&\leq \rb{\frac{\abs{M_0 \cap[0,m_0-1]} + 1}{m_0}}^{\beta},
  	\end{align*}
	where the last inequality is obtained by estimating the sum by an integral.
  	This shows that the contribution of $m \in M_0$ is negligible for large $\lambda$.
  	We note that for $N \to \infty$ we also have $\nu \to \infty$ which gives,
  	\begin{align*}
  		\lim_{N\to \infty} \frac{1}{g(N)} \sum_{\ell \leq g(N)} \inda &= \lim_{N\to \infty} \frac{1}{g(m_0 k^{\nu})} \sum_{\ell \leq g(m_0 k^{\nu})} \inda + O\rb{\frac{1}{k^{\lambda-1}}}\\
  			&= \lim_{N\to \infty} \sum_{0 \leq m < m_0} \frac{1}{g(m_0 k^{\nu})} \sum_{g(mk^{\nu}) \leq \ell < g((m+1) k^{\nu})} \inda + O\rb{\frac{1}{k^{\lambda-1}}}.
  	\end{align*}
  	Let us assume that $d(b_i(n_{\ell}), \alpha) = d(\alpha)$ for all $1\leq i \leq s$. Thus, we have for $m \notin M_0$,
  	\begin{align*}
  		\lim_{\nu\to \infty} \frac{1}{g(m_0 k^{\nu})} \sum_{g(mk^{\nu}) \leq \ell < g((m+1) k^{\nu})} \inda = \frac{g((m+1) k^{\nu}) - g(m k^{\nu})}{g(m_0 k^{\nu})} d(\alpha).
  	\end{align*}
  	This gives
  	\begin{align*}
  		\lim_{N\to \infty} \frac{1}{g(N)} \sum_{\ell \leq g(N)} \inda &= d(b_i(n_{\ell}), \alpha) + O\rb{\rb{\frac{\abs{M_0 \cap[0,m_0-1]}}{m_0}}^{\beta}} + O\rb{\frac{1}{k^{\lambda-1}}}.
  	\end{align*}
  	The error terms vanish for $\lambda \to \infty$ as $m_0 \geq k^{\lambda-1}$.
  	Thus one needs to let first $\lambda \to \infty$ and then $N \to \infty$ to achieve the desired result.

  \end{proof}

  \section{Subsequences of primitive automatic sequences}\label{sec_primitive}

We start this section by discussing a result by the last author~\cite{Muellner2017}, which allows to represent a $k$-automatic sequence $a(n)$ which is primitive and prolongable as a combination of an almost periodic sequence and a sequence that looks random in some ways.
This representation has the form
\begin{align}\label{eq_structure_a}
	a(n) = f(s(n), T(n)),
\end{align}
  where $s(n)$ is a pure synchronizing $k$-automatic sequence taking values in $Q^{(c)}$ for some $c \geq 1$ and $T(n)$ takes values in a finite group $G$ with the following property.
 For every $j < k$ and $q \in Q^{(c)}$ there exists $g_{j, q}\in G$ such that $T(n \cdot k + j) = T(n) \cdot g_{j, s(n)}$ holds for all $n \in \N$.
We see that $T$ takes a particularly simple form when $s$ is constant -- this corresponds to a so called invertible (sometimes also called bijective) automatic sequence.

\begin{example}\label{ex_aut}
	We consider the following automaton, with input alphabet $\{0,1\}$.

	 \begin{tikzpicture}[->,>=stealth',shorten >=1pt,auto,node distance=2.8cm, semithick, bend angle = 15, cross line/.style={preaction={draw=white,-,line width=4pt}}]
    
    \node[state, initial](A)                    {$q_0$};
    \node[state]         (B) [above right of=A] {$q_1$};
    \node[state]         (C) [right of=B] 	{$q_2$};
    \node[state]         (D) [below right of=A] {$q_3$};
    \node[state]         (E) [right of=D]       {$q_4$};
    
    \path [every node/.style={font=\footnotesize}, pos = 0.66]
    (A) edge [bend left]  node 		 {0}      (B)
	edge [loop above] node [pos=0.5] {1} 	(A)
    (B) edge [bend left]  node 		 {0}	(A)
	edge              node 	[right]	 {1}	(E)
    (C) edge [loop above] node [pos=0.5] {0} 	(C)
	edge [cross line] node 	[left]	 {1}	(D)
    (D) edge              node 		 {0}	(A)
	edge              node 	[right]	 {1}	(B)
    (E) edge              node [right]	 {0,1}	(C);
    
  \end{tikzpicture}
  
	The sequence $s(n)$ corresponds to the following automaton.

\tikzset{elliptic state/.style={draw,ellipse}}
\begin{tikzpicture}[->,>=stealth',shorten >=1pt,auto,node distance=2.8cm, semithick, bend angle = 15, cross line/.style={		preaction={draw=white,-,line width=4pt}}]
				
				\node[elliptic state, initial]	(A)                    {$(q_0, q_1, q_2)$};
				\node[elliptic state]	         (B) [below of=A] 	{$(q_0, q_3, q_4)$};
				
				\path [every node/.style={font=\footnotesize}, pos = 0.5]
				(A) edge [draw = black, loop above] node [black]		 	{0}      	(A)
						edge [draw = black, bend left]  node [black]		 	{1} 	(B)
				(B) edge [draw = black, bend left]  node [align=center]	{0,1}	(A);
			\end{tikzpicture}

	The group $G = S_3$ and the group elements $g_{j,q}$ are given by
  \begin{align*}
    g_{0,(q_0,q_1,q_2)} &= (12), \quad
    g_{1,(q_0,q_1,q_2)} = (23)\\
    g_{0,(q_0,q_3,q_4)} &= (12), \quad
    g_{1,(q_0,q_3,q_4)} = id,
  \end{align*}
	and the function $f$ is given by $f((q_{i_1}, q_{i_2}, q_{i_3}),g) = q_{i_{g^{-1}(1)}}$. For a more detailed treatment of this example see~\cite{Muellner2017}. 
\end{example}
  
  We start by discussing some properties of synchronizing automatic sequences. For a more detailed treatment of subsequences of synchronizing automatic sequences see~\cite{Deshouillers2015}.
  We recall that a word $\bfw \in \Sigma^{*}$ is synchronizing for an automaton $\cA = (Q, q_0, \Sigma, \delta, \Delta, \tau)$ if $\delta(q, \bfw) = \delta(q_0, \bfw)$ for all $q \in Q$.
  This implies directly that the concatenation of a synchronizing word with any word is again synchronizing.
  We define the set of synchronizing integers as follows.
  \begin{align*}
  	S := \{n \in \N: (n)_k \text{ is synchronizing}\}.
  \end{align*}
  We will also make use of a truncated version, $S_{\lambda} := S \cap [0,\ldots,k^{\lambda}-1]$.
  We recall that by the defining property of a synchronizing word, $s(n) = s(m)$ if $n \equiv m \bmod k^{\lambda}$ for $m \in S_{\lambda}$. Moreover, we have $\lim_{\lambda \to \infty} \frac{\abs{S_{\lambda}}}{k^{\lambda}} = 1$ by~\cite[Lemma 2.2]{Deshouillers2015}.
  This already shows that $s(n)$ is almost periodic, i.e. it can be (uniformly) approximated by periodic functions.
  
  $T(n)$, which is sometimes called the \emph{invertible part}, looks much more random in many ways, but still has some periodic properties. 
  In particular, there exists a normal subgroup $G_0$ such that $G/G_0 \cong \Z / d\Z$ for some $d \in \N$ which is coprime to $k$ and depends on the sequence $a(n)$. Furthermore, there exist cosets $G_0, G_1, \ldots, G_{d-1}$ such that $T(n) \in G_{(n \bmod d)}$ for all $n \in \N$.
  \\
  One of the key tools to study the distribution of sequences that take values in $G$ are (unitary and irreducible) representations (see for example~\cite{Serre1977} for more information on linear representations of finite groups). A $m$-dimensional unitary representation $D: G \to \U_m$ is a homomorphism from G to the set of unitary $m \times m$ matrices. It is said to be irreducible if there exists no non-trivial subspace $V \subset \C^{m \times 1}$ such that $D(g) \cdot V \subseteq V$ holds for all $g \in G$.
  The periodic behaviour described above manifests itself in the existence of special representations $D_{0}, D_{1}, \ldots, D_{d-1}$ form $G$ to $\U_1$ which can be defined via
  \begin{align*}
  	D_{j}(T(n)) = \e\rb{\frac{n \cdot  j}{d}}.
  \end{align*}
  We say that two representations $D, D'$ are equivalent if there exists a matrix $A \in U_m$ such that $D'(g) = A D(g) A^{-1}$ for all $g \in G$.
  It is a well-known fact that for a finite group $G$ there are only finitely many equivalence classes of irreducible and unitary representations. Furthermore, non-equivalent irreducible and unitary representations $D, D'$ are orthogonal, i.e.
  \begin{align*}
  	0 = \langle D, D' \rangle = \frac{1}{\abs{G}} \sum_{g\in G} D(g) \overline{D'(g)}.
  \end{align*}

  Very importantly, one can use representations to determine the asymptotic distribution of a sequence (see for example~\cite{Kuipers1974} for a proof).
  \begin{lemma}\label{le_distT}
  Let $G$ be a finite group and $\boldsymbol{\nu}$ be a probability measure on $G$. Then a sequence $(x_n)_{n \geq 0}$ is $\boldsymbol{\nu}$-uniformly distributed
  in $G$, i.e., $\frac{1}{N} \sum_{n < N}\delta_{x_n} \rightarrow \boldsymbol{\nu}$, if and only if 
  \begin{align}\label{eq_dist}
    \lim_{N\rightarrow \infty}\frac{1}{N} \sum_{n<N} D(x_n) = \int_{G} D \, d\boldsymbol{\nu}
  \end{align}
  holds for all irreducible unitary representations $D$ of $G$.
\end{lemma}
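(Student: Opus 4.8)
The plan is to deduce this from the Peter--Weyl theorem for finite groups. The first observation is that, since $G$ is finite and hence discrete, a sequence of probability measures on $G$ converges weakly if and only if it converges pointwise on singletons; consequently $\frac{1}{N}\sum_{n<N}\delta_{x_n}\to\boldsymbol{\nu}$ is equivalent to
\[
\lim_{N\to\infty}\frac{1}{N}\sum_{n<N}\phi(x_n)=\int_G\phi\,d\boldsymbol{\nu}
\qquad\text{for every }\phi\colon G\to\C,
\]
because the indicator functions of the points of $G$ span the space $L^2(G)$ of all $\C$-valued functions on $G$. So it suffices to show that this $\phi$-convergence (for all $\phi$) is equivalent to the convergence \eqref{eq_dist} (for all irreducible unitary representations $D$).

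One direction is immediate: each matrix entry $g\mapsto D(g)_{ij}$ of an irreducible unitary representation is one particular function in $L^2(G)$, so the $\phi$-convergence applied entrywise yields \eqref{eq_dist}. For the converse I would use the orthogonality relations already recalled above together with a dimension count. Let $D^{(1)},\dots,D^{(h)}$ be a complete set of representatives of the equivalence classes of irreducible unitary representations of $G$, with $d_r=\dim D^{(r)}$. The functions $g\mapsto D^{(r)}(g)_{ij}$, $1\le r\le h$, $1\le i,j\le d_r$, are pairwise orthogonal for $\langle\cdot,\cdot\rangle$ (this includes the orthogonality of the matrix entries within a single irreducible representation, which is the finite-group orthogonality relation), hence linearly independent; as their number equals $\sum_{r}d_r^2=\abs{G}=\dim L^2(G)$, they form a basis of $L^2(G)$. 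Therefore every $\phi\colon G\to\C$ is a finite linear combination of such matrix entries, and by linearity of the limit the convergence \eqref{eq_dist} for all irreducible unitary $D$ forces the $\phi$-convergence for all $\phi$ (the trivial one-dimensional representation $D\equiv 1$ in particular supplies the normalization $\frac{1}{N}\sum_{n<N}1\to 1=\boldsymbol{\nu}(G)$). Combined with the first paragraph, this gives the asserted equivalence.

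I do not expect a genuine obstacle here: the only real input is the Peter--Weyl/orthogonality fact that the matrix coefficients of the irreducible representations span all functions on $G$, which for finite groups is elementary (orthogonality together with $\sum_r d_r^2=\abs{G}$) and is proved in the references cited above. The one point requiring a little care is the reduction in the first paragraph, namely that weak convergence of probability measures on the finite set $G$ coincides with pointwise convergence on singletons --- this is precisely where finiteness of $G$ enters.
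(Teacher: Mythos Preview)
Your argument is correct: the reduction of weak convergence on a finite set to convergence against every $\phi\colon G\to\C$, together with the Schur orthogonality relations and the dimension count $\sum_r d_r^2=\abs{G}$ showing that matrix coefficients span $L^2(G)$, gives exactly the equivalence claimed. The paper does not actually prove this lemma---it simply cites \cite{Kuipers1974} (where the result is stated in the more general setting of compact groups via the Peter--Weyl theorem)---so your proof is a self-contained specialization of that standard argument to finite $G$, and there is nothing to compare beyond noting that both routes rest on the same representation-theoretic fact.
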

 \begin{remark}
 	The above lemma actually works in a much more general setting. That is, the same statement holds when $G$ is a compact group and $\boldsymbol{\nu}$ is a regular normed Borel measure in $G$.
 \end{remark}

  \subsection{A general strategy}
  Now we describe a method on how to work with subsequences of primitive automatic sequences using~\eqref{eq_structure_a}.
  We need another definition before tackling this task.
  \begin{definition}
  	A sequence $(n_{\ell})_{\ell \in \N}$ of nonnegative integers \emph{distributes regularly within residue classes} if for any $h \in \N, 0 \leq m < h$ there exists some $c_{n_{\ell}}(m; h)$ such that
  	\begin{align*}
  		\lim_{L \to \infty} \frac{\abs{\{\ell \leq L: n_{\ell} \equiv m \bmod h\}}}{L} = c_{n_{\ell}}(m; h)
  	\end{align*}
  	and it is multiplicative in the second argument, i.e. $c_{n_{\ell}}(m; h_1 \cdot h_2) = c_{n_{\ell}}(m; h_1) \cdot c_{n_{\ell}}(m; h_2)$ for any $m \in \N$ and co-prime $h_1, h_2$. We write $c(m; h) = c_{n_{\ell}}(m; h)$ if $n_{\ell}$ is clear from the context.
  \end{definition}
  Now we are able to state the main theorem of this subsection.
  \begin{theorem}\label{th_density_primitive}
  	Let $(n_{\ell})_{\ell \in \N}$ be a strictly increasing sequence that distributes regularly within residue classes such that
  	\begin{align}\label{eq_not_sparse}
  		\lim_{\lambda\to \infty} \sum_{\substack{m < k^{\lambda}\\m \notin S_{\lambda}}} c(m; k^{\lambda}) = 0.
  	\end{align}
  	Furthermore, we assume that for any irreducible and unitary representation $D$ different from $D_j$ and any $\lambda \in \N, m < k^{\lambda}$,
  	\begin{align}\label{eq_D_not_D_j}
  		\lim_{L \to \infty} \frac{1}{L} \sum_{\substack{\ell \leq L\\ n_{\ell} \equiv m \bmod k^{\lambda}}} D(T(n_{\ell})) = 0.
  	\end{align}
  	Then the densities of $s, T$ and $a$ along $n_{\ell}$ exist and are given by
  	\begin{align*}
  		d(s(n_{\ell}), q) &= \lim_{\lambda \to \infty} \sum_{m \in S_{\lambda}} \ind_{[s(m) = q]} c(m; k^{\lambda}) = \lim_{\lambda \to \infty} \sum_{m < k^{\lambda}} \ind_{[s(m) = q]} c(m; k^{\lambda})\\
  		d(T(n_{\ell}), g) &= \frac{d}{\abs{G}} \cdot c(j; d) \text{ for } g \in G_j\\
  		d(a(n_{\ell}), \alpha) &= \sum_{q \in Q, g \in G} d(s(n_{\ell}), q) \cdot d(T(n_{\ell}), g) \cdot \ind_{[f(q, g) = \alpha]}.
  	\end{align*}
  \end{theorem}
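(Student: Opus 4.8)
The plan is to establish the three density formulas in the order listed, obtaining the one for $a$ from the asymptotic independence of $s(n_\ell)$ and $T(n_\ell)$.

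\textbf{Density of $s$.} Since $s$ is synchronizing, $s(n_\ell)=s(m)$ whenever $m\in S_\lambda$ and $n_\ell\equiv m\bmod k^\lambda$. Splitting $\frac{1}{L}\#\{\ell\le L:s(n_\ell)=q\}$ according to the residue of $n_\ell$ modulo $k^\lambda$ and using that $(n_\ell)$ distributes regularly within residue classes, I would get
\[
\limsup_{L\to\infty}\Bigl|\frac{1}{L}\#\{\ell\le L:s(n_\ell)=q\}-\sum_{\substack{m\in S_\lambda\\ s(m)=q}}c(m;k^\lambda)\Bigr|\ \le\ \sum_{\substack{m<k^\lambda\\ m\notin S_\lambda}}c(m;k^\lambda),
\]
the bound coming from the residues in $[0,k^\lambda)\setminus S_\lambda$. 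By \eqref{eq_not_sparse} the right-hand side tends to $0$ as $\lambda\to\infty$, so a squeezing argument shows $\lim_L\frac{1}{L}\#\{\ell\le L:s(n_\ell)=q\}$ exists and equals $\lim_\lambda\sum_{m\in S_\lambda,\,s(m)=q}c(m;k^\lambda)$; since $\sum_{m<k^\lambda,\,m\notin S_\lambda}c(m;k^\lambda)\to0$, it also equals $\lim_\lambda\sum_{m<k^\lambda,\,s(m)=q}c(m;k^\lambda)$, giving both expressions for $d(s(n_\ell),q)$.

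\textbf{Density of $T$.} Let $\boldsymbol{\nu}$ be the measure on $G$ with $\boldsymbol{\nu}(\{g\})=\frac{d}{|G|}\,c(j;d)$ for $g\in G_j$; since $|G_j|=|G|/d$ and $\sum_{j<d}c(j;d)=1$ it is a probability measure. By Lemma~\ref{le_distT} it will suffice to prove $\frac{1}{L}\sum_{\ell\le L}D(T(n_\ell))\to\int_G D\,d\boldsymbol{\nu}$ for every irreducible unitary representation $D$. If $D=D_j$, then $D(T(n_\ell))=\e(n_\ell j/d)$ depends only on $n_\ell\bmod d$, so regular distribution modulo $d$ gives $\frac{1}{L}\sum_{\ell\le L}D_j(T(n_\ell))\to\sum_{r<d}\e(rj/d)c(r;d)$, which equals $\int_G D_j\,d\boldsymbol{\nu}$ because $D_j$, being trivial on $G_0$, is constant equal to $\e(rj/d)$ on $G_r$. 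If $D$ is not equivalent to any $D_j$, then applying \eqref{eq_D_not_D_j} with $\lambda=1$ and summing over $m<k$ gives $\frac{1}{L}\sum_{\ell\le L}D(T(n_\ell))\to0$; on the other hand $\int_G D\,d\boldsymbol{\nu}=\sum_{r<d}c(r;d)\,\frac{1}{|G_r|}\sum_{g\in G_r}D(g)$ and, for any fixed $g_r\in G_r$, $\sum_{g\in G_r}D(g)=D(g_r)\sum_{h\in G_0}D(h)$. Here $\frac{1}{|G_0|}\sum_{h\in G_0}D(h)$ is the orthogonal projection onto the subspace $V_0$ of the representation space fixed pointwise by $D(G_0)$; normality of $G_0$ makes $V_0$ a $G$-invariant subspace, so irreducibility forces $V_0=0$ or $V_0=$ everything, and the latter would make $D$ trivial on $G_0$, hence a $1$-dimensional representation factoring through $G/G_0\cong\Z/d\Z$, i.e.\ $D=D_j$ — a contradiction. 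Thus $V_0=0$, $\int_G D\,d\boldsymbol{\nu}=0$, and both sides agree. Hence $(T(n_\ell))$ is $\boldsymbol{\nu}$-uniformly distributed and $d(T(n_\ell),g)=\boldsymbol{\nu}(\{g\})$.

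\textbf{Density of $a$, and the main obstacle.} Since $a=f(s,T)$, it remains to show that the joint density $\lim_L\frac{1}{L}\#\{\ell\le L:s(n_\ell)=q,\ T(n_\ell)=g\}$ exists and equals $d(s(n_\ell),q)\cdot d(T(n_\ell),g)$; summing $\ind_{[f(q,g)=\alpha]}$ over all pairs $(q,g)$ then gives the formula for $d(a(n_\ell),\alpha)$. This decoupling of $s$ and $T$ is the heart of the matter. I would again split over the residue of $n_\ell$ modulo $k^\lambda$: on the synchronizing residues $m\in S_\lambda$ the value $s(n_\ell)$ is frozen to $s(m)$, and the contribution of residues in $[0,k^\lambda)\setminus S_\lambda$ is at most $\sum_{m<k^\lambda,\,m\notin S_\lambda}c(m;k^\lambda)$. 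For fixed $m\in S_\lambda$ it remains to prove
\[
\frac{1}{L}\#\{\ell\le L:\ n_\ell\equiv m\bmod k^\lambda,\ T(n_\ell)=g\}\ \longrightarrow\ c(m;k^\lambda)\,\boldsymbol{\nu}(\{g\}),
\]
which, since the matrix coefficients of the irreducible representations span all functions on the finite group $G$, is equivalent to $\frac{1}{L}\sum_{\ell\le L,\ n_\ell\equiv m\bmod k^\lambda}D(T(n_\ell))\to c(m;k^\lambda)\int_G D\,d\boldsymbol{\nu}$ for every irreducible $D$. For $D$ not equivalent to any $D_j$ this is precisely \eqref{eq_D_not_D_j}, both sides being $0$. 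For $D=D_j$ I would use $D_j(T(n_\ell))=\e(n_\ell j/d)$ together with $\gcd(k^\lambda,d)=1$: by the Chinese Remainder Theorem and the multiplicativity $c(\,\cdot\,;k^\lambda d)=c(\,\cdot\,;k^\lambda)\,c(\,\cdot\,;d)$, one gets $\frac{1}{L}\#\{\ell\le L:n_\ell\equiv m\bmod k^\lambda,\ n_\ell\equiv r\bmod d\}\to c(m;k^\lambda)\,c(r;d)$, hence $\frac{1}{L}\sum_{\ell\le L,\ n_\ell\equiv m\bmod k^\lambda}D_j(T(n_\ell))\to c(m;k^\lambda)\sum_{r<d}\e(rj/d)c(r;d)=c(m;k^\lambda)\int_G D_j\,d\boldsymbol{\nu}$. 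Summing over $m\in S_\lambda$ with $s(m)=q$ and letting $\lambda\to\infty$ (using \eqref{eq_not_sparse} and the first part to recognize $\lim_\lambda\sum_{m\in S_\lambda,\,s(m)=q}c(m;k^\lambda)=d(s(n_\ell),q)$) produces the joint density $d(s(n_\ell),q)\,\boldsymbol{\nu}(\{g\})=d(s(n_\ell),q)\,d(T(n_\ell),g)$. The point that needs care is exactly this: synchronization lets us freeze $s$ on residue classes modulo $k^\lambda$, while $\gcd(k^\lambda,d)=1$ keeps the coset structure of $T$ undisturbed apart from the CRT bookkeeping, and the hypothesis \eqref{eq_D_not_D_j} is already phrased in the conditioned form needed here; the auxiliary representation-theoretic fact — that an irreducible representation nontrivial on $G_0$ averages to $0$ over every coset of $G_0$ — is what makes the limiting measure come out equal to $\boldsymbol{\nu}$.
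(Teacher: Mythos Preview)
Your proof is correct and follows essentially the same route as the paper: condition on the residue of $n_\ell$ modulo $k^\lambda$, freeze $s$ on synchronizing residues using \eqref{eq_not_sparse}, and show $\boldsymbol{\nu}$-equidistribution of $T(n_\ell)$ in each such class via Lemma~\ref{le_distT}, the Chinese Remainder Theorem for the $D_j$, and hypothesis \eqref{eq_D_not_D_j} for the remaining representations. The one noteworthy difference is in verifying $\int_G D\,d\boldsymbol{\nu}=0$ for $D$ not among the $D_j$: the paper first writes $\boldsymbol{\nu}$ by Fourier inversion, $\boldsymbol{\nu}(g)=\frac{1}{|G|}\sum_{j}\overline{D_j(g)}\sum_i c(i;d)\,\e(ij/d)$, and then invokes Schur orthogonality of $D$ against each $D_j$, whereas you define $\boldsymbol{\nu}$ directly by its coset values and argue that $\frac{1}{|G_0|}\sum_{h\in G_0}D(h)$ is the projection onto the $G_0$-fixed subspace, which normality and irreducibility force to be zero unless $D$ factors through $G/G_0\cong\Z/d\Z$. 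Your argument is a touch more self-contained; the paper's makes the structure of $\boldsymbol{\nu}$ as the measure ``dual'' to the family $\{D_j\}$ more explicit and gets the simplified formula $\boldsymbol{\nu}(g)=\frac{d}{|G|}\,c(j;d)$ as a consequence rather than a definition.
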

  
  \begin{proof}
  
  We first show that the limit
  \begin{align*}
  	\lim_{\lambda \to \infty} \sum_{r \in S_{\lambda}} \ind_{[s(r) = q]} c(r; k^{\lambda})
  \end{align*}
  indeed exists.
  We find directly that
  \begin{align*}
  	\sum_{r \in S_{\lambda}} \ind_{[s(r) = q]} c(r; k^{\lambda}) \leq \sum_{0\leq r < k^{\lambda}} c(r; k^{\lambda}) = 1.
  \end{align*}
  Let us now assume that $r \in S_{\lambda}$ with $s(r) = q$. Then we have that $r'k^{\lambda} + r \in S_{\lambda}$ and $s(r' k^{\lambda} + r) = q$ for all $0\leq r' < k$. Also, we have $\sum_{0\leq r' < k} c(r' k^{\lambda} + r; k^{\lambda+1}) = c(r; k^{\lambda})$.
This shows
\begin{align*}
	\sum_{r \in S_{\lambda}} \ind_{[s(r) = q]} c(r; k^{\lambda}) \leq \sum_{r \in S_{\lambda+1}} \ind_{[s(r) = q]} c(r; k^{\lambda+1}).
\end{align*}
Thus, we have a bounded and monotone sequence and, therefore, the limit exists.
  
  We work from now on only with $(a(n_{\ell}))_{\ell\in \N}$ and prove the other results along the way.
  We start by approximating $s$ by a periodic function,
  \begin{align*}
  	\frac{1}{L} \sum_{\ell \leq L} \inda &= \frac{1}{L} \sum_{m < k^{\lambda}} \sum_{\substack{\ell \leq L\\ n_{\ell} \equiv m \bmod k^{\lambda}}} \ind_{[f(s(n_{\ell}), T(n_{\ell})) = \alpha]}\\
  		&= \sum_{m \in S_{\lambda}} \frac{1}{L} \sum_{\substack{\ell \leq L\\ n_{\ell} \equiv m \bmod k^{\lambda}}} \ind_{[f(s(m), T(n_{\ell})) = \alpha]} + O\rb{\frac{\abs{\{\ell \leq L: (n_{\ell} \bmod k^{\lambda}) \notin S_{\lambda}\}}}{L}}.
  \end{align*}
  
  We find, since $n_{\ell}$ distributes regularly within residue classes and by~\eqref{eq_not_sparse},
  \begin{align*}
  	\lim_{\lambda\to \infty} \lim_{L \to \infty} \frac{\abs{\{\ell \leq L: (n_{\ell} \bmod k^{\lambda}) \notin S_{\lambda}\}}}{L} = \lim_{\lambda \to \infty} \sum_{r \notin S_{\lambda}} c(r; k^{\lambda}) = 0.
  \end{align*}
  
  Thus, we are interested in computing
  \begin{align}\label{eq: moyenne}
  	\sum_{m \in S_{\lambda}} \frac{1}{L} \sum_{\substack{\ell \leq L\\ n_{\ell} \equiv m \bmod k^{\lambda}}} \ind_{[f(s(m), T(n_{\ell})) = \alpha]} &= \sum_{q \in Q} \sum_{\substack{m \in S_{\lambda} \\ s(m) = q}} \frac{1}{L}  \sum_{\substack{\ell \leq L\\ n_{\ell} \equiv m \bmod k^{\lambda}}} \ind_{[f(q, T(n_{\ell})) = \alpha]}.
  \end{align}
  
  Hence, we are interested in finding an asymptotic distribution of $T(n_{\ell})$ for $n_{\ell} \equiv m \bmod k^{\lambda}$, which will be done via Lemma~\ref{le_distT}.

  We find by the Chinese Remainder Theorem and as $d$ and $k$ are coprime,
  \begin{align}\label{eq: equidistribution}
\nonumber  \lim_{L \to \infty} \frac{1}{L} \sum_{\substack{\ell \leq L}} D_j(T(n_{\ell})) &= 
	\lim_{L \to \infty} \frac{1}{L} \sum_{0\leq m <k^{\lambda}} \sum_{\substack{\ell \leq L\\ n_{\ell} \equiv m \bmod k^{\lambda}}} D_j(T(n_{\ell}))\\ 
		&=\sum_{0\leq m <k^{\lambda}} \sum_{0\leq i < d}  \lim_{L \to \infty} \frac{1}{L} \sum_{\substack{\ell \leq L\\n_{\ell} \equiv m \bmod k^{\lambda}\\ n_{\ell} \equiv i \bmod d}} \e\rb{\frac{j \cdot i}{d}}\\
\nonumber	& =  \sum_{0\leq m <k^{\lambda}}\sum_{0\leq i < d}  c(m;k^{\lambda}) c(i; d)  \e\rb{\frac{j \cdot i}{d}} \\	
  \nonumber	& =  \sum_{0\leq i < d} c(i; d) \cdot \e\rb{\frac{j \cdot i}{d}}.
  \end{align}
  This finishes the computation of the left-hand side of~\eqref{eq_dist} together with~\eqref{eq_D_not_D_j}.
  Thus, it remains to construct a suitable measure $\boldsymbol{\nu}$. 
  We define $\boldsymbol{\nu}$ as
  \begin{align*}
  	\boldsymbol{\nu}(g) := \frac{1}{\abs{G}}\sum_{0\leq j < d} \overline{D_j(g)} \sum_{0\leq i < d} c(i; d) \cdot \e\rb{\frac{i\cdot j}{d}}.
  \end{align*}
  If $g\in G_{s}$, we find
  \begin{align*}
  	\boldsymbol{\nu}(g) &= \frac{1}{\abs{G}} \sum_{0 \leq i < d} c(i;d) \sum_{0 \leq j < d} \e\rb{\frac{i \cdot j}{d}} \e\rb{\frac{-s\cdot j}{d}}\\
  		&= \frac{1}{\abs{G}} c(s;d) \cdot d.
  \end{align*}
  Thus, it follows directly that $\boldsymbol{\nu}$ is probability measure on $G$.
  Moreover, we compute
  \begin{align*}
  	\int_{G} D d\boldsymbol{\nu} &= \sum_{g\in G} D(g) \boldsymbol{\nu}(g)\\
  		&= \sum_{0\leq j < d} \sum_{0 \leq i < d} c(i;d) \e\rb{\frac{i \cdot j}{d}} \frac{1}{\abs{G}} \sum_{g \in G} D(g) \cdot \overline{D_j(g)}.
  \end{align*}
  As the irreducible and unitary representations are either equivalent or orthogonal, we find that $\int_{G} D \, d\boldsymbol{\nu} = 0$ unless $D = D_j$ for some $0\leq j < d$. Since 
  $$
  \frac{1}{\vert G\vert} \sum_{g\in G} D_j(g)\cdot \overline{D_j(g)} =1 \,,
  $$
  we find 
  $$\int_{G} D_j \, d\boldsymbol{\nu} = \sum_{0 \leq i < d} c(i;d) \e\rb{\frac{i \cdot j}{d}}\,.$$  
  By \eqref{eq: equidistribution}, this shows that the sequence $T(n_\ell)$ is $\boldsymbol{\nu}$-uniformly distributed in $G$. 
  Finally, we are able to simplify the expression for $\boldsymbol{\nu}$. We find for $g \in G_{i_0}$,
  \begin{align*}
  	\boldsymbol{\nu}(g) &= \frac{1}{\abs{G}} \sum_{0\leq j < d} \e\rb{\frac{-i_0 \cdot j}{d}} \sum_{0\leq i < d} c(i; d) \e\rb{\frac{i\cdot j}{d}}\\
  		&= \frac{1}{\abs{G}} \sum_{0\leq i < d} c(i; d) \sum_{0\leq j < d} \e\rb{\frac{j(i - i_0)}{d}}\\
  		&= \frac{1}{\abs{G}} \sum_{0\leq i < d} c(i; d) \cdot d \cdot \ind_{[i = i_0]}\\
  		&= \frac{d}{\abs{G}} \cdot c(i_0; d).
  \end{align*}
  
  Thus, we can apply Lemma~\ref{le_distT} and find for $g \in G_j$
  \begin{align*}
  	\lim_{L \to \infty} \frac{c(m;k^{\lambda})}{L}  \sum_{\substack{\ell \leq L\\ n_{\ell} \equiv m \bmod k^{\lambda}}} \ind_{[T(n_{\ell}) = g]} = \frac{d}{\abs{G}} \cdot c(j; d) = d(T(n_{\ell}), g).
  \end{align*}
 Coming back to \eqref{eq: moyenne},  this shows in total
  \begin{align*}
  	\lim_{L \to \infty} \frac{1}{L} \sum_{\ell \leq L} \inda &= \sum_{m \in S_{\lambda}} c(m; k^{\lambda}) \sum_{g\in G} d(T(n_{\ell}), g) \ind_{[f(s(m), g) = \alpha]} + O\rb{\sum_{m \notin S_{\lambda}} c(m, k^{\lambda})}\\
  		&= \sum_{g\in G} d(T(n_{\ell}), g) \sum_{q \in Q} \sum_{\substack{m \in S_{\lambda}\\ s(m) = q}} c(m; k^{\lambda}) \ind_{[f(q, g) = \alpha]} + O\rb{\sum_{m \notin S_{\lambda}} c(m, k^{\lambda})}.
  \end{align*}
  The result follows now for $\lambda \to \infty$.
  \end{proof}

  	It turns out that~\eqref{eq_D_not_D_j} is usually the most challenging part about applying Theorem~\ref{th_density_primitive}.

  \section{The subsequence along primes}\label{sec:primes}
  
  We apply in this section Theorem~\ref{th_density_primitive} to the subsequence along primes 
which reproves results from~\cite{Muellner2017} using this new framework.
For this purpose we are repeating the key arguments from~\cite{Muellner2017}.
  
 We find directly by the Prime Number Theorem in arithmetic progressions that
 \begin{align*}
 	\lim_{N \to \infty} \frac{1}{\pi(N)} \sum_{p\leq N} \ind_{[p \equiv r \bmod m]} =  \frac{\ind_{[(r,m) = 1]}}{\varphi(m)} = c(r; m).
 \end{align*}
 One finds directly that primes distribute regularly within residue classes.
 Moreover, $(r, k^{\lambda}) = 1 \Leftrightarrow (r, k) = 1$ holds with positive probability.
 Thus, $c(r;k^{\lambda})$ resembles a uniform distribution on a subset of $[0, \ldots, k^{\lambda}-1]$ with positive density (independent of $\lambda$).
 This shows \eqref{eq_not_sparse} as $\lim_{\lambda \to \infty} \frac{\abs{S_{\lambda}}}{k^{\lambda}} = 0$.
 
  Thus, it remains to show for any $m, h \in \N$,
  \begin{align}\label{eq_saving_primes}
  	\lim_{N \to \infty} \frac{1}{\pi(N)} \norm{\sum_{\substack{p \leq N\\ p \equiv m \bmod h}} D(T(p))} = 0.
  \end{align} 
  The key ingredient was to generalize and use a method developed by Mauduit and Rivat~\cite{Mauduit2015}.
  We will focus here mainly on the generalized version, as it proved to be better applicable in this situation.
  We fix some $k \in \N$ and let $f_{\lambda}(n)$ denote $f(n \bmod k^{\lambda})$ and let $.^{H}$ denote the Hermitian transpose.
  We also need the following two definitions.
  \begin{definition}\label{def:1}
  A function $f:\N\rightarrow \U_d$ has the \emph{Carry property} if there exists $\eta >0$ such that uniformly for $(\lambda,\alpha, \rho)\in \N^3$
  with $\rho<\lambda$, the number of integers $0\leq \ell <k^{\lambda}$ such that there exists $(n_1,n_2) \in \{0,\ldots,k^{\alpha}-1\}^2$ with
  \begin{align}\label{eq:carry_violation}
    f(\ell k^{\alpha} + n_1 + n_2)^{H} f(\ell k^{\alpha} + n_1) \neq f_{\alpha + \rho}(\ell k^{\alpha} + n_1 + n_2)^{H} f_{\alpha + \rho}(\ell k^{\alpha} + n_1)
  \end{align}
  is at most $O(k^{\lambda-\eta \rho})$ where the implied constant may depend only on $k$ and $f$.
\end{definition}
\begin{definition}\label{def:2}
  Given a non-decreasing function $\gamma: \R \rightarrow \R$ satisfying $\lim_{\lambda \rightarrow \infty} \gamma(\lambda) = +\infty$ and $c>0$ we let 
  $F_{\gamma, c}$ denote the set of functions $f:\N\rightarrow \U_d$ such that for $(\alpha,\lambda)\in \N^2$ with $\alpha \leq c \lambda$ and $t\in \R$:
  \begin{align}
    \norm{k^{-\lambda} \sum_{u<k^{\lambda}} f(u k^{\alpha}) \e(-ut)}_F \leq k^{-\gamma(\lambda)}.
  \end{align}
  We say in this case that $f$ has the \emph{Fourier property}.
\end{definition}

The main difference between the given definitions and the ones used by Mauduit and Rivat is that they only considered complex-valued functions $f$ and a stronger Carry Property corresponding to $\eta = 1$.
The proof of the following result (to be found in~\cite{Muellner2017}) is in very large parts identical to the corresponding proof in~\cite{Mauduit2015}.

\begin{theorem}[Theorem 4.3 in \cite{Muellner2017}]\label{thm:prime}
  Let $\gamma:\R\to\R$ be a non-decreasing function satisfying $\lim_{\lambda\to\infty}\gamma(\lambda) = + \infty$, and $f:\N\to \U_d$ be a function satisfying
  Definition~\ref{def:1} for some $\eta \in (0,1]$ and $f\in \mathcal{F}_{\gamma,c}$ for some $c\geq 10$ in Definition~\ref{def:2}.
  Then for any $\theta \in \R$ we have
  \begin{align}
    \norm{\sum_{n\leq x} \Lambda(n)f(n)\e(\theta n)} \ll c_1(k) (\log x)^{c_2(k)} x k^{-\eta \gamma(2\floor{(\log x)/(80 \log k)})/20},
  \end{align}
  with the same constants as in \cite{Mauduit2015}\footnote{This estimate is non-trivial whenever $\log(x) = o(\gamma(x))$.}.
\end{theorem}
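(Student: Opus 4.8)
The plan is to follow the Mauduit--Rivat method for exponential sums over primes twisted by digital functions (see \cite{Mauduit2010, Mauduit2015}), adapting it in two respects: the weight $f$ is matrix-valued, and the Carry property is assumed only with some exponent $\eta\in(0,1]$ rather than $\eta=1$. Throughout one sets $\lambda\approx\log x/\log k$ and carries two auxiliary parameters, a truncation level $\alpha\le c\lambda$ and a buffer length $\rho$, both to be chosen proportional to $\lambda$ at the very end. The first step is to apply Vaughan's identity to split $\sum_{n\le x}\Lambda(n)f(n)\e(\theta n)$ into $O(1)$ bilinear sums of two kinds: \emph{Type I} sums $\sum_{m\le M}c_m\sum_{n\le x/m}f(mn)\e(\theta mn)$ with $\abs{c_m}\ll 1$ and $M\ll x^{1/3}$, and \emph{Type II} sums $\sum_{M<m\le 2M}c_m\sum_{N<n\le 2N}d_n f(mn)\e(\theta mn)$ with $x^{1/3}\ll M\ll x^{2/3}$ and $\abs{c_m},\abs{d_n}$ bounded by divisor functions. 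Summing the divisor coefficients trivially where needed produces the factor $(\log x)^{c_2(k)}$, and the ranges $x^{1/3}$, $x^{2/3}$, together with the Cauchy--Schwarz losses below, are what fix the particular constants $80$ and $20$ in the exponent.

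For the Type I sums I would invoke the Carry property: replacing $f(mn)$ by the truncation $f_{\alpha+\rho}(mn)$, which depends only on $mn\bmod k^{\alpha+\rho}$, costs $O\rb{MN\,k^{-\eta\rho}}$ after summation over $n$, by Definition~\ref{def:1}. Expanding $f_{\alpha+\rho}$ into additive characters modulo $k^{\alpha+\rho}$ and summing the resulting geometric progressions in $n$ reduces the main term, up to admissible errors, to the Fourier-type quantities of Definition~\ref{def:2} evaluated at arguments combining $\theta$ with multiples of $k^{-(\alpha+\rho)}$; these are $\le k^{-\gamma(\lambda)}$. Balancing $\rho$ and $\alpha$ against $\lambda$ then gives a Type I contribution of the shape required by the theorem.

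The Type II sums are the core of the argument. Cauchy--Schwarz (or van der Corput's inequality) in the $m$-variable removes $d_n$ and one copy of $f$; expanding the square and separating the diagonal reduces matters to showing that the off-diagonal correlation sums $\sum_{M<m\le 2M}\sum_{1\le\abs{r}<N}\norm{\psi_{m,r}}$, where $\psi_{m,r}=\sum_{n}\overline{f(mn)}\,f\rb{m(n+r)}\,\e(\theta m r)$ and $n$ runs over the relevant dyadic block, are smaller than the trivial bound $MN^{2}$ by a factor of the required order. Because $f$ takes values in $\U_d$ one must keep the two copies of $f$ in the displayed order and use only the submultiplicativity $\norm{AB}\le\norm{A}\,\norm{B}$ of the operator norm; no commutativity is invoked, and this is the only essential point where the matrix-valued case departs from \cite{Mauduit2015}. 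Splitting the digits of the arguments into a low block of length $\alpha$, a buffer of length $\rho$ and a high block, the Carry property shows that, outside an exceptional set of $m$ of size $O\rb{k^{\lambda-\eta\rho}}$, the product $\overline{f(mn)}\,f(m(n+r))$ depends only on the low $\alpha+\rho$ digits; the sum over the high digits of $m$ is then, after pulling out the constant carry-dependent part, a complete exponential sum controlled by Definition~\ref{def:2}, while the buffer block is absorbed by a further $L^{1}$--$L^{2}$ Cauchy--Schwarz, exactly as in \cite{Mauduit2015}. Optimizing $\alpha$ and $\rho$ against $\lambda$ then gives the Type II contribution, of the same order as the Type I one.

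Adding the $O(1)$ Type I and Type II contributions together with the truncation errors and inserting the divisor bound on the Vaughan coefficients yields the stated inequality, with $c_1(k)$ absorbing the dependence on the base $k$ and on the dimension $d$. I expect the main obstacle to be the bookkeeping in the Type II step: one has to push the three parameters $\alpha,\rho,\lambda$ simultaneously through the van der Corput smoothing and the carry truncation and check that the weaker exceptional-set bound $k^{\lambda-\eta\rho}$ — a general $\eta$ rather than $\eta=1$ — still beats the main term after the two Cauchy--Schwarz losses, which is precisely why $\eta$ appears only linearly in the final exponent. Verifying that passing from scalars to $d\times d$ unitaries costs no more than a constant is conceptually trivial, but it has to be re-examined at every inequality of the original proof.
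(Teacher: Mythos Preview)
Your sketch is correct and aligns with the approach the paper attributes to this result. Note, however, that the paper does not actually prove Theorem~\ref{thm:prime}: it is quoted verbatim as Theorem~4.3 of \cite{Muellner2017}, with the remark that the proof there ``is in very large parts identical to the corresponding proof in~\cite{Mauduit2015}.'' What you have outlined is precisely that strategy: Vaughan's identity into Type~I and Type~II sums, the Carry property (Definition~\ref{def:1}) to truncate digits at cost $O(k^{-\eta\rho})$, the Fourier property (Definition~\ref{def:2}) to bound the resulting complete sums, and Cauchy--Schwarz/van~der~Corput for the bilinear Type~II part, with the only genuine modifications being the bookkeeping for a general exponent $\eta\in(0,1]$ and the use of submultiplicativity of operator norms in place of commutativity for the matrix-valued $f$. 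Both adaptations are exactly those carried out in~\cite{Muellner2017}, so your plan and the paper's cited source coincide.
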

  The factor $\e(\theta n)$ is in particular useful to detect $n \equiv a \bmod m$ via the following identity,
  \begin{align*}
  	\frac{1}{m} \sum_{0 \leq h < m} \e\rb{\frac{h(n-a)}{m}} = \ind_{[n \equiv a \bmod m]}.
  \end{align*}
  
  Furthermore, it is classical to replace estimates for the sum along primes by correlations with $\Lambda$.
  This gives (for example by~\cite{Iwaniec2004})
  \begin{align*}
  \norm{\frac{1}{\pi(N)} \sum_{p<N} f(p) \e(p\theta)}_F &\ll \frac{1}{\log N} \frac{1}{\pi(N)} \max_{t\leq N} \norm{\sum_{n\leq t} \Lambda(n) f(n) \e(n\theta)}_{F} + O(\sqrt{N}).
\end{align*}
  The Prime Number Theorem ensures that $\log(N) \cdot \pi(N) \sim N$.
  In total we find
  \begin{align*}
  	\norm{\frac{1}{\pi(N)} \sum_{\substack{p< N\\ p \equiv a \bmod m}} f(p)} &\leq \max_{\theta} \norm{\frac{1}{\pi(N)} \sum_{p< N} f(p) \e(p \theta)}\\
  		&\ll \frac{1}{N} \max_{t\leq N} \max_{\theta}\norm{\sum_{n\leq t} \Lambda(n) f(n) \e(n\theta)}_{F} + O(\sqrt{N}).
  \end{align*}
  Thus we have shown the following corollary.
  \begin{corollary}
  	Let $\gamma:\R\to\R$ be a non-decreasing function satisfying $\lim_{\lambda\to\infty}\gamma(\lambda)/\log(\lambda) = + \infty$, and $f:\N\to \U_d$ be a function satisfying
  Definition~\ref{def:1} for some $\eta \in (0,1]$ and $f\in \mathcal{F}_{\gamma,c}$ for some $c\geq 10$ in Definition~\ref{def:2}.
  Then for any $a, m \in \N$ we have
  \begin{align*}
  	\lim_{N\to \infty} \norm{\frac{1}{\pi(N)} \sum_{\substack{p< N\\ p \equiv a \bmod m}} f(p)} = 0.
  \end{align*}
  \end{corollary}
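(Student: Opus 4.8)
The plan is to deduce the corollary from Theorem~\ref{thm:prime} via the two classical reductions already recalled in the text above: detecting the congruence condition by additive characters, and comparing sums over primes with sums weighted by the von Mangoldt function $\Lambda$. Since the genuinely hard analytic input (the Mauduit--Rivat type estimate) is precisely Theorem~\ref{thm:prime}, the work here amounts to a careful bookkeeping of error terms.

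First I would use the identity
\[
\ind_{[p \equiv a \bmod m]} = \frac{1}{m} \sum_{0 \leq h < m} \e\rb{\frac{h(p-a)}{m}},
\]
so that
\[
\norm{\frac{1}{\pi(N)} \sum_{\substack{p < N\\ p \equiv a \bmod m}} f(p)} \leq \max_{0 \leq h < m} \norm{\frac{1}{\pi(N)} \sum_{p < N} f(p)\, \e\rb{\frac{hp}{m}}},
\]
and it therefore suffices to bound $\norm{\frac{1}{\pi(N)} \sum_{p < N} f(p) \e(p\theta)}$ uniformly in $\theta \in \R$. Next, the standard comparison between prime sums and $\Lambda$-weighted sums (recalled in the text via~\cite{Iwaniec2004}), together with the prime number theorem in the form $\pi(N) \log N \sim N$, gives
\[
\norm{\frac{1}{\pi(N)} \sum_{p < N} f(p) \e(p\theta)} \ll \frac{1}{N} \max_{t \leq N} \norm{\sum_{n \leq t} \Lambda(n) f(n) \e(n\theta)} + O(N^{-1/2}).
\]

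Then I would apply Theorem~\ref{thm:prime}: since $f$ satisfies Definition~\ref{def:1} for some $\eta \in (0,1]$ and lies in $\mathcal{F}_{\gamma,c}$ for some $c \geq 10$, we obtain, uniformly in $\theta$ and for all $t \leq N$,
\[
\norm{\sum_{n \leq t} \Lambda(n) f(n) \e(n\theta)} \ll c_1(k) (\log t)^{c_2(k)}\, t\, k^{-\eta \gamma(\lambda(t))/20}, \qquad \lambda(t) := 2\floor{\frac{\log t}{80 \log k}}.
\]
Dividing by $N$ and splitting the range: for $t \leq N^{1/2}$ the trivial bound $\norm{\sum_{n\leq t}\Lambda(n)f(n)\e(n\theta)} \ll t$ contributes $O(N^{-1/2})$; for $N^{1/2} \leq t \leq N$, monotonicity of $\gamma$ bounds the right-hand side above by $c_1(k)(\log N)^{c_2(k)} k^{-\eta\gamma(\lambda(N^{1/2}))/20}$. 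Since $\lambda(N^{1/2}) \to \infty$ and $\log \lambda(N^{1/2}) = \log\log N + O(1)$, the hypothesis $\gamma(\lambda)/\log\lambda \to \infty$ yields $\eta\gamma(\lambda(N^{1/2}))\log k \gg c_2(k)\log\log N$ with an implied factor tending to infinity, so this term is $o(1)$. Hence $\frac{1}{N}\max_{t\le N}\norm{\sum_{n\le t}\Lambda(n)f(n)\e(n\theta)} = o(1)$ uniformly in $\theta$, and the corollary follows.

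The only delicate point — and the \emph{main obstacle}, though a mild one — is exactly this last growth comparison: one needs the hypothesis $\gamma(\lambda)/\log\lambda \to \infty$, which is strictly stronger than the $\gamma(\lambda) \to \infty$ that makes Theorem~\ref{thm:prime} merely non-trivial, in order for the power saving $k^{-\eta\gamma(\lambda)/20}$ to absorb the polylogarithmic loss $(\log t)^{c_2(k)}$ inherent in the Mauduit--Rivat method. One must also observe that every intermediate estimate is uniform in $t \leq N$ and in the finitely many phases $\theta = h/m$, so neither maximum affects the conclusion; everything else is a direct invocation of results already stated.
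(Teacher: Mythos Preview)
Your proof is correct and follows exactly the paper's approach: the paper likewise detects the congruence by additive characters, passes from primes to $\Lambda$-weighted sums via the classical partial-summation reduction, and then invokes Theorem~\ref{thm:prime}. You are in fact slightly more careful than the paper, which does not spell out the split $t\le N^{1/2}$ versus $t>N^{1/2}$ in handling the $\max_{t\le N}$, nor the explicit check that the hypothesis $\gamma(\lambda)/\log\lambda\to\infty$ is precisely what makes the polylogarithmic loss $(\log t)^{c_2(k)}$ negligible against $k^{-\eta\gamma/20}$; the paper simply asserts the conclusion.
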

  
  Naturally, we are now interested in the case $f(n) := D(T(n))$.
    The Carry-Property for $D(T(n))$ follows from the way that the digital representation of $n$ influences $T(n)$.  
  \begin{proposition}[\cite{Muellner2017}]
  	Let $D$ be any unitary and irreducible representation of $G$. Then $D(T(.))$ satisfies Definition~\ref{def:1} for some $\eta > 0$.
  \end{proposition}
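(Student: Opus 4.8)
The plan is to use the explicit description of the invertible part $T$ recalled around~\eqref{eq_structure_a}, namely $T(nk+j)=T(n)\,g_{j,s(n)}$, together with the fact that $s$ is synchronizing, and to produce $\eta>0$ such that, for every triple $(\lambda,\alpha,\rho)\in\N^3$ with $\rho<\lambda$, the set of $\ell<k^{\lambda}$ for which \emph{some} pair $(n_1,n_2)\in\{0,\dots,k^{\alpha}-1\}^2$ violates~\eqref{eq:carry_violation} for $f(n):=D(T(n))$ has cardinality $O(k^{\lambda-\eta\rho})$. First I would unwind the recursion: for any integer $N$ written as $N=Ak^{\alpha+\rho}+R$ with $0\le R<k^{\alpha+\rho}$, iterating $T(nk+j)=T(n)g_{j,s(n)}$ a total of $\alpha+\rho$ times gives $T(N)=T(A)\cdot W(A,R)$, where $W(A,R)$ is the ordered product of the group elements $g_{\text{digit},\,s(\text{prefix})}$ picked up while reading the $\alpha+\rho$ digits of $R$, most significant first, starting from the configuration reached after reading $A$. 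The only way $W(A,R)$ depends on $A$ is through the states $s(\cdot)$ appearing before enough digits of $R$ have been processed; since the minimal automaton producing $s$ is synchronizing, there is a fixed word $w_0$, say of length $\mu$, such that after $w_0$ has been read this automaton — hence every subsequent value $s(\cdot)$ — no longer depends on $A$. Consequently $W(A,R)=P(A,R)\cdot S(R)$, where $P(A,R)$ collects the factors picked up while reading down to and including the topmost occurrence of $w_0$ inside $R$, and $S(R)$ depends only on $R$.

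Next I would isolate the ``good'' values of $\ell$. Adding $n_2<k^{\alpha}$ to $m:=\ell k^{\alpha}+n_1$ creates at most one carry into digit position $\alpha$, which then propagates upward through the digits of $\ell$ and halts at the first of those digits that is not $k-1$. Hence, if the $\ceil{\rho/2}$ lowest digits of $\ell$ are not all equal to $k-1$, then $m$ and $m':=m+n_2$ have identical digits in every position $\ge\alpha+\ceil{\rho/2}$; in particular $A:=\floor{m/k^{\alpha+\rho}}=\floor{m'/k^{\alpha+\rho}}$ and, setting $R:=m\bmod k^{\alpha+\rho}$ and $R':=m'\bmod k^{\alpha+\rho}$, the digit strings $R$ and $R'$ agree in all positions in $[\alpha+\ceil{\rho/2},\alpha+\rho)$. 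If moreover the digits of $\ell$ in positions $\ceil{\rho/2},\dots,\rho-1$ contain $w_0$ as a factor, then — since by this condition the topmost occurrence of $w_0$ in $R$ sits at a position $\ge\alpha+\ceil{\rho/2}$ — that occurrence lies inside the common high part and coincides with the topmost occurrence of $w_0$ in $R'$, so $P(A,R)=P(A,R')$, these products only involving digits shared by $R$ and $R'$ together with the same starting configuration at $A$. Plugging $T(m)=T(A)P(A,R)S(R)$ and $T(m')=T(A)P(A,R')S(R')$ into $f(m)^{H}f(m')$ and using that $D$ is a unitary representation, the factors $D(T(A))$ and $D(P(A,R))=D(P(A,R'))$ cancel, leaving $f(m)^{H}f(m')=D(S(R))^{H}D(S(R'))$. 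Running the same computation with $A$ replaced by $0$ — that is, for $R$ and $R'$ themselves, which still share their topmost copy of $w_0$ — yields $f_{\alpha+\rho}(m)^{H}f_{\alpha+\rho}(m')=f(R)^{H}f(R')=D(S(R))^{H}D(S(R'))$. Thus, for every such good $\ell$, equality holds in~\eqref{eq:carry_violation} simultaneously for all $(n_1,n_2)$.

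It remains to bound the number of $\ell<k^{\lambda}$ failing one of the two conditions. The number whose $\ceil{\rho/2}$ lowest digits are all $k-1$ is $k^{\lambda-\ceil{\rho/2}}$. For the second condition, partitioning the $\floor{\rho/2}$ positions $\ceil{\rho/2},\dots,\rho-1$ into $\floor{\rho/(2\mu)}$ disjoint blocks of length $\mu$, any $\ell$ whose digits there avoid $w_0$ in particular has none of these blocks equal to $w_0$, so the number of such $\ell$ is at most $k^{\lambda-\floor{\rho/2}}\cdot k^{\floor{\rho/2}}\rb{1-k^{-\mu}}^{\floor{\rho/(2\mu)}}=k^{\lambda}\rb{1-k^{-\mu}}^{\floor{\rho/(2\mu)}}$, which is $O(k^{\lambda-\eta\rho})$ for a suitable $\eta>0$ depending only on $k$ and $\mu$ (the estimate being trivial when $\rho$ is bounded). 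Adding the two contributions gives the bound required by Definition~\ref{def:1}, so $D(T(\cdot))$ has the Carry property and the proposition follows. The step demanding the most care is the interplay between the two conditions: the carry control must guarantee that \emph{all} the digit positions where $R$ and $R'$ may differ lie strictly below the copy of $w_0$ provided by the synchronization control — this is precisely why the digit range $[0,\rho)$ of $\ell$ is split into a lower block, used to arrest carries, and an upper block, used to force synchronization before any discrepancy is met.
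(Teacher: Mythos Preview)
Your argument is correct. The paper does not reproduce a proof of this proposition; it merely cites \cite{Muellner2017}, where the result is established. Your approach---splitting the digit window $[0,\rho)$ of $\ell$ into a lower half that arrests carry propagation and an upper half in which you force a synchronizing word for the automaton of $s$, then using the factorisation $T(N)=T(A)P(A,R)S(R)$ to cancel the $A$-dependent prefix in both $f(m')^{H}f(m)$ and $f_{\alpha+\rho}(m')^{H}f_{\alpha+\rho}(m)$---is precisely the mechanism behind the original proof in \cite{Muellner2017}. The key observation you isolate at the end, that the carry barrier must sit strictly below the synchronizing occurrence so that the topmost copy of $w_0$ in $R$ and in $R'$ coincide, is exactly the point that makes the decomposition work.

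Two minor remarks. First, in Definition~\ref{def:1} the product is $f(m+n_2)^{H}f(m)$ rather than $f(m)^{H}f(m+n_2)$; your computation has the factors swapped, but since the cancellation argument is symmetric this is harmless. Second, when you apply the factorisation with $A=0$ you are implicitly reading $R$ with leading zeros; this is legitimate because the automaton is prolongable, so $g_{0,s(0)}=e$ and leading zeros contribute trivial factors. It would be worth saying this explicitly.
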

  The much more challenging part was to show the Fourier Property.
  \begin{proposition}[\cite{Muellner2017}]\label{pr_fourier}
	  Let $D$ be a unitary, irreducible representation of $G$ different from $D_{j}$. 
	  Then $D(T(.)) \in F_{\gamma, c}$ for some linear $\gamma$ and all $c \geq 0$.
  \end{proposition}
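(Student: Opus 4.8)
The goal is to verify that $f(n)=D(T(n))$ lies in $F_{\gamma,c}$ for some linear $\gamma$ and every $c\ge 0$, that is, that
\[
\norm{k^{-\lambda}\sum_{u<k^{\lambda}} D(T(uk^{\alpha}))\,\e(-ut)}\le k^{-\gamma(\lambda)}
\]
uniformly for $\alpha\le c\lambda$ and $t\in\R$. The plan is to use the digit recursion $T(nk+j)=T(n)g_{j,s(n)}$ to rewrite the exponential sum as a product of transfer matrices indexed by the base-$k$ digits of $u$, and then to establish an operator-norm gap for these transfer matrices, away from a sparse set of ``resonant'' frequencies, using the irreducibility of $D$, the primitivity of $a(n)$, and the hypothesis $D\neq D_j$. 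The resonant frequencies are disposed of by a direct computation, in which $D\neq D_j$ is again exactly what forces cancellation.

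First I would remove the shift $k^{\alpha}$. Since $(uk^{\alpha})_k$ is $(u)_k$ followed by $\alpha$ zeros, the cocycle relation gives $T(uk^{\alpha})=T(u)\cdot c_{\alpha}(u)$ with $c_{\alpha}(u)=\prod_{i=0}^{\alpha-1}g_{0,\,s(uk^{i})}$, a factor depending only on the (finitely many possible) $s$-automaton state reached after reading $u$ and unitary under $D$. Because $s$ is synchronizing, fixing a synchronizing word $\bfw$ one has: the set of $u<k^{\lambda}$ whose expansion avoids $\bfw$ has size $O(k^{\lambda(1-\delta_0)})$ for some $\delta_0>0$ (a missing-factor estimate) and contributes negligibly; and for the remaining $u$, once $\bfw$ has been read every subsequent $s$-state -- hence $c_{\alpha}(u)$ and the values $s(uk^{i})$ -- is determined by the digits read after $\bfw$. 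Conditioning on the top digits of $u$ up to and including a first occurrence of $\bfw$, and peeling off a bounded block of least-significant digits (which pins down the state-dependent, unitary tail factor $c_{\alpha}$), it suffices to estimate, uniformly over the fixed prefix and the bounded suffix, the sum over the remaining free digits $v=v_1\cdots v_L$ of $D\big(T(\mathrm{prefix}\cdot v\cdot\mathrm{suffix})\big)\,\e(-v_1k^{e_1}t-\cdots-v_Lk^{e_L}t)$ for suitable exponents $e_i$. On this range, reading a digit $v_i$ from $s$-state $q$ multiplies $D(T(\cdot))$ on the right by $D(g_{v_i,q})$ and advances the $s$-state; summing $v_i$ over $\{0,\ldots,k-1\}$ turns the $v$-sum into a product $\mathcal{M}_{e_1}(t)\cdots\mathcal{M}_{e_L}(t)$ of transfer operators on $\C^{m}\otimes\C^{Q^{(c)}}$, where $\mathcal{M}_e(t)$ encodes the weights $D(g_{j,q})\e(-jk^{e}t)$ together with the action of the digit $j$ on the $s$-state.

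The heart of the argument is then to show that $\norm{\mathcal{M}_{e_1}(t)\cdots\mathcal{M}_{e_L}(t)}\le k^{L}(1-\delta_1)^{L/\Lambda_1}$ for constants $\delta_1>0$ and $\Lambda_1=O(1)$ independent of $\lambda,\alpha,t$, except when $t$ lies very close to a rational with small denominator (related to the period $d$), which is handled separately. Each $\mathcal{M}_e(t)$ has norm at most $k$; if the norm of a product over a block of $\Lambda_1$ consecutive positions were close to $k^{\Lambda_1}$, then by the near-equality case of the triangle inequality in $\C^{m}\otimes\C^{Q^{(c)}}$ there would be a unit vector that is simultaneously a near-eigenvector of $D$ of all the group elements appearing along the block, with eigenvalue a prescribed $k$-adic phase. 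Since $a(n)$ is primitive, for $\Lambda_1$ large enough the group elements realized by the various length-$\Lambda_1$ digit strings and the induced $s$-paths generate $G$, so this vector would be a near-common eigenvector for $D(G)$; irreducibility then forces $m=1$, and the eigenvalue relations force the resulting character to factor through $G/G_0\cong\Z/d\Z$, i.e.\ to equal $D_j$ for some $j$, contradicting the hypothesis. A quantitative version of this dichotomy gives the per-block gain, hence the claimed decay of the full product. For $t$ near the exceptional rationals one evaluates the sum directly: it is of power-of-$k$ size only when $D=D_j$ -- where $\sum_u D_j(T(uk^{\alpha}))\e(-ut)=\sum_u\e\big(u(k^{\alpha}j/d-t)\big)$ degenerates to a complete geometric sum -- and the exclusion $D\neq D_j$ forces cancellation. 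Adding the main contribution $k^{\lambda(1-\delta)}$, the $O(k^{\lambda(1-\delta_0)})$ contribution of $\bfw$-avoiding $u$, and the $O(1)$-fold sums over the peeled digits, and dividing by $k^{\lambda}$, yields the bound with $\gamma(\lambda)=\delta\lambda-O(1)$, which is linear, uniformly in $\alpha\le c\lambda$ and $t$.

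I expect the main obstacle to be precisely the block-wise norm gap, uniform in $t$, in $\alpha$, and in the forced $s$-states: one must control the \emph{near}-equality (not merely exact equality) case of the triangle inequality, show that a bad near-eigenvector cannot propagate across a block of bounded length unless $t$ resonates with the cyclic quotient $\Z/d\Z$ of $G$, and then dispose of those resonant $t$ by a separate analysis where $D\neq D_j$ is the crucial input. This is where the primitivity of $a(n)$ and the algebraic structure of $G$ (the normal subgroup $G_0$, the quotient $\Z/d\Z$, the characters $D_j$) are genuinely used, and it is the technical core of the Mauduit--Rivat method generalized to $\U_d$-valued functions as carried out in \cite{Muellner2017}; by contrast, the synchronization reduction and the cocycle bookkeeping for the trailing zeros are routine.
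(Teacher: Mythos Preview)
The paper itself does not prove this proposition; it is quoted from \cite{Muellner2017} as a black box, so there is no in-paper argument to compare against. Your sketch is broadly consistent with the strategy in that reference: unwind $D(T(uk^{\alpha}))$ via the cocycle recursion, absorb the $\alpha$ trailing zeros into a unitary factor depending only on the terminal $s$-state, reduce (after a synchronizing prefix) to a product of transfer operators on $\C^{m}\otimes\C^{Q^{(c)}}$, and extract a per-block spectral gap whose failure forces $D$ to factor through $G/G_0\cong\Z/d\Z$ and hence to be one of the $D_j$.

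One place where your outline is looser than the actual argument is the near-equality step. You jump from ``the block product has norm close to $k^{\Lambda_1}$'' to ``there is a near-common eigenvector for $D(G)$ in $\C^{m}$''. But the operators live on $\C^{m}\otimes\C^{Q^{(c)}}$, and the $s$-component is genuinely mixing; a near-extremal vector in the tensor product does not directly hand you a vector in $\C^m$ that is near-fixed by all $D(g_{j,q})$. In \cite{Muellner2017} one first uses the structure of the synchronizing automaton to decouple the $Q^{(c)}$-coordinate and reduce to operators on $\C^m$ alone, and only then appeals to irreducibility; the two reductions are not simultaneous. Likewise, your dichotomy ``either a uniform gap or $t$ is resonant with $\Z/d\Z$'' is correct in spirit, but turning it into a bound that is quantitative and uniform in $t$ (and in the pinned prefix/suffix) is precisely the technical core you flag as the main obstacle, and it is not supplied here. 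Since you explicitly defer that step to \cite{Muellner2017}, the sketch is honest about its scope, but as written it is an outline rather than a proof.
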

  We note that Proposition~\ref{pr_fourier} does not hold for $D = D_{j}$, in fact for $t = j/d$ we have
  \begin{align*}
  	\frac{1}{N} \sum_{n < N} D_{j}(n) \e(-n\ell/d) = \frac{1}{N} \sum_{n<N} \e(0) = 1.
  \end{align*}
    
    Thus we have shown~\eqref{eq_saving_primes} and are able to apply Theorem~\ref{th_density_primitive} which gives the following result.
    \begin{proposition}
    	Let $a(n)$ be a prolongable and primitive automatic sequence. Then the density of $a(n) = \alpha$ exist along the subsequence of primes.
    \end{proposition}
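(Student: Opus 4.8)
The plan is to apply Theorem~\ref{th_density_primitive} to the prime sequence $(n_\ell)_{\ell \geq 1} = (p_\ell)_{\ell \geq 1}$. Since $a(n)$ is primitive and prolongable, the representation \eqref{eq_structure_a} applies, so we may write $a(n) = f(s(n), T(n))$ with $s$ a pure synchronizing $k$-automatic sequence and $T$ taking values in a finite group $G$. It therefore suffices to verify the three hypotheses of Theorem~\ref{th_density_primitive} for the primes; granting them, the theorem yields that the densities of $s$, $T$, and hence of $a$, along the primes all exist (with the explicit values stated there).

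The first two hypotheses are soft. By the Prime Number Theorem in arithmetic progressions,
\[
\lim_{N\to\infty} \frac{1}{\pi(N)} \sum_{\substack{p \leq N\\ p \equiv r \bmod m}} 1 = \frac{\ind_{[(r,m)=1]}}{\varphi(m)} =: c(r;m),
\]
and multiplicativity of $c(\cdot\,;\cdot)$ in the modulus follows from multiplicativity of $\varphi$ together with $(r, m_1 m_2) = 1 \iff (r,m_1) = (r,m_2) = 1$; hence the primes distribute regularly within residue classes. For \eqref{eq_not_sparse}, one observes that $c(m; k^\lambda)$ is supported on integers coprime to $k$, where it equals $1/\varphi(k^\lambda)$, so
\[
\sum_{\substack{m < k^\lambda\\ m \notin S_\lambda}} c(m;k^\lambda) \;\leq\; \frac{\#\{ m \in [0,k^\lambda) : m \notin S_\lambda\}}{\varphi(k^\lambda)} \;=\; \frac{o(k^\lambda)}{k^\lambda \prod_{p\mid k}(1 - 1/p)} \;\longrightarrow\; 0,
\]
using $\#\{m < k^\lambda : m \notin S_\lambda\} = o(k^\lambda)$ from \cite[Lemma 2.2]{Deshouillers2015} and $\varphi(k^\lambda)/k^\lambda = \prod_{p\mid k}(1-1/p)$.

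The substantive step is hypothesis \eqref{eq_D_not_D_j}: for every irreducible unitary representation $D$ of $G$ different from the $D_j$, and all $\lambda$ and $m < k^\lambda$, one needs
\[
\lim_{L\to\infty} \frac{1}{L} \sum_{\substack{\ell \leq L\\ p_\ell \equiv m \bmod k^\lambda}} D(T(p_\ell)) = 0.
\]
I would set $f(n) := D(T(n))$ and invoke that $f$ has the Carry property (Definition~\ref{def:1}) and, because $D \neq D_j$, lies in $F_{\gamma,c}$ for a linear $\gamma$, i.e. has the Fourier property (Proposition~\ref{pr_fourier}). These are precisely the inputs required by Theorem~\ref{thm:prime}. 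Combining it with the standard transfer from sums weighted by $\Lambda$ to sums over primes (the Corollary following Proposition~\ref{pr_fourier}) and detecting the residue class via $\frac{1}{k^\lambda}\sum_{0\leq h < k^\lambda}\e(h(n-m)/k^\lambda) = \ind_{[n\equiv m \bmod k^\lambda]}$, one obtains $\lim_{N\to\infty}\norm{\pi(N)^{-1} \sum_{p < N,\; p\equiv m \bmod k^\lambda} f(p)} = 0$, which is \eqref{eq_D_not_D_j} after reindexing primes by $\ell$ and noting that $\pi(N)^{-1}\sum_{p\leq N}$ and $L^{-1}\sum_{\ell\leq L}$ agree asymptotically. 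With all three hypotheses verified, Theorem~\ref{th_density_primitive} gives the claim. The only genuine obstacle is establishing the Fourier property of $T$ (equivalently, the estimate \eqref{eq_saving_primes}), which is the deep content of \cite{Muellner2017} — a generalization of the Mauduit--Rivat method \cite{Mauduit2015} to group-valued digital functions; everything else is bookkeeping.
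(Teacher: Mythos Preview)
Your proposal is correct and follows essentially the same route as the paper: verify the three hypotheses of Theorem~\ref{th_density_primitive} for the prime sequence using the Prime Number Theorem in arithmetic progressions for regular distribution and \eqref{eq_not_sparse}, and then establish \eqref{eq_D_not_D_j} by invoking the Carry and Fourier properties of $D(T(\cdot))$ (the latter via Proposition~\ref{pr_fourier}) together with Theorem~\ref{thm:prime}, the additive-character detection of residue classes, and the standard $\Lambda$-to-primes transfer. The only cosmetic difference is ordering: in the paper the Corollary combining these ingredients is stated before, not after, Proposition~\ref{pr_fourier}.
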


  \medskip

 Finally we prove Theorem~\ref{th_primes_mod_m} saying that there exists $m$ with
 $d_{\log}(a(p_n), \alpha) = d_{\log}(a(n_\ell),\alpha)$, where $n_\ell$ runs through
 all positive integers with $(n,m) = 1$.

	\begin{proof}
		We first use Theorem~\ref{Th1} to find $\mathcal{B} = \{b_1(n),\ldots, b_s(n)\}$.
		Each of the $b_i$ can be written as $b_i(n) = f_i(s_i(n), T_i(n))$, with some $d_i = d(b_i)$.
		We choose now $m = k \cdot \prod d_i$ and let $n_{\ell}$ denote the sequence of integers that are coprime to $m$.
		
		We find immediately that $n_{\ell}$ distributes regularly within residue classes and that $c_{n_{\ell}}(m; h)$ fulfills~\eqref{eq_not_sparse}.
		Furthermore,~\eqref{eq_D_not_D_j} is an immediate consequence of Proposition~\ref{pr_fourier}.
		Thus, we can apply for any $b_i$ Theorem~\ref{th_density_primitive} both for the subsequence along $\P$ and along $n_{\ell}$.
		
		A simple computation shows that $$c_{\P}(r; k^{\lambda}) = c_{n_{\ell}}(r; k^{\lambda}) = \ind_{(r,k) = 1} \frac{1}{\varphi(k) k^{\lambda-1}}$$ and $$c_{\P}(r; d_i) = c_{n_{\ell}}(r; d_i) = \ind_{(r, d_i) = 1} \frac{1}{\varphi(d_i)}.$$
		This shows immediately that $d(b_i(p_n), \alpha) = d(b_i(n_{\ell}), \alpha)$ for all $1\leq i \leq s$.
		The result follows now directly from Equation~\eqref{eq_d_log}.
	\end{proof}

We remark that Theorem~\ref{th_primes_mod_m} can be also used to observe zero densities.
Namely, we have $d_{log}(a(mn + r), \alpha) = 0$ if and only if  $d_{log}(a(p_n), \alpha) = 0$
for all $r$ with $(r,m) = 1$.

  \section{The subsequence along squares}\label{sec:squares}
  
	The goal of this section is to compute the density of primitive automatic sequences along squares.
	There are already some interesting results in this direction that we want to mention here.
	The first and ground-breaking result is due to Mauduit and Rivat~\cite{Mauduit2009}, where they showed that the Thue-Morse sequence takes values $0$ and $1$ with density $\frac{1}{2}$ along squares. This result relies on $L^{1}$ estimates of the Fourier-Transform and is thus not possible to extend to general automatic sequences. 
	However, it was generalized to invertible automatic sequences by Drmota and Morgenbesser~\cite{Drmota2012}.
	Moreover, there are results about the density of blocks along squares (i.e. normality) for the Thue-Morse sequence by Drmota, Mauduit and Rivat~\cite{Drmota2019} and, more generally, strongly block-additive functions mod $m$ by the last author~\cite{Muellner2018}.
	
	Finally, and most important for this section, there is a new result by Mauduit and Rivat~\cite{Mauduit2018} which gives density results along squares, for all functions satisfying the Carry-Property and the Fourier-Property (again in the stricter sense). In particular, they only consider complex-valued sequences $f$, and a stronger Carry-Property, i.e. $\eta = 1$. 
	
	The main result of this section is the following theorem.
	\begin{theorem}\label{th_squares}
		Let $a(n)$ be a prolongable and primitive automatic sequence.
		With the notation from~\eqref{eq_structure_a}, we write $a(n) = f(s(n), T(n))$.
		Then there exist the densities $d_{q} = d(s(n^2), q)$ and $d_g = d(T(n^2), g)$.
		Furthermore, we have
		\begin{align*}
			d(a(n^2), \alpha) = \sum_{q \in Q, g \in G} d_{q} \cdot d_g \cdot \ind_{[f(q, g) = \alpha]}.
		\end{align*}
	\end{theorem}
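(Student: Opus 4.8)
The statement is, in essence, Theorem~\ref{th_density_primitive} applied to the regularly varying sequence $n_\ell=\ell^2$. So the plan is to verify the three hypotheses of that theorem for $(n_\ell)=(\ell^2)$: that squares distribute regularly within residue classes; the sparseness condition~\eqref{eq_not_sparse}; and the cancellation estimate~\eqref{eq_D_not_D_j} for every irreducible unitary representation $D$ distinct from the $D_j$. Once these are in place, Theorem~\ref{th_density_primitive} immediately provides the densities $d_q=d(s(n^2),q)$ and $d_g=d(T(n^2),g)$ together with the product formula $d(a(n^2),\alpha)=\sum_{q,g}d_q\,d_g\,\ind_{[f(q,g)=\alpha]}$. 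The first two hypotheses are elementary; the substance, and the main obstacle, lies in~\eqref{eq_D_not_D_j}.

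For the first hypothesis, note that $\ell\mapsto\ell^2\bmod h$ is periodic of period $h$, so $c(m;h)=\tfrac1h\#\{0\le j<h:j^2\equiv m\bmod h\}$ exists and is a genuine density, and the multiplicativity of $h\mapsto c(m;h)$ on coprime moduli is the Chinese Remainder Theorem; we will also use, exactly as in the prime case, that the modulus $d$ with $G/G_0\cong\Z/d\Z$ is coprime to $k$. For~\eqref{eq_not_sparse}, recall from the synchronization structure (see~\cite{Byszewski2020} and the discussion of $S_\lambda$ in Section~\ref{sec:primes}) that there is a fixed word $\bfw$ over $\Sigma_k$ such that any integer whose base-$k$ expansion contains $\bfw$ is synchronizing; hence the non-synchronizing residues modulo $k^\lambda$ avoid $\bfw$, and $\#\{m<k^\lambda: m\notin S_\lambda\}\ll\sigma^\lambda$ for some $\sigma<k$. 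Since $\sum_{m<k^\lambda,\,m\notin S_\lambda}c(m;k^\lambda)=k^{-\lambda}\#\{0\le j<k^\lambda:j^2\bmod k^\lambda\notin S_\lambda\}$, one splits $j$ according to the exact power of each prime dividing $k$ that divides $j$, and in each range bounds the number of admissible $j$ by combining the exponential bound $\ll\sigma^\lambda$, the trivial bound on residues divisible by a fixed prime power, and the classical count of square roots modulo prime powers; optimising the cut-off makes the total $o(k^\lambda)$, which is~\eqref{eq_not_sparse}.

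The estimate~\eqref{eq_D_not_D_j} asks that $\tfrac1L\sum_{\ell\le L,\ \ell^2\equiv m\bmod k^\lambda}D(T(\ell^2))\to 0$ for $D$ not among the $D_j$. The solution set of $\ell^2\equiv m\bmod k^\lambda$ is a union of residue classes mod $k^\lambda$, so detecting it by additive characters modulo $k^\lambda$ reduces the claim to showing that sums of the shape $\sum_{\ell\le L}D(T(\ell^2))\,\e(\beta\ell)$ are $o(L)$ for all $\beta\in\R$. This is precisely the type of estimate supplied by the Mauduit--Rivat digit-distribution method along squares~\cite{Mauduit2018}: the function $f=D(T(\cdot))$ satisfies the Carry property of Definition~\ref{def:1} for some $\eta\in(0,1]$ and, because $D$ is not one of the $D_j$, the Fourier property of Definition~\ref{def:2} with a linear $\gamma$, both facts being already established in~\cite{Muellner2017} and recalled in Section~\ref{sec:primes}. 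The point requiring work is that~\cite{Mauduit2018} is stated only for complex-valued $f$ with the strong carry hypothesis $\eta=1$, so one has to rerun that argument for the $\U_m$-valued function $f=D(T(\cdot))$ with $\eta\in(0,1]$ and carry the harmless linear phase $\e(\beta\ell)$ through the estimate --- exactly the passage that, in the prime case, leads from the scalar Mauduit--Rivat bound to Theorem~\ref{thm:prime}. This is the main technical obstacle; it is not short, but the proof follows~\cite{Mauduit2018} essentially line by line, with the same modifications as in~\cite{Muellner2017}, yielding $\norm{\sum_{\ell\le L}D(T(\ell^2))\,\e(\beta\ell)}=o(L)$ (in fact with a power-type saving, since $\gamma$ is linear). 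Granting this, every term in the finite character decomposition of $\tfrac1L\sum_{\ell\le L,\ \ell^2\equiv m\bmod k^\lambda}D(T(\ell^2))$ tends to $0$, so~\eqref{eq_D_not_D_j} holds.

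With all three hypotheses verified, Theorem~\ref{th_density_primitive} applies to $n_\ell=\ell^2$ and gives directly the existence of $d_q$ and $d_g$ and the product formula, completing the proof.
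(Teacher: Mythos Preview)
Your proposal is correct and follows the same overall strategy as the paper: verify the three hypotheses of Theorem~\ref{th_density_primitive} for $n_\ell=\ell^2$, with the substantive work being the matrix-valued, weak-carry ($\eta\in(0,1]$) extension of the Mauduit--Rivat squares estimate~\cite{Mauduit2018}, which the paper carries out explicitly as Theorem~\ref{th_squares_general}. Your identification of this as the main obstacle, and your reduction of~\eqref{eq_D_not_D_j} to twisted sums $\sum_{\ell\le L}D(T(\ell^2))\e(\beta\ell)$ via additive characters, matches the paper exactly.

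The one place where you diverge is the verification of~\eqref{eq_not_sparse}. You rewrite the sum as $k^{-\lambda}\#\{j<k^\lambda:j^2\bmod k^\lambda\notin S_\lambda\}$, stratify $j$ by its $p$-adic valuation for each prime $p\mid k$, and balance the exponential bound $\#(\Sigma_k^\lambda\setminus S_\lambda)\ll\sigma^\lambda$ against the trivial count of $j$ with large valuation; this gives a power saving directly. The paper instead (Proposition~\ref{prop: squares}) uses Hensel lifting to show that $c(m;k^{\lambda+\ell})=k^{-\ell}c(m;k^\lambda)$ once $m$ avoids high prime-power divisibility, proves that this set carries almost all the mass, and then splits $\lambda=\lambda_1+\lambda_2$ so that the low $\lambda_1$ digits fix $c(m;\cdot)$ while the top $\lambda_2$ digits are free to be synchronizing. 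Your argument is more elementary and gives a cleaner quantitative bound; the paper's argument is more structural and makes the Hensel mechanism explicit, which is reused later in the computability discussion (Theorem~\ref{th_squares_primitive}).
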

	
	Naturally, the idea is to apply Theorem~\ref{th_density_primitive} for the subsequence $n_{\ell} = \ell^2$.
	Thus, the proof splits into two parts. We first aim to show~\eqref{eq_not_sparse} and then~\eqref{eq_D_not_D_j}.
	
	\subsection{Synchronizing automatic sequence along squares}
	The main result of this subsection is the following proposition.
	\begin{proposition}\label{prop: squares}
		The subsequence along squares distributes regularly within residue classes and fulfill~\eqref{eq_not_sparse}.
	\end{proposition}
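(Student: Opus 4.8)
The plan is to verify the two assertions in turn. For the first, observe that for every modulus $h$ the map $\ell\mapsto \ell^2\bmod h$ is $h$-periodic, so that
\[
  \lim_{L\to\infty}\frac{\#\{\ell\le L:\ell^2\equiv m\bmod h\}}{L}=\frac{N(m;h)}{h}=:c(m;h),\qquad N(m;h):=\#\{r\bmod h:r^2\equiv m\bmod h\},
\]
exists; and the Chinese Remainder Theorem gives $N(m;h_1h_2)=N(m;h_1)\,N(m;h_2)$ whenever $\gcd(h_1,h_2)=1$, hence the required multiplicativity of $h\mapsto c(m;h)$. It thus remains to establish~\eqref{eq_not_sparse}. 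Writing $B:=[0,k^\lambda)\setminus S_\lambda$ and noting $c(m;k^\lambda)=N(m;k^\lambda)/k^\lambda$, the quantity to be controlled is
\[
  \sum_{\substack{m<k^\lambda\\ m\notin S_\lambda}}c(m;k^\lambda)=\frac{1}{k^\lambda}\sum_{m\in B}N(m;k^\lambda)=\frac{1}{k^\lambda}\,\#\{\ell<k^\lambda:\ell^2\bmod k^\lambda\in B\}.
\]
Here lies the only real difference with the prime case: there $c(\cdot\,;k^\lambda)$ is uniformly of size $O(k^{-\lambda})$ on its support, so $\#B/k^\lambda\to 0$ suffices, whereas for squares $c(m;k^\lambda)$ can be as large as a constant times $k^{-\lambda/2}$ (for instance at $m=0$), and one must in addition control the sizes of the fibres of $\ell\mapsto\ell^2\bmod k^\lambda$.

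First I would record that $\#B$ decays geometrically. Fix a synchronizing word $\bfw_0$, of some fixed length $L_0$, for the automaton defining $S$ (one exists since its column number is $1$). Any finite word over $\Sigma_k$ containing $\bfw_0$ as a factor is itself synchronizing, since once $\bfw_0$ has been read the state reached no longer depends on the starting state; hence $m\in S_\lambda$ as soon as $(m)_k$ contains $\bfw_0$. Consequently $B$ is contained in the set of $m<k^\lambda$ whose base-$k$ expansion avoids $\bfw_0$, and a routine pattern-avoidance estimate (cutting the expansion into blocks of length $L_0$) bounds its cardinality by $O(\theta^\lambda)$ for some $\theta<k$, so $\epsilon_\lambda:=\#B/k^\lambda$ decays geometrically in $\lambda$.

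The main step is then a second moment bound for the fibres,
\[
  \sum_{m<k^\lambda}N(m;k^\lambda)^2=\#\{(\ell,\ell')\in(\Z/k^\lambda\Z)^2:\ell^2\equiv\ell'^2\bmod k^\lambda\}\ll_k k^\lambda\,(1+\lambda)^{C_k}
\]
for a constant $C_k$ depending only on $k$. Indeed $\ell^2\equiv\ell'^2\bmod k^\lambda$ means $k^\lambda\mid(\ell-\ell')(\ell+\ell')$, and for fixed $\ell$ the number of admissible $\ell'$ can be counted prime by prime through the Chinese Remainder Theorem (with some extra care at the prime $2$ when $2\mid k$); summing the outcome over $\ell$ leaves essentially $k^\lambda\sum_{u<k^\lambda}\gcd(2u,k^\lambda)\ll_k k^\lambda(1+\lambda)^{C_k}$. (Equivalently this follows from Parseval applied to the indicator function of $B$ together with the classical Gauss sum bound $\abs{\sum_{\ell\bmod q}\e(h\ell^2/q)}\ll(\gcd(h,q)\,q)^{1/2}$.) Combining the two estimates by Cauchy--Schwarz,
\[
  \#\{\ell<k^\lambda:\ell^2\bmod k^\lambda\in B\}=\sum_{m\in B}N(m;k^\lambda)\le(\#B)^{1/2}\Bigl(\sum_{m<k^\lambda}N(m;k^\lambda)^2\Bigr)^{1/2}\ll_k\bigl(\#B\cdot k^\lambda(1+\lambda)^{C_k}\bigr)^{1/2},
\]
and hence $\sum_{m\notin S_\lambda}c(m;k^\lambda)\ll_k\bigl((1+\lambda)^{C_k}\epsilon_\lambda\bigr)^{1/2}\to 0$ as $\lambda\to\infty$, since $\epsilon_\lambda$ tends to $0$ geometrically. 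This is~\eqref{eq_not_sparse}. The step I expect to demand the most care is this second moment bound --- in particular the prime-$2$ bookkeeping and the check that its polynomial-in-$\lambda$ loss is harmless, which it is precisely because the pattern-avoidance estimate makes $\epsilon_\lambda$ decay geometrically rather than merely tend to $0$.
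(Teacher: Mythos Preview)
Your argument is correct, but it takes a genuinely different route from the paper's. The paper proceeds arithmetically: it uses Hensel's Lemma to obtain the lifting identity $c(m;k^{\lambda+\ell})=c(m;k^\lambda)/k^\ell$ whenever $m$ is not too divisible by the primes dividing $k$, shows that almost all of the mass of $c(\cdot\,;k^\lambda)$ sits on such $m$, and then runs a two-scale decomposition $\lambda=\lambda_1+\lambda_2$: the low $\lambda_1$ digits are handled by the lifting identity, and the high $\lambda_2$ digits contribute a synchronizing prefix with probability $|S_{\lambda_2}|/k^{\lambda_2}\to 1$. By contrast, you reinterpret $\sum_{m\notin S_\lambda}c(m;k^\lambda)$ as the probability that a uniform $\ell\bmod k^\lambda$ has $\ell^2\bmod k^\lambda\notin S_\lambda$, bound the fibre sizes in $L^2$ by a direct count of solutions to $\ell^2\equiv\ell'^2$, and then Cauchy--Schwarz against the geometric decay of $|B|$. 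Your approach is cleaner and more robust (it would transfer to any polynomial subsequence once the analogous second-moment bound is checked), but it genuinely needs the \emph{geometric} decay of $\epsilon_\lambda$ to beat the polynomial loss $(1+\lambda)^{C_k}$; the paper's argument, in contrast, only needs the softer fact $|S_\lambda|/k^\lambda\to 1$. On the other hand, the paper's Hensel-based description of $c(m;k^\lambda)$ is not wasted work: it is reused later (Theorem~\ref{th_squares_primitive}) to compute the densities explicitly and show they are rational when $k$ is a prime power.
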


We first observe that 
		\begin{align*}
			c(m; h) := \frac{\{0 \leq x < h: x^2 \equiv m \bmod h\}}{h} \,.
		\end{align*}
		This already shows that the subsequence along squares distributes regularly within residue classes by the Chinese Remainder Theorem. It thus remains to prove \eqref{eq_not_sparse}. 
		
As $c(m; h)$ is multiplicative in the second coordinate, we are interested in $c(m; p^{\alpha})$, where $p$ is a prime.
	We will use the following results which follow directly from Hensel's Lemma.
	\begin{lemma}\label{le_cor_hensel}
		Let $p$ be an odd prime and $\alpha \geq 1$.
		Then we have for $m \not \equiv 0 \bmod p^{\alpha}$ and any $\ell \geq 0$,
		\begin{align*}
			c(m; p^{\alpha +\ell}) = \frac{c(m; p^{\alpha})}{p^{\ell}}.
		\end{align*}
		Furthermore, if $\alpha \geq 3, m \not \equiv 0 \bmod p^{\alpha-2}$, then for any $\ell \geq 0$,
		\begin{align*}
			c(m; 2^{\alpha +\ell}) = \frac{c(m; 2^{\alpha})}{2^{\ell}}.
		\end{align*}
	\end{lemma}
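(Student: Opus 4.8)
The plan is to reduce the claim to an elementary count of square roots modulo prime powers, and then to feed that count into Hensel's Lemma. Writing
\[
N(m;h):=\#\{x\in\Z/h\Z:\ x^{2}\equiv m \bmod h\},
\]
one has $c(m;h)=N(m;h)/h$, so the asserted identity $c(m;p^{\alpha+\ell})=c(m;p^{\alpha})/p^{\ell}$ is equivalent to $N(m;p^{\alpha+\ell})=N(m;p^{\alpha})$. Set $c:=v_{p}(m)$. The hypothesis $m\not\equiv 0\bmod p^{\alpha}$ (for $p$ odd) forces $c\le\alpha-1$, and the hypothesis $m\not\equiv 0\bmod 2^{\alpha-2}$ (for $p=2$) forces $c\le\alpha-3$; in particular $c<\alpha$ in both cases.

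First I would dispose of the case where $c$ is odd: then $x^{2}\equiv m\bmod p^{k}$ has no solution for any $k>c$, since $v_{p}(x^{2})$ is even, so $N(m;p^{k})=0$ for all $k\ge\alpha$ and there is nothing to prove. Assume now $c=2e$ is even. For every $k>c$, any solution $x$ of $x^{2}\equiv m\bmod p^{k}$ has $v_{p}(x)=e$, so the substitution $x=p^{e}y$ rewrites the congruence as $y^{2}\equiv u\bmod p^{k-2e}$ with $u:=m\,p^{-2e}$ a $p$-adic unit; counting the admissible $y$ modulo $p^{k-e}$ gives
\[
N(m;p^{k})=p^{e}\,U(u;p^{k-2e}),\qquad U(u;p^{j}):=\#\{y\in\Z/p^{j}\Z:\ y^{2}\equiv u\bmod p^{j}\}.
\]
For $k\in\{\alpha,\alpha+\ell\}$ the exponent $j=k-2e$ is at least $\alpha-2e\ge 1$ for $p$ odd and at least $\alpha-2e\ge 3$ for $p=2$.

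It then remains to invoke the classical consequence of Hensel's Lemma that, for a $p$-adic unit $u$, the quantity $U(u;p^{j})$ is independent of $j$ once $j\ge 1$ (for $p$ odd), respectively $j\ge 3$ (for $p=2$). For odd $p$ this is immediate: a root of $X^{2}-u$ modulo $p$ is simple because $p\nmid 2u$, hence lifts uniquely to every power of $p$, so $U(u;p^{j})=1+\left(\tfrac{u}{p}\right)\in\{0,2\}$. For $p=2$ one has $U(u;2^{j})=4$ if $u\equiv 1\bmod 8$ and $U(u;2^{j})=0$ otherwise. Applying this with $j=\alpha-2e$ and $j=\alpha+\ell-2e$ yields $N(m;p^{\alpha+\ell})=N(m;p^{\alpha})$, which is the lemma. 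I expect the only point needing genuine care to be the case $p=2$, where the naive form of Hensel's Lemma does not apply since $X^{2}-u$ has derivative $\equiv 0\bmod 2$; this is handled by the refined lifting identity $(x_{0}+2^{j-1})^{2}\equiv x_{0}^{2}+2^{j}\bmod 2^{j+1}$, valid for odd $x_{0}$ and $j\ge 3$, which shows that the reduction map between the solution sets modulo $2^{j+1}$ and modulo $2^{j}$ is surjective, hence bijective because both sets have cardinality $4$ — alternatively one may simply quote $(\Z/2^{j}\Z)^{\times}\cong\Z/2\Z\times\Z/2^{j-2}\Z$.
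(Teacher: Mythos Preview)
Your proof is correct and follows the same route the paper indicates: the paper's entire proof is the single sentence ``these results follow directly from Hensel's Lemma,'' and your argument is precisely a careful unpacking of that claim, handling the non-unit case by factoring out $p^{2e}$ and then invoking the standard lifting (resp.\ the structure of $(\Z/2^{j}\Z)^{\times}$) for the unit part. One small stylistic remark: your clause ``hence bijective because both sets have cardinality $4$'' is redundant, since once you quote the group structure $(\Z/2^{j}\Z)^{\times}\cong\Z/2\Z\times\Z/2^{j-2}\Z$ the equality $U(u;2^{j})=4$ for $u\equiv 1\bmod 8$ is immediate and the separate surjectivity argument is not needed.
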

	\begin{corollary}\label{cor_lift}
		Let $k = p_1^{\alpha_1} \cdot \ldots \cdot p_s^{\alpha_s}$, where $p_i \in \P$.
		Let $\lambda \in \N$ and $a$ such that for all $i$,
		\begin{align*}
			m \not \equiv 0 \bmod p_i^{\lambda \alpha_i - 2}.
		\end{align*}
		Then, $c(m; k^{\lambda + \ell}) = \frac{c(m; k^{\lambda})}{k^{\ell}}$ for all $\ell \geq 0$.
	\end{corollary}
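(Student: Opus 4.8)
The plan is to reduce the statement to the prime-power case and then invoke Lemma~\ref{le_cor_hensel}. The key point is that $c(m;h)$ depends on $m$ only modulo $h$ and, as recorded above, is multiplicative in its second argument, so from the factorization $k^{\lambda+\ell} = \prod_{i=1}^{s} p_i^{(\lambda+\ell)\alpha_i}$ into pairwise coprime prime powers we obtain
\begin{align*}
c\bigl(m;k^{\lambda+\ell}\bigr) = \prod_{i=1}^{s} c\bigl(m;p_i^{(\lambda+\ell)\alpha_i}\bigr), \qquad c\bigl(m;k^{\lambda}\bigr) = \prod_{i=1}^{s} c\bigl(m;p_i^{\lambda\alpha_i}\bigr),
\end{align*}
while $k^{\ell} = \prod_{i=1}^{s} p_i^{\ell\alpha_i}$. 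Hence it suffices to establish, for each $i$, the single-prime identity
\begin{align*}
c\bigl(m;p_i^{(\lambda+\ell)\alpha_i}\bigr) = \frac{c\bigl(m;p_i^{\lambda\alpha_i}\bigr)}{p_i^{\ell\alpha_i}},
\end{align*}
and then multiply over $i$.

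First I would dispose of the degenerate cases: if $\lambda\alpha_i \le 2$ for some $i$, the hypothesis $m\not\equiv 0\bmod p_i^{\lambda\alpha_i-2}$ cannot be satisfied (it would require $m$ to be nonzero modulo $1$), so the corollary holds vacuously; thus we may and do assume $\lambda\alpha_i \ge 3$ for every $i$. Now fix $i$, write $p=p_i$, $\alpha=\lambda\alpha_i\ge 3$, and $\ell'=\ell\alpha_i$, so that $(\lambda+\ell)\alpha_i=\alpha+\ell'$ and the desired identity becomes $c(m;p^{\alpha+\ell'})=c(m;p^{\alpha})/p^{\ell'}$.

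If $p$ is odd, then $p^{\lambda\alpha_i-2}\nmid m$ implies a fortiori $p^{\alpha}\nmid m$, i.e. $m\not\equiv 0\bmod p^{\alpha}$, and the first part of Lemma~\ref{le_cor_hensel}, applied with exponent shift $\ell'$, gives exactly $c(m;p^{\alpha+\ell'})=c(m;p^{\alpha})/p^{\ell'}$. If $p=2$, then $\alpha\ge 3$ and the hypothesis is precisely $m\not\equiv 0\bmod 2^{\alpha-2}$, which is the assumption of the second part of Lemma~\ref{le_cor_hensel}; applying it with shift $\ell'$ again yields $c(m;2^{\alpha+\ell'})=c(m;2^{\alpha})/2^{\ell'}$. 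Multiplying the single-prime identities over $i$ completes the proof.

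The argument is essentially bookkeeping layered on Lemma~\ref{le_cor_hensel}, and I expect no real obstacle; the only subtlety worth flagging is the behaviour at the prime $2$. This is exactly why the hypothesis is stated with the slack exponent $p_i^{\lambda\alpha_i-2}$ rather than $p_i^{\lambda\alpha_i}$: for odd primes it is more than what is needed, but it is exactly the input required by the $2$-adic half of Lemma~\ref{le_cor_hensel}.
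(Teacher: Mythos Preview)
Your proof is correct and is exactly the argument the paper has in mind: the corollary is stated without an explicit proof because it follows immediately from the multiplicativity of $c(m;\cdot)$ in the second argument together with Lemma~\ref{le_cor_hensel} applied to each prime power, which is precisely what you spell out. Your handling of the degenerate range $\lambda\alpha_i\le 2$ and your remark that the slack exponent $\lambda\alpha_i-2$ is calibrated to the $2$-adic case are accurate refinements that the paper leaves implicit.
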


We are now ready to prove Proposition~\ref{prop: squares}. 
	
	\begin{proof}[Proof of Proposition \ref{prop: squares}]This will allow us to show the following result.
		\begin{align}\label{eq_most_qr}
			\lim_{\lambda \to \infty} \sum_{\substack{m<k^{\lambda}\\\forall i: m \not \equiv 0 \bmod p_i^{\lambda \alpha_i -2} }} c(m; k^{\lambda}) = 1.
		\end{align}
		By the Chinese Remainder Theorem it is sufficient to show
		\begin{align*}
			\lim_{\lambda \to \infty} \sum_{\substack{m<p_i^{\alpha_i \lambda}\\m \not \equiv 0 \bmod p_i^{\lambda \alpha_i -2}}} c(m; p_i^{\alpha_i \lambda}) = 1
		\end{align*}
		or equivalently
		\begin{align*}
			\lim_{\lambda \to \infty} \sum_{\substack{m<p_i^{2}}} c(m p_i^{\alpha_i \lambda -2}; p_i^{\alpha_i \lambda}) = 0.
		\end{align*}
		We conclude the proof of~\eqref{eq_most_qr} by noting
		\begin{align*}
			&\abs{\{x < p_i^{\alpha_i \lambda}: x^2 \equiv m p_i^{\alpha_i \lambda-2} \bmod p_i^{\alpha_i \lambda}\}}\\
			 &\qquad \qquad \leq \abs{\{x < p_i^{\alpha_i \lambda}: x \equiv 0 \bmod p_i^{\floor{\alpha_i \lambda/2 -1}}\}} 
			 = p_i^{\ceil{\alpha_i \lambda/2} +1} = o_{\lambda \to \infty}(p_i^{\alpha_i \lambda}).
		\end{align*}
		
		We find by Corollary~\ref{cor_lift}
		\begin{align*}
			\sum_{m \in S_{\lambda_1 + \lambda_2}} c(m; k^{\lambda_1 + \lambda_2}) &\geq  \sum_{\substack{m_1 < k^{\lambda_1}\\ \forall i: m_1 \not \equiv 0 \bmod p_i^{\lambda_1 \alpha_i-2}}} \, \sum_{\substack{m_2 < k^{\lambda_2}\\ m_2 k^{\lambda_1} + m_1 \in S_{\lambda_1 + \lambda_2}}} c(m_2 k^{\lambda_1} + m_1; k^{\lambda_1 + \lambda_2})\\
				&= \sum_{\substack{m_1 < k^{\lambda_1}\\ \forall i: m_1 \not \equiv 0 \bmod p_i^{\lambda_1 \alpha_i-2}}} c(m_1; k^{\lambda_1 + \lambda_2}) \cdot \frac{1}{k^{\lambda_2}} \sum_{\substack{m_2 < k^{\lambda_2}\\ m_2 k^{\lambda_1} + m_1 \in S_{\lambda_1 + \lambda_2}}} 1.
		\end{align*}
		We recall that any concatenation of a synchronizing word with any other word is again synchronizing.
		Thus, $m_2 \in S_{\lambda_2}$ implies $m_2 k^{\lambda_1} + m_1 \in S_{\lambda_1 + \lambda_2}$ giving in total
		\begin{align*}
			\sum_{m \in S_{\lambda_1 + \lambda_2}} c(m; k^{\lambda_1 + \lambda_2}) \geq \rb{\sum_{\substack{m_1 < k^{\lambda_1}\\ \forall i: m_1 \not \equiv 0 \bmod p_i^{\lambda_1 \alpha_i-2}}} c(m_1; k^{\lambda_1})} \cdot \rb{\frac{1}{k^{\lambda_2}}\sum_{m_2  \in S_{\lambda_2}} 1},
		\end{align*}
		which finishes the proof as both factors tend to $1$ for $\lambda_1, \lambda_2 \to \infty$.
	\end{proof}

	\subsection{Generalizing the result of Mauduit and Rivat for squares}
	The main result of this section is the following theorem.
	\begin{theorem}\label{th_squares_general}
		Let $\gamma: \R \to \R$ be a nondecreasing function satisfying $\lim_{\lambda\to\infty}\gamma(\lambda) = \infty$, and let $f: \N \to \U_d$ be a function satisfying Definition~\ref{def:1} for some $\eta > 0$ and $f \in F_{\gamma, c}$ for some $c \geq 18$ in Definition~\ref{def:2}. Then for any $\theta \in \R$, we have
		\begin{align*}
			\norm{\sum_{0<n \leq x} f(n^2) \e(n \theta)}_2 \ll_{d,f,k} (\log x)^{\omega(q) + 2} \rb{x k^{-\frac{\eta \gamma(2 \floor{(3 \log x)/(100 \log k)})}{56}}},
		\end{align*}
		where the absolute implied constant only depends on $d,f$ and $k$.
	\end{theorem}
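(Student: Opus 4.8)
The plan is to transpose Mauduit and Rivat's treatment of digital functions along squares in~\cite{Mauduit2018} to the present matrix-valued framework, in exactly the way that Theorem~\ref{thm:prime} transposes the corresponding result of~\cite{Mauduit2015} for primes (compare the proof of~\cite[Theorem 4.3]{Muellner2017}). Two features prevent us from simply quoting~\cite{Mauduit2018}: first, $f$ now takes values in $\U_d$, so the differencing steps produce \emph{non-commutative} products of the shape $f((n+r)^2)^H f(n^2)$, which must be handled through the trace and the Frobenius norm $\norm{\cdot}_2$ rather than by ordinary complex conjugation; second, we only assume the weaker Carry property with some $\eta\in(0,1]$ in Definition~\ref{def:1}, so the exponents appearing along the way must be re-optimized, which is what degrades the final exponent to $\eta\,\gamma(\cdots)/56$.

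First I would use the Carry property to localise $f(n^2)$ to a bounded window of digits. After a dyadic splitting of the range of $n\le x$ (so that the positions of the relevant digits of $n^2$ are under control), Definition~\ref{def:1} allows us, for a truncation parameter $\rho$ to be fixed at the very end, to replace $f(n^2)$ by its truncation $f_{\alpha+\rho}(n^2)$ for all but $O(x\,k^{-\eta\rho})$ of the integers $n$; since $f$ is unitary this costs only $O(x\,k^{-\eta\rho})$ in the estimate. This is the only point at which $\eta$ enters, and it already explains the shape of the savings.

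The core is then a van der Corput / Weyl differencing argument. Writing $S(x)=\sum_{0<n\le x}f(n^2)\e(n\theta)$, a matrix version of van der Corput's inequality (Cauchy--Schwarz in the Frobenius inner product, using $\norm{AB}_2\le\norm{A}_2\norm{B}_2$) gives, for a parameter $R$,
\begin{align*}
	\norm{S(x)}_2^2 \ll \frac{x}{R}\sum_{\abs{r}<R}\abs{\sum_{n}\tr\left(f((n+r)^2)^H f(n^2)\right)} + \frac{x^2}{R} \,.
\end{align*}
Because $(n+r)^2=n^2+2rn+r^2$, the two arguments of $f$ now differ by a quantity that is affine in $n$; combined with the Carry property this lets one reduce, after a further differencing to dispose of the remaining quadratic term in the digit positions that survive, to sums in which the low block of digits of the argument is periodic in $n$ --- evaluated by orthogonality of additive characters modulo a power of $k$ --- while the contribution of a genuine block of ``middle'' digits is exactly of the form
\begin{align*}
	k^{-\lambda}\sum_{u<k^{\lambda}} f(uk^{\alpha})\,\e(-ut) \,,
\end{align*}
and is therefore bounded by $k^{-\gamma(\lambda)}$ via the Fourier property $f\in F_{\gamma,c}$. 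The hypothesis $c\ge 18$ is what guarantees that the Fourier property is applicable at the digit scales $\alpha/\lambda$ actually produced, each differencing roughly doubling the ratio that must be tolerated. Throughout I would keep the Frobenius norm, so that non-commutativity only affects constants and dimension factors, never the structure of the argument.

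Finally, the dependence on $\theta$ is dealt with by Dirichlet's approximation: write $\theta=a/q+\beta$ with $(a,q)=1$, $q\le Q$ and $\abs{\beta}\le 1/(qQ)$. On the minor-arc part the differencing together with the Fourier input already wins, whereas the purely arithmetic sums that remain amount to counting solutions of quadratic congruences $x^2\equiv b\pmod q$, which by the Chinese Remainder Theorem contributes a factor $\ll k^{\omega(q)}$, hence the $(\log x)^{\omega(q)}$; the extra $(\log x)^2$ comes from the dyadic decomposition of the range of $n$ and from summation by parts when removing the weight $\e(n\theta)$. I expect the main obstacle to be bookkeeping of precisely this kind: propagating the Frobenius-norm estimates and the weaker value of $\eta$ through the (one or two) differencing steps of~\cite{Mauduit2018} while keeping every intermediate loss polynomial in $\log x$ and uniform in $\theta$, and in particular checking that $c\ge 18$ really suffices at every digit scale that occurs.
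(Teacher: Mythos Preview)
Your overall plan is right and matches the paper: follow~\cite{Mauduit2018} verbatim, replacing complex conjugation by the Hermitian transpose, using the trace (and its cyclicity) to manipulate the non-commutative products that arise after the two van der Corput steps, and tracking the extra factor $\eta$ through the analogues of their Lemmas~7, 8 and~9 so that at the end one chooses $\rho'=\lfloor \eta\,\gamma(2\rho)/7\rfloor$. The paper's proof is literally a list of the handful of formulas in~\cite{Mauduit2018} that change.

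There is, however, one genuine misconception in your write-up. The quantity $q$ in $(\log x)^{\omega(q)+2}$ is not a Dirichlet denominator for $\theta$: in the Mauduit--Rivat papers $q$ denotes the \emph{base} (what we call $k$ here), so $\omega(q)=\omega(k)$ is a fixed constant depending only on the automaton, and the bound is completely uniform in $\theta$. There is no circle-method splitting into major and minor arcs and no appeal to quadratic congruences modulo a Dirichlet denominator; the twist $\e(n\theta)$ is absorbed harmlessly by the two van der Corput differencings (after which the phase becomes linear in $n$ and is handled by geometric-series bounds). Correspondingly, the $(\log x)^{\omega(k)+2}$ arises inside the Mauduit--Rivat machinery (divisor-type sums and the Fourier-side Lemma~9), not from a dyadic decomposition or partial summation against $\e(n\theta)$.

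A smaller point of ordering: in~\cite{Mauduit2018} (and in the paper) one does \emph{not} first truncate $f(n^2)$ via the Carry property and then difference. Rather, one applies van der Corput twice --- producing quadruple products $f_{\nu_2}((n{+}sk^{\nu_1})^2)\,f_{\nu_2}(n^2)^H\,f_{\nu_2}((n{+}r)^2)\,f_{\nu_2}((n{+}r{+}sk^{\nu_1})^2)^H$ inside a trace --- and the Carry property is invoked on these \emph{differences} (their Lemmas~7--8), which is why the exceptional set has size $k^{\nu-\eta\rho/2}$ rather than $k^{\nu-\eta\rho}$. Your description would still go through, but the bookkeeping is cleaner if you mirror their order.
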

	The proof works exactly the same as in~\cite{Mauduit2018}, one only needs to account for the newly introduced constant $\eta>0$ and needs to be more careful as matrices do not commute. This part should be read alongside with~\cite{Mauduit2018} and we only comment on the necessary changes briefly.

	Lemma 1 of~\cite{Mauduit2018} can be easily adapted to matrix valued sequences:
	\begin{lemma}\label{le_vdc}
		For all $z_1,\ldots,z_N$ being complex $d\times d$ matrices and all integers $k \geq 1$ and $R \geq 1$, we have
		\begin{align*}
			\norm{\sum_{1\leq n \leq N} z_n}^{2}_{F} &\leq \frac{N + kR - k}{R} \rb{\sum_{1\leq n \leq N} \norm{z_n}^{2}_{F} + 2 \sum_{1\leq r < R}\rb{1-\frac{r}{R}} \sum_{1 \leq n \leq N - kr} \tr(z_{n+kr} z_n^{H})}\\
				&= \frac{N + kR - k}{R} \rb{\sum_{1\leq n \leq N} \norm{z_n}^{2}_{F} + 2 \sum_{1\leq r < R}\rb{1-\frac{r}{R}} \sum_{1 \leq n \leq N - kr} \tr(z_{n} z_{n+kr}^{H})}
		\end{align*}
	\end{lemma}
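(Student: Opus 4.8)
The statement is the van der Corput inequality carried over from $\C$ to the Hilbert space $H := (\C^{d\times d}, \langle\cdot,\cdot\rangle)$ equipped with $\langle A,B\rangle := \tr(AB^{H})$, whose associated norm is precisely $\norm{\cdot}_{F}$. The plan is to reproduce the classical one-line proof of the scalar inequality inside $H$; the only feature that is new compared with the scalar case is that $\langle\cdot,\cdot\rangle$ is Hermitian but not symmetric, so an off-diagonal term and its adjoint both survive — and this is exactly what is recorded by the two (equivalent) right-hand sides displayed in the statement.

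Concretely, I would first set $z_{n} := 0$ for $n\leq 0$ and $n>N$ and introduce
\begin{align*}
  u_{m} := \sum_{j=0}^{R-1} z_{m-kj}\qquad(m\in\Z).
\end{align*}
Interchanging the order of summation gives $\sum_{m\in\Z} u_{m} = R\sum_{n=1}^{N} z_{n}$, while $u_{m}$ can be non-zero only when $1\leq m-kj\leq N$ for some $0\leq j<R$, i.e.\ only for the $N+kR-k$ integers $m$ with $1\leq m\leq N+k(R-1)$. Cauchy--Schwarz in $H$, applied to this sum of at most $N+kR-k$ vectors, yields
\begin{align*}
  R^{2}\,\norm{\sum_{n=1}^{N} z_{n}}_{F}^{2} = \norm{\sum_{m} u_{m}}_{F}^{2} \leq (N+kR-k)\sum_{m}\norm{u_{m}}_{F}^{2}.
\end{align*}
It then remains to expand $\norm{u_{m}}_{F}^{2} = \sum_{0\leq j,j'<R}\langle z_{m-kj}, z_{m-kj'}\rangle$ and sum over $m$. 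The diagonal part $j=j'$ contributes $R\sum_{n=1}^{N}\norm{z_{n}}_{F}^{2}$; for $j\neq j'$, writing $j'=j+r$ (resp.\ $j=j'+r$) with $1\leq r<R$ there are $R-r$ admissible values of $j$, and shifting the inner index turns $\sum_{m}\langle z_{m-kj}, z_{m-kj'}\rangle$ into $\sum_{n}\tr(z_{n+kr}z_{n}^{H})$ (resp.\ $\sum_{n}\tr(z_{n}z_{n+kr}^{H})$). Collecting everything,
\begin{align*}
  \sum_{m}\norm{u_{m}}_{F}^{2} = R\left(\sum_{n=1}^{N}\norm{z_{n}}_{F}^{2} + \sum_{r=1}^{R-1}\left(1-\frac{r}{R}\right)\sum_{n=1}^{N-kr}\left(\tr(z_{n+kr}z_{n}^{H})+\tr(z_{n}z_{n+kr}^{H})\right)\right),
\end{align*}
and dividing by $R$ and inserting this into the Cauchy--Schwarz bound gives the inequality.

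Finally, since $\tr(z_{n}z_{n+kr}^{H}) = \overline{\tr(z_{n+kr}z_{n}^{H})}$ one has $\tr(z_{n+kr}z_{n}^{H})+\tr(z_{n}z_{n+kr}^{H}) = 2\operatorname{Re}\tr(z_{n+kr}z_{n}^{H})$; as the left-hand side $\norm{\sum z_{n}}_{F}^{2}$ is real, one may pass to real parts in the bound, after which the symmetric cross term may be written as $2\tr(z_{n+kr}z_{n}^{H})$ or, equally, as $2\tr(z_{n}z_{n+kr}^{H})$ — the two forms in the statement. I do not expect any genuine obstacle here: this is the textbook van der Corput inequality, and the only points to watch are the size of the support window ($N+kR-k$ terms) and the bookkeeping of adjoints forced by the non-commutativity of $\C^{d\times d}$, since no commutation of the $z_{n}$ is ever used.
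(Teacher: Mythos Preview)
Your proof is correct and is precisely the adaptation the paper has in mind: the authors do not spell out a proof but simply say ``Lemma 1 of~\cite{Mauduit2018} can be easily adapted to matrix valued sequences,'' and your averaging-plus-Cauchy--Schwarz argument in the Hilbert space $(\C^{d\times d},\tr(AB^{H}))$ is exactly that adaptation. One small remark on the final paragraph: after taking real parts the cross term is genuinely $2\Re\tr(z_{n+kr}z_n^{H})$, not literally $2\tr(z_{n+kr}z_n^{H})$; the two displayed forms in the paper are complex conjugates of one another, so the stated equality and inequality should be read with an implicit real part (which is harmless in every application, since the cross terms are immediately bounded in absolute value).
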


	The next few lemmata can stay completely unchanged. The next one that needs to be changed is Lemma 7:
	\begin{lemma}
		Let $f: \N \to \U_d$ satisfying Definition~\ref{def:1}, and let $(\nu, \kappa, \rho) \in \N^3$ with $3\rho <\nu < \kappa < \nu + 2 \rho$. The set $\mathcal{E}$ of $n \in \{k^{\nu-1},\ldots,k^{\nu}-1\}$ such that there exists $\ell \in \{0,\ldots,k^{\kappa}-1\}$ with $f(n^2+k)^{H} f(n^2) \neq f_{\kappa + \rho}(n^2 + k)^{H} f_{\kappa + \rho}(n^2)$ satisfies
		\begin{align*}
			\card \mathcal{E} \ll \card_{f,k} k^{\nu-\frac{\eta \rho}{2}}.
		\end{align*}
	\end{lemma}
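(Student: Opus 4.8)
The plan is to deduce the estimate directly from the Carry property of $f$ (Definition~\ref{def:1}), the only genuine work being the bookkeeping of parameters and the control of the (non-injective) relation between $n$ and the leading digits of $n^2$. First I would dispose of small $\rho$: when $\rho$ is bounded the bound is trivial, since $\card\mathcal E\le k^{\nu}-k^{\nu-1}$ while the right-hand side is $\gg_{f,k}k^{\nu}$ (the constant $\eta$ depending only on $f$), so we may assume $\rho$ large. I would then set $\rho_1:=\ceil{\rho/2}$, $\rho_2:=\floor{\rho/2}$ and $\alpha:=\kappa+\rho_1$, so that $\alpha+\rho_2=\kappa+\rho$; the hypotheses $3\rho<\nu<\kappa<\nu+2\rho$ are exactly what makes $\alpha>\nu$, $\rho_2<\lambda:=2\nu-\alpha<\nu$, and $2\nu-2-\alpha\ge 1$ all hold simultaneously.

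Next, for $n\in\mathcal E$ with $k^{\nu-1}\le n<k^{\nu}$, let $m$ denote the shift appearing in the inequality defining membership in $\mathcal E$ (in any case $0\le m<k^{\kappa+1}\le k^{\alpha}$), and write $\ell:=\floor{n^2/k^{\alpha}}$, $n_1:=n^2-\ell k^{\alpha}$, $n_2:=m$, so that $0\le n_1,n_2<k^{\alpha}$, $n^2=\ell k^{\alpha}+n_1$ and $n^2+m=\ell k^{\alpha}+n_1+n_2$. Because $\alpha+\rho_2=\kappa+\rho$, the inequality $f(n^2+m)^{H}f(n^2)\neq f_{\kappa+\rho}(n^2+m)^{H}f_{\kappa+\rho}(n^2)$ becomes exactly the carry violation~\eqref{eq:carry_violation} for the triple $(\lambda,\alpha,\rho_2)$ and the pair $(n_1,n_2)$. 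Hence $\ell$ lies in the exceptional set of Definition~\ref{def:1} (note $\ell<k^{2\nu-\alpha}=k^{\lambda}$ and $\rho_2<\lambda$), which by that property has at most $O_{f,k}(k^{\lambda-\eta\rho_2})$ elements.

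The last step is to pass from the number of admissible $\ell$ to the number of $n$. For fixed $\ell$ the integers $n$ with $\floor{n^2/k^{\alpha}}=\ell$ fill the interval $[\sqrt{\ell k^{\alpha}},\sqrt{(\ell+1)k^{\alpha}})$, hence number at most $1+\tfrac12 k^{\alpha/2}\ell^{-1/2}$; since such $n\ge k^{\nu-1}$ forces $\ell\ge\tfrac12 k^{2\nu-2-\alpha}$, each fibre has only $O(k^{\alpha-\nu+1})$ elements. Multiplying, I would obtain
\[
\card\mathcal E\ \ll_{f,k}\ k^{\lambda-\eta\rho_2}\cdot k^{\alpha-\nu+1}\ =\ k^{\nu+1-\eta\rho_2}\ \ll_{f,k}\ k^{\nu-\eta\rho/2},
\]
using $\rho_2\ge\rho/2-1$ and the boundedness of $\eta$, as claimed. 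I expect this last reduction to be the only delicate point: because $n\mapsto\floor{n^2/k^{\alpha}}$ is very far from injective one must bound its fibres, and the choice $\alpha=\kappa+\ceil{\rho/2}$ together with the constraints on $(\nu,\kappa,\rho)$ is dictated precisely by the need that, after inflating the Carry-property bound $k^{\lambda-\eta\rho_2}$ by the fibre size $O(k^{\alpha-\nu})$, the exponent still falls below $\nu-\eta\rho/2$, up to a constant depending only on $f$ and $k$.
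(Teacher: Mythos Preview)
Your proof is correct and follows precisely the approach of Lemma~7 in Mauduit--Rivat~\cite{Mauduit2018}, to which the paper simply defers while noting that the only change is replacing the exponent $\rho$ by $\eta\rho$ in the Carry-property bound; your splitting $\rho=\rho_1+\rho_2$ with $\alpha=\kappa+\rho_1$, the reduction to~\eqref{eq:carry_violation} with parameters $(\lambda,\alpha,\rho_2)=(2\nu-\alpha,\alpha,\lfloor\rho/2\rfloor)$, and the fibre-counting for $n\mapsto\lfloor n^2/k^{\alpha}\rfloor$ are exactly the ingredients of that argument. The only cosmetic remarks are that $m<k^{\kappa}\le k^{\alpha}$ already suffices (no need for $k^{\kappa+1}$), and that your verification $\lambda\ge\lfloor\rho/2\rfloor+2>\rho_2$ could be stated slightly more explicitly, but neither affects the validity of the proof.
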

	The proof stays essentially unchanged, but it will be important later that one takes the hermitian of $f(n^2+\ell)$ instead of $f(n^2)$.
	
	For Lemma 8 we only need to change the final conclusion to
	\begin{align*}
		\card \mathcal{E} \ll_{f,k} k^{\nu - \eta(\nu_1-\nu_0)} + k^{\frac{\nu_2}{2} + \nu_2 - \nu_0} \log(k^{\nu_2}) k^{-\eta(\nu_1 - \nu_0)}.
	\end{align*}

	We are now ready to tackle the estimate of
	\begin{align*}
		S_0 := \sum_{N/2 < n \leq N} f(n^2) \e(\theta n).
	\end{align*}
	We apply Lemma~\ref{le_vdc} to cut off high digits, just as in ~\cite{Mauduit2018}. We only need to replace,
	\begin{align*}
		S_1(r) = \sum_{n \in I_1(N,r)} f((n+r)^2)^{H} f(n^2) \e(\theta r).
	\end{align*}
	Next we apply Lemma~\ref{le_vdc} again to cut off low digits. The resulting sum is
	\begin{align*}
		S_2'(r,s) = \sum_{n \in I_2(N,r,s)} f_{\nu_2}((n+r+sk^{\nu_1})^2)^{H} f_{\nu_2}((n + sk^{\nu_1})^2) f_{\nu_2}(n^2)^{H} f_{\nu_2}((n+r)^2).
	\end{align*}
	However, as we are only interested in the trace of $S_2'(r,s)$, we are able to replace it with
	\begin{align*}
		\sum_{n \in I_2(N,r,s)} f_{\nu_2}((n + sk^{\nu_1})^2) f_{\nu_2}(n^2)^{H} f_{\nu_2}((n+r)^2) f_{\nu_2}((n+r+sk^{\nu_1})^2)^{H}.
	\end{align*}
	After having this explicit order for the terms, we can use the definition of the double truncated version, to rewrite it as
	\begin{align*}
		\sum_{n \in I_2(N,r,s)} f_{\nu_1,\nu_2}((n + sk^{\nu_1})^2) f_{\nu_1,\nu_2}(n^2)^{H} f_{\nu_1,\nu_2}((n+r)^2) f_{\nu_1,\nu_2}((n+r+sk^{\nu_1})^2)^{H}.
	\end{align*}
	Thereafter, most things stay unchanged, one only needs to be careful to not change the order of the terms, e.g. the estimate for $S_4'(r,s)$ becomes
	\begin{align*}
		S_4'(r,s) \leq k^{2(\nu_2-\nu_0)} &\sum_{\abs{h_1} \leq H} \abs{a_{h_1}(k^{\nu_0-\nu_2},H)}^{2}\\
			& \sum_{0\leq h_2 < k^{\nu_2-\nu_0}} \sum_{0 \leq h_3 < k^{\nu_2-\nu_0}} \abs{\hat{g}(-h_2) \hat{g}(-h_2-h_1) \hat{g}(h_3-h_1) \hat{g}(h_3)}\\
			&\abs{\sum_{n \in I_2(N,r,s)} \e\rb{\frac{2 h_1 r + 2 (h_2 + h_3) s k^{\nu_1}}{k^{\nu_2}} \cdot n}}.
	\end{align*}
	However, we have now set up the order of the terms, such that everything works out just as in~\cite{Mauduit2018}.
	
	The next more substantial change has to be made to Lemma 9, where the final estimate needs to be replaced by
	\begin{align*}
		\sum_{0\leq h < k^{\nu_2 - \nu_0}} \sum_{0\leq \ell < k^{\nu_2 - \nu_0 - \lambda}} \norm{\hat{g}(h + \ell) \hat{g}(h)}_2^2 \ll_{d,f,k} k^{\eta \frac{\nu_1 - \nu_0 - \gamma(\lambda)}{2}} (\log(k^{\nu_2 - \nu_1}))^2,
	\end{align*}
	i.e. we needed to take $\eta$ into account, which was equal $1$ in~\cite{Mauduit2018}.
	This leads to the estimate
	\begin{align*}
		\frac{1}{R} \sum_{1\leq r < R} S_8(r) \ll k^{\nu + \eta \frac{\nu_1 - \nu_0 - \gamma(\nu_2 - \nu_0 - 2 \rho)}{2}} (\log k^{\nu_2 - \nu_1})^2 + \rho k^{\nu - \rho} \log k.
	\end{align*}
	The rest of the proof stays unchanged and one only needs to choose the values for $\rho'$ differently, i.e.
	\begin{align*}
		\rho' = \floor{\eta \frac{\gamma(2 \rho)}{7}}.
	\end{align*}
	This finishes the proof of Theorem~\ref{th_squares_general}.

	\subsection{Finishing the proof of Theorem~\ref{th_squares} and Theorem~\ref{th_squares_log}}
	We have already seen that $n^2$ distributes regularly within residue classes and that $c_{n^2}(m; h)$ satisfies~\eqref{eq_not_sparse}.
	It remains to apply Theorem~\ref{th_squares_general} to the function $f(n) = D(T(n))$ for unitary and irreducible representations $D$ different from $D_j$. Again the factor $\e(n \theta)$ can be used to detect the residue of $n^2$ modulo $k^{\lambda}$.
	Thus, we can apply Theorem~\ref{th_density_primitive} to the subsequence along squares, which gives immediately Theorem~\ref{th_squares}.
	
	The main part of Theorem~\ref{th_squares_log} is now an immediate consequence of Theorem~\ref{Th1}. 
	It will just remain to prove that when the input base $k$ is prime, then the densities are computable rational numbers.

\section{Computability of densities along subsequences}\label{sec:computability}

We first start with the primitive and prolongable case.
\subsection{Densities of primitive automatic sequences}

We use this section to recall a classical results about densities of pure, primitive and prolongable $k$-automatic sequences.
Therefore, we need the following definition.
\begin{definition}\label{def_incidence}
	Let $A = (Q, \{0,\ldots,k-1\}, \delta, q_0)$ be a DFA, where $Q = \{q_0,q_1, \ldots,q_d\}$.
	We define the \emph{incidence matrix $M = M(A)$} as follows:
	\begin{align*}
		M = (m_{i,j})_{0\leq i,j \leq d},
	\end{align*}
	where $m_{i,j} = \abs{\{0\leq w < k: \delta(q_j, w) = q_i\}}$.
\end{definition}
One sees directly that $\sum_{0\leq i \leq d} m_{i,j} = k$ for all $0\leq j \leq d$. Thus one has that $(1,1, \ldots,1)$ is a left-eigenvector assoziated with the eigen-value $k$. It turns out that the right-eigenvector associated to the eigenvalue $k$ describes the densities.

\begin{theorem}[Theorem 8.4.7 and 8.4.5 of~\cite{Allouche2003}]\label{th_primitive_density}
	Let $(a(n))_{n\geq 0}$ be a pure and primitive $k$-automatic sequence with incidence matrix $M$, as in Definition~\ref{def_incidence}. Moreover, let $v = (v_0, \ldots, v_d)^{T}$ be the positive normalized right-eigenvector of $M$ associated with the eigenvalue $k$.
	Then
	\begin{align*}
		d(a(n), q_i) = v_i \in \Q_{> 0},
	\end{align*}
	for all $0\leq i \leq d$.
\end{theorem}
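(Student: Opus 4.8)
The plan is to reduce the statement to the asymptotics of the powers of the incidence matrix $M$ and then invoke the Perron--Frobenius theorem. First I would reformulate the counting function combinatorially. Writing $Q=\{q_0,\dots,q_d\}$ and $e_{q_0}$ for the $q_0$-th coordinate vector in $\R^{d+1}$, set
\[
	c^{(N)}_i := \#\{\bfw\in\Sigma_k^{\,N} : \delta(q_0,\bfw)=q_i\}.
\]
Since in this section the automaton is prolongable, i.e.\ $\delta(q_0,0)=q_0$, leading zeros may be ignored, so $c^{(N)}_i=\#\{n<k^N : a(n)=q_i\}$. Splitting a word of length $N$ into its first $N-1$ digits and its last digit gives, directly from Definition~\ref{def_incidence}, the recursion $c^{(N)}=M\,c^{(N-1)}$, hence $c^{(N)}=M^{N}e_{q_0}$. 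Because each column of $M$ sums to $k$, the all-ones row vector $\mathbf{1}$ satisfies $\mathbf{1}M=k\mathbf{1}$, and in particular $\sum_i c^{(N)}_i=k^N$.

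Next I would bring in spectral theory. The automaton being \emph{primitive} says exactly that $M^{\ell}$ has all entries positive for some $\ell$, i.e.\ $M$ is a primitive nonnegative matrix; since all its column sums equal $k$, its spectral radius is $k$. By Perron--Frobenius, $k$ is then a simple eigenvalue strictly dominating all others in modulus, with a strictly positive right eigenvector $v$; normalise $v$ so that $\sum_i v_i=1$. The spectral gap yields $k^{-N}M^{N}\to v\,\mathbf{1}$ (geometrically), and applying this to $e_{q_0}$ gives
\[
	\frac{1}{k^{N}}\,\#\{n<k^{N}:a(n)=q_i\}=\frac{(M^{N}e_{q_0})_i}{k^{N}}\longrightarrow v_i .
\]
So the density exists, and equals $v_i$, along the subsequence $x=k^{N}$.

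The remaining --- and only slightly delicate --- step is to upgrade this to arbitrary $x$. For $k^{N}\le x<k^{N+1}$ I would decompose $\{0,1,\dots,x-1\}$ according to the most significant position at which a number disagrees with (and is smaller than) $x$: each resulting block is the set of integers with a prescribed digit-prefix $\bfw$ followed by an arbitrary suffix of length $N-\abs{\bfw}$, and such a block contributes $\bigl(M^{\,N-\abs{\bfw}}e_{\delta(q_0,\bfw)}\bigr)_i$ to the count of $n$ with $a(n)=q_i$. By the convergence $k^{-t}M^{t}\to v\,\mathbf{1}$, each long block contributes $k^{\,N-\abs{\bfw}}v_i(1+o(1))$, the finitely many short blocks total $o(x)$, and summing over blocks together with $\sum_i v_i=1$ and the base-$k$ identity $\sum_{t}(\text{digit of }x\text{ at }t)\,k^{t}=x$ gives $\#\{n<x:a(n)=q_i\}=v_i\,x\,(1+o(1))$. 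Hence $d(a(n),q_i)=v_i$.

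Finally, $v$ spans the one-dimensional kernel of the integer matrix $M-kI$, so it may be chosen with entries in $\Q$; the normalising factor $\sum_i v_i$ is then rational as well, and positivity is part of Perron--Frobenius, so $v_i\in\Q_{>0}$. I expect the main obstacle to be precisely the interpolation step, i.e.\ checking that the incomplete top block of digits of $x$ carries the same limiting proportions $v_i$ --- which is exactly what the (geometric) convergence $k^{-N}M^{N}\to v\,\mathbf{1}$ coming from primitivity secures. Alternatively, one could run the whole argument through the morphic picture, realising $a$ as a fixed point of a primitive length-$k$ substitution whose matrix is $M$ and quoting the classical formula for letter frequencies of fixed points of primitive substitutions.
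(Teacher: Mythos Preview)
The paper does not actually provide its own proof of this statement: it is quoted verbatim as Theorem~8.4.7 and~8.4.5 from Allouche--Shallit, with no argument given. So there is nothing in the paper to compare your proof against. That said, your outline is correct and is essentially the standard proof one finds in the literature (and in Allouche--Shallit in particular): identify the letter-counting vector with $M^{N}e_{q_0}$, invoke Perron--Frobenius for the primitive nonnegative matrix $M$ to get $k^{-N}M^{N}\to v\,\mathbf{1}$, then pass from $x=k^{N}$ to arbitrary $x$ by the greedy digit decomposition, and finally read off rationality from the fact that $v$ spans the rational kernel of $M-kI$.

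Two small remarks. First, you silently use prolongability ($\delta(q_0,0)=q_0$) to equate $\#\{n<k^{N}:a(n)=q_i\}$ with $(M^{N}e_{q_0})_i$; the theorem as stated in the paper says only ``pure and primitive'', so strictly speaking this is an extra hypothesis --- but it is exactly the standing assumption throughout Section~\ref{sec:computability}, so this is harmless in context. Second, in the interpolation step you should be explicit that the convergence $k^{-t}M^{t}\to v\,\mathbf{1}$ is \emph{uniform in the starting state} (i.e.\ holds with $e_{q_0}$ replaced by any $e_q$), since the prefix $\bfw$ in your block decomposition lands at $\delta(q_0,\bfw)$, not at $q_0$; this is of course immediate from the matrix convergence, but it is the one place a reader might stumble.
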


\begin{example}\label{ex_paper_folding}
	We discuss the paperfolding sequence with respect to Theorem~\ref{th_primitive_density}.
		The transition diagram of the paperfolding sequence is given below.

		\begin{tikzpicture}[->,>=stealth',shorten >=1pt,auto,node distance=2.8cm, semithick, bend angle = 20, cross 		line/.style={preaction={draw=white,-,line width=4pt}}]
    
    		\node[state, initial](A)                    {$a/1$};
    		\node[state]         (B) [right of=A] 			{$b/1$};
    		\node[state]	(C) [below of =A]		{$c/0$};
    		\node[state]	(D) [below of =B]		{$d/0$};
    
    		\path [every node/.style={font=\footnotesize}, pos = 0.5]
    		(A) edge 		  node 		 {1}      (B)
			edge [loop above] 	node		{0} 	(A)
		(B) edge 	[loop right]		node 		{1} 	(B)
    			edge			node		{0}	(C)
    		(C) edge 			node		{0} 	(A)
			edge	[bend right]	node	[below]	{1}	(D)
    		(D) edge [loop right] 	node		{1}	(D)
			edge [bend right]	node		{0}	(C);
	\end{tikzpicture}

	Thus, we find that the transition matrix is given by
	\begin{align*}
		M = \begin{pmatrix}
1 & 0 & 1 & 0 \\
1 & 1 & 0 & 0 \\
0 & 1 & 0 & 1  \\
0 & 0 & 1 & 1 
\end{pmatrix},
	\end{align*}
with the unique normalized eigenvector $(1/4, 1/4, 1/4, 1/4)^T$ assoziated with the eigenvalue $2$ and consequently, both the value $0$ and $1$ have density $1/2$.
\end{example}

\subsection{Primitive automatic sequences along primes}
We recall here how to explicitly compute the densities of primitive automatic sequences along primes.
We only consider the case when $a$ is pure as the general case follows immediately.
Therefore, let $(a(n))_{n\geq 0}$ be a primitive and prolongable $k$-automatic sequence.
Next we consider the (explicitly computable) decomposition in~\eqref{eq_structure_a}, i.e. $a(n) = f(s(n), T(n))$, where $s(n)$ is a pure synchronizing automatic sequence and $T(n)$ takes values in a finite group $G$. 
Then we computed $d = d(a)$\footnote{This is again explicit, as there are only finitely many possibilities to check.}.

Thus, we know by Theorem~\ref{th_primes_mod_m} that
\begin{align*}
	d(a(p_n), \alpha) = \frac{1}{\varphi(m)}\sum_{\substack{r<m, (r,m) = 1}} d(a(mn +r), \alpha),
\end{align*}
where $m = k \cdot d$.
Moreover, the sequence $((a(nm), \ldots, a(nm+m-1)))_{n\geq 0}$ is again a primitive and prolongable $k$-automatic sequence which is usually called the $m$-compression.
It can be for example constructed by starting with the state $(a(0), \ldots, a(m-1))$. Then we define $\delta((q_{i_0}, \ldots, q_{i_{m-1}}), j)$ as the set of $m$ states that we obtain by first writing the word of length $mk$,
\begin{align*}
	\delta(q_{i_0}, 0), \delta(q_{i_0}, 1), \ldots, \delta(q_{i_0}, k-1), \delta(q_{i_1}, 0), \ldots, \delta(q_{i_{m-1}}, k-1)
\end{align*}
and then picking the subword at positions $jm, \ldots jm + m-1$.\footnote{This construction is much more intuitive when working with substitutions instead of automata.}
It just remains to add new states until the automata is closed under this new transition function.
Then $(a(nm +r))_{n\geq 0}$ is the projection of this newly defined automatic sequence onto the $r$-th coordinate.

We finally note that $d(a(mn+r), \alpha) \in \Q$ and, therefore, $d(a(p_n), \alpha) \in \Q$.

\begin{example}
	We continue the discussion of the paper-folding sequence from Example~\ref{ex_paper_folding}.
	We see directly, that the paper-folding sequence is synchronizing. Thus, $T(n) = id$ and $G = \{id\}$ are trivial and $m = k = 2$ as $d=1$.

	Thus, we need to consider the $2$-compression of $a(n)$. The corresponding transition diagram is given below,
	
\begin{tikzpicture}[->,>=stealth',shorten >=1pt,auto,node distance=2.8cm, semithick, bend angle = 20, cross 		line/.style={preaction={draw=white,-,line width=4pt}}]
    
    		\node[state, initial](A)                    {$(a,b)$};
    		\node[state]         (B) [right of=A] 			{$(c,b)$};
    		\node[state]	(C) [below of =A]		{$(a,d)$};
    		\node[state]	(D) [below of =B]		{$(c,d)$};
    
    		\path [every node/.style={font=\footnotesize}, pos = 0.5]
    		(A) edge 		  node 		 {1}      (B)
			edge [loop above] 	node		{0} 	(A)
		(B) edge 	[loop right]		node 		{1} 	(B)
    			edge			node		{0}	(C)
    		(C) edge 			node		{0} 	(A)
			edge	[bend right]	node	[below]	{1}	(D)
    		(D) edge [loop right] 	node		{1}	(D)
			edge [bend right]	node		{0}	(C);
	\end{tikzpicture}

	We note that this is basically the same transition diagram as for the original paper-folding sequence. Thus, the density of every state is again $1/4$.
	However, now we need to consider the projection to the first coordinate which shows that the density of $b$ and $d$ are $1/2$ and the density of $a$ and $c$ are $0$. Thus we conclude that the density of the symbols $0$ and $1$ in the subsequence of the paperfolding sequence along the primes are $1/2$. 
\end{example}

	\subsection{Primitive automatic sequences along squares}
	For the sake of simplicity we only consider the case, where the
base $k$ is prime. The general case is much more technical, but the densities 
can be computed explicitly by Theorem~\ref{th_density_primitive} even if it is not clear whether they will be rational.
	\begin{theorem}\label{th_squares_primitive}
		Let $k$ be a power of a prime number and $a(n)$ a primitive and prolongable $k$-automatic sequence.
		Then the density along squares is rational.
	\end{theorem}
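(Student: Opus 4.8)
The plan is to read the densities $d(a(n^2),\alpha)$ off the explicit formulas of Theorem~\ref{th_density_primitive}. These apply to the subsequence $\ell^2$: hypothesis~\eqref{eq_not_sparse} is Proposition~\ref{prop: squares}, and~\eqref{eq_D_not_D_j} follows from Theorem~\ref{th_squares_general} applied to $f=D\circ T$ together with the $\e(n\theta)$ trick, exactly as in the proof of Theorem~\ref{th_squares}. Writing $a(n)=f(s(n),T(n))$ as in~\eqref{eq_structure_a} one gets
\[
 d(a(n^2),\alpha) = \sum_{q\in Q,\, g\in G} d(s(n^2),q)\, d(T(n^2),g)\, \ind_{[f(q,g)=\alpha]},
 \qquad d(T(n^2),g) = \frac{d}{\abs{G}}\, c(j;d)\quad(g\in G_j),
\]
and since $c(j;d)=\#\{0\le x<d:x^2\equiv j\bmod d\}/d$, the numbers $d(T(n^2),g)$ are already rational. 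So everything reduces to proving $d(s(n^2),q)\in\Q$ for the pure synchronizing part $s$. As in the statement I would take $k=p$ prime (the prime-power case is the same after writing $2t=a\floor{2t/a}+(2t\bmod a)$ and carrying the extra partial digit).

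First I would record a lemma: for any pure synchronizing $k$-automatic sequence $\sigma$ the limit $\lim_{\mu\to\infty}k^{-\mu}\#\{v<k^\mu:\sigma(v)=q\}$ exists and is rational. Indeed this quantity is the $q$-entry of the $q_0$-row of $(P/k)^\mu$, where $P_{q',q}=\#\{0\le d<k:\delta(q',d)=q\}$, so $P/k$ is the transition matrix of the Markov chain feeding a uniform random digit. A synchronizing word forces a single recurrent class, reachable from every state; and reading a synchronizing word of fixed length from a state of any period-phase lands in that class at a fixed phase, so the class is aperiodic. Hence $(P/k)^\mu$ converges to the matrix whose rows all equal the stationary distribution $\pi$, which is rational since it is, up to scaling, the left null vector of the integer matrix $P-kI$. (This is also implicit in~\cite{Deshouillers2015}.) Applied to $s$ itself this yields $\pi_{q''}:=\lim_{\mu\to\infty}k^{-\mu}\#\{v<k^\mu:s(v)=q''\}\in\Q$.

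Next I would compute $d(s(n^2),q)$ by splitting integers according to their $p$-adic valuation. By Theorem~\ref{th_density_primitive} and $c(m;p^\lambda)=p^{-\lambda}\#\{x<p^\lambda:x^2\equiv m\bmod p^\lambda\}$ one has $d(s(n^2),q)=\lim_{\lambda\to\infty}p^{-\lambda}\#\{x<p^\lambda:s(x^2\bmod p^\lambda)=q\}$. For $x=p^tu$ with $p\nmid u$ and $2t<\lambda$ one has $x^2\bmod p^\lambda=p^{2t}(u^2\bmod p^{M})$ with $M:=\lambda-2t$, whose base-$p$ expansion is that of $u^2\bmod p^{M}$ followed by $2t$ zeros, so $s(x^2\bmod p^\lambda)=\delta\rb{s(u^2\bmod p^{M}),\,0^{2t}}$; the $x$ with $2t\ge\lambda$ contribute $o(1)$. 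As $u$ runs over the units mod $p^{M}$ the residue $u^2\bmod p^{M}$ hits each square-of-a-unit equally often, and a unit is a square mod $p^{M}$ iff it lies in one of finitely many residue classes modulo a fixed power $p^{a_0}$ of $p$; conditioning on that class and using $s(c+p^{a_0}v)=\delta(s(v),(c)_p^{a_0})$ together with the lemma gives, for each fixed $t$,
\[
 \lim_{\lambda\to\infty}\ \frac{1}{p^\lambda}\#\{x<p^\lambda:\ v_p(x)=t,\ s(x^2\bmod p^\lambda)=q\}\ =\ p^{-t}R_t(q),\qquad R_t(q)=\sum_{q':\,\delta(q',0^{2t})=q}R(q'),
\]
with $R$ a fixed rational vector built from the $\pi_{q''}$; moreover the left-hand side is $\le(1-1/p)p^{-t}$ uniformly in $\lambda$, so dominated convergence for the series over $t$ gives $d(s(n^2),q)=\sum_{t\ge0}p^{-t}R_t(q)$. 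Finally $t\mapsto\delta(\,\cdot\,,0^{2t})$ is an eventually periodic map (it is $j\mapsto\delta(\,\cdot\,,0^{j})$ at $j=2t$), hence $R_t(q)$ is eventually periodic in $t$; splitting $\sum_{t\ge0}p^{-t}R_t(q)$ into a finite head plus finitely many geometric progressions of ratio $p^{-\pi}$ shows it is a finite sum of rationals. Thus $d(s(n^2),q)\in\Q$, and with the rationality of the $d(T(n^2),g)$ this proves $d(a(n^2),\alpha)\in\Q$.

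The main obstacle is the bookkeeping in the valuation decomposition: keeping the three parameters $t$, the unit part $u$ and $M=\lambda-2t$ consistent, justifying the interchange of $\lim_\lambda$ with the sum over $t$ (via the uniform bound $(1-1/p)p^{-t}$ and domination by $\sum p^{-t}$), and checking that the per-$t$ limit collapses exactly to the rationals $\pi_{q''}$ and depends on $t$ only through the eventually periodic data $\delta(\,\cdot\,,0^{2t})$. The only structural input beyond Theorem~\ref{th_density_primitive}, Proposition~\ref{prop: squares} and elementary $p$-adic arithmetic of squares (Lemma~\ref{le_cor_hensel}, Corollary~\ref{cor_lift}) is the rationality lemma above, whose proof is the short aperiodic-Markov-chain argument.
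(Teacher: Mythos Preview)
Your proposal is correct and follows essentially the same route as the paper: reduce to the synchronizing part via Theorem~\ref{th_density_primitive}, split according to the $p$-adic valuation, use Hensel-type information on which residues are squares of units, and recognize the remaining sum $\sum_{t\ge 0}p^{-t}\ind_{[\delta(q',0^{2t})=q]}$ as eventually periodic and hence rational. The only cosmetic difference is that you count over $x<p^{\lambda}$ directly (via the rewriting $d(s(n^2),q)=\lim_{\lambda}p^{-\lambda}\#\{x<p^{\lambda}:s(x^2\bmod p^{\lambda})=q\}$) while the paper sums over residues $m$ with weights $c(m;p^{\lambda})$; these are transparently equivalent. One small point: the paper handles the prime-power base by invoking \cite[Proposition~3.3]{Byszewski2020} to replace the $p^{\alpha}$-automaton for $s$ by a synchronizing $p$-automaton, which is cleaner than your sketch of ``carrying the extra partial digit'' and avoids any bookkeeping; you may want to use that reduction instead.
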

	\begin{proof}
		
		As $d(T(n^2), g) = \frac{d}{\abs{G}} \cdot c(j;d) \in \Q$ for $g \in G_j$, we see that we only need to consider the synchronizing part, i.e. we need to show that
		\begin{align*}
			\lim_{\lambda \to \infty} \sum_{m < k^{\lambda}} \ind_{[s(m) = q]} c(m; k^{\lambda}) \in \Q.
		\end{align*}
		Since $s(n)$ is a synchronizing $k$-automatic sequence, for $k = p^{\alpha}$, we know by~\cite[Proposition 3.3]{Byszewski2020} that it is also $p$-automatic and synchronizing. Thus, we assume without loss of generality that $k$ is a prime from now on.
		We first consider the case when $k=2$.
		We already know that the limit above exists, so we pass to the subsequence $2\lambda + 1$ to determine it.
		First we note that we can ignore $m = 0$ as $c(0; k^{2\lambda+1}) = k^{-\lambda-1} \to 0$. Then we rewrite $m = m'k^{2\mu+1} + m'_0 k^{2\mu}$ for some $m'_0 \neq 0$ and $0\leq \mu \leq \lambda$. Since $k$ is assumed to be prime, we have by Lemma~\ref{le_cor_hensel} that $$c(m; k^{2\lambda+1}) = \frac{c(m'_0 k^{2\mu}; k^{2 \mu +1})}{k^{2\lambda-2\mu}}.$$
		We can also determine $c(m'_0 k^{2\mu}; k^{2\mu +1})$ quite easily, as $x^2 \equiv m'_0 k^{2\mu} \bmod k^{2\mu+1}$ if and only if $x = x' k^{\mu+1} + x'_0k^{\mu}$ where $(x'_0)^2 \equiv m'_0 \bmod k$. Thus we have that $c(m'_0 k^{2\mu}; k^{2\mu +1}) = 2/k^{\mu+1}$ if $m'_0$ is a quadratic residue modulo $k$ and $0$ otherwise.
		So we are left with
		\begin{align*}
			d(s(n^2), q) &=\lim_{\lambda \to \infty} \sum_{m < k^{2 \lambda+1}} \ind_{[s(m) = q]} c(m; k^{2\lambda+1})\\
				 &= \lim_{\lambda \to \infty} \sum_{0\leq \mu \leq \lambda} \sum_{m'_0 = QR} \sum_{m' < k^{2\lambda-2\mu}} \ind_{[s(m' k^{2 \mu +1} + m'_0 k^{2\mu}) = q]} \frac{2}{k} \frac{1}{k^{\mu}} \frac{1}{k^{2\lambda-2\mu}}.
		\end{align*}
		Now we aim to split the contribution of $m'$ and $m'_0$.
		We let $\delta$ denote the transition function for $s$ and have $s(m' k^{2\mu +1} + m'_0 k^{2\mu}) = \delta(q_0, (m' k^{2 \mu +1} + m'_0 k^{2\mu})_k) = q$ if and only if there exist $q_1, q_2$ such that $\delta(q_0, (m')_k) = q_1, \delta(q_1, (m'_0)_k^{1}) = q_2$ and $\delta(q_2, (0)_k^{2 \mu}) = q$. Moreover, we see that
		\begin{align*}
			\frac{1}{k^{2\lambda - 2\mu}} \sum_{m' < k^{2\lambda-2\mu}} \ind{[\delta(q_0, (m')_k) = q]} = d(s(n), q) + o_{\lambda-\mu \to \infty}(1). 
		\end{align*}
		Thus we have in total
		\begin{align*}
			d(s&(n^2), q)\\
				 &= \lim_{\lambda \to \infty} \sum_{0\leq \mu \leq \lambda} \sum_{m'_0 = QR} \sum_{m' < k^{2\lambda-2\mu}} \ind_{[s(m' k^{2 \mu +1} + m'_0 k^{2\mu}) = q]} \frac{2}{k^{2\lambda-\mu +1}}\\
				&= \lim_{\lambda \to \infty} \sum_{q_1, q_2 \in Q} \sum_{0\leq \mu \leq \lambda} \sum_{m'_0 = QR} \sum_{m' < k^{2\lambda-2\mu}} \ind_{[\delta(q_0, (m')_k) = q_1]} \ind_{[\delta(q_1, (m'_0)_k^1) = q_2]} \ind_{[\delta(q_2, (0)_k^{2\mu}) = q]}\frac{2}{k^{2\lambda-\mu +1}}\\
				&=  \sum_{q_1, q_2 \in Q}\rb{\sum_{m'_0 = QR}  \ind_{[\delta(q_1, (m'_0)_k^1) = q_2]} \frac{2}{k}}\\
					&\qquad \cdot \lim_{\lambda \to \infty} \sum_{0\leq \mu \leq \lambda}  \frac{1}{k^{\mu}} \ind_{[\delta(q_2, (0)_k^{2\mu}) = q]} \cdot  \rb{ \frac{1}{k^{2\lambda-2\mu}}\sum_{m' < k^{2\lambda-2\mu}} \ind_{[\delta(q_0, (m')_k) = q_1]}}\\
				&=  \sum_{q_1, q_2 \in Q}\rb{\sum_{m'_0 = QR}  \ind_{[\delta(q_1, (m'_0)_k^1) = q_2]} \frac{2}{k}}\\
					&\qquad \cdot \lim_{\lambda \to \infty} \sum_{0\leq \mu \leq \lambda}  \frac{1}{k^{\mu}} \ind_{[\delta(q_2, (0)_k^{2\mu}) = q]} \cdot \rb{d(s(n), q_1) + o_{\lambda-\mu \to \infty}(1)}.
		\end{align*}
		We note that the sum of the $o(1)$ terms is negligible as $\sum_{\mu \geq 0} k^{-\mu}$ is absolutely convergent.
		Thus, we have
		\begin{align}\label{eq_density_synchronizing_squares}
			d(s(n^2), q) &=  \sum_{q_1, q_2 \in Q}d(s(n), q_1) \cdot  \sum_{m'_0 = QR}  \ind_{[\delta(q_1, (m'_0)_k^1) = q_2]} \frac{2}{k} \cdot \sum_{\mu \geq 0}  \frac{1}{k^{\mu}} \ind_{[\delta(q_2, (0)_k^{2\mu}) = q]}.
		\end{align}

		As $d(s, q_1) \in \Q$ it only remains to show that
		\begin{align}\label{eq_delta_02}
			\sum_{\mu \geq 0} \frac{1}{k^{\mu}} \ind_{[\delta(q_2, (0)_k^{2\mu}) = q]} \in \Q.
		\end{align}
		However, $\delta(q_2, (0)_k^{2\mu})$ is an eventually periodic sequence (as we iteratively apply $\delta(., (0)_k^2)$).
		Thus~\eqref{eq_delta_02} is a finite sum of geometric series with rational ratio and as such rational.

		The case $k = 2$ works mostly analogously. The only difference is that (due to Lemma~\ref{le_cor_hensel}) we have to write $m = m' k^{2\mu +3} + m'_0 k^{2\mu}$, where $m'_0 = 1$. 
		Hence, we have $c(m; 2^{2\lambda+1}) = c(2^{2\mu}; 2^{2\mu+3})/2^{2\lambda - 2\mu -2} = 1/2^{2 \lambda - \mu - 1}$. We find in total, analogously to~\eqref{eq_density_synchronizing_squares}
		\begin{align}\label{eq_density_squares_2}
			d(s(n^2), q) &=  \sum_{q_1, q_2 \in Q}d(s(n), q_1) \cdot \ind_{[\delta(q_1, (1)_2^3) = q_2]} \frac{1}{2} \cdot \sum_{\mu \geq 0}  \frac{1}{2^{\mu}} \ind_{[\delta(q_2, (0)_2^{2\mu}) = q]},
		\end{align}
		and the proof finishes using the same arguments.
	\end{proof}

\begin{example}
		We discuss again the paperfolding sequence with respect to Theorem~\ref{th_squares_primitive}.
		
		We first discuss the automatic sequence without the projection and call it $s'$\footnote{The output is then on the alphabet $a,b,c,d$.}.
		As the paperfolding sequence (and $s'$) is $2$-automatic, we have to apply~\eqref{eq_density_squares_2}. We also see that $\delta(q, 001) = b$ for all $q\in Q$. Thus, only $q_2 = b$ gives a positive contribution and clearly $\sum_{q_1\in Q} d(s'(n), q_1) = 1$. This gives
		\begin{align*}
			d(s'(n^2), q) = \frac{1}{2} \sum_{\mu \geq 0} \frac{1}{2^{\mu}} \ind{[\delta(b, (0)_2^{2\mu}) = q]}. 
		\end{align*}
		Moreover, we have $\delta(b,00) = a, \delta(a,00) = a$. So that only $a,b$ have a positive density along squares:
		\begin{align*}
			d(s'(n^2), a) &= \frac{1}{2} \sum_{\mu \geq 1} \frac{1}{2^{\mu}} = \frac{1}{2},\\
			d(s'(n^2), b) &= \frac{1}{2} \sum_{\mu = 0} \frac{1}{2^{\mu}} = \frac{1}{2}.
		\end{align*}
		As both $a$ and $b$ are projected to $1$, we find that the density of $1$ in the paperfolding sequence along squares is $1$.
	\end{example}

\subsection{Logarithmic densities of general automatic sequences}

We focus in this section on how to compute the logarithmic density of automatic sequences (in particular of the $M_i$ in Proposition~\ref{prop:decomposition}).
There is for example an explicit formula in~\cite{Allouche2003}, namely Theorem 8.4.8 (and Corollary 8.4.9). 
However, this one is rather hard to use in practical terms.
There is also a (slightly vague) description in a presentation by Bell~\cite{Bell_presentation}.
We can find a very similar (if not identical) description as in~\cite{Bell_presentation}:

We note that for each of the $M_i$ in Proposition~\ref{prop:decomposition} we have that if $m \in M_i$ then also $mk^{\lambda} + r \in M_i$ for all $\lambda \geq 0, 0\leq r < k^{\lambda}$.
Thus, we let $S_i$ denote the set of integers that "generate" $M_i$, i.e.
\begin{align*}
	S_i := \{m \in M_i: \not \exists m_0 \in M_i, \lambda \geq 0, 0\leq r < k^{\lambda} \mbox{ with } m = m_0k^{\lambda} + r\}.
\end{align*}
This allows us to decompose $M_i$ into a disjoint union,
\begin{align*}
	M_i = \bigcup_{m \in S_i} \{mk^{\lambda} + r: \lambda \geq 0, 0\leq r < k^{\lambda}\}.
\end{align*}

A simple computation shows
\begin{align*}
	d_{\log}(\{mk^{\lambda} + r: \lambda \geq 0, 0\leq r < k^{\lambda}\}) &= \lim_{L \to \infty} \frac{1}{\log(m k^{L})} \sum_{0\leq \lambda < L} \sum_{0\leq r < k^{\lambda}} \frac{1}{mk^{\lambda} + r}\\
		&= \lim_{L \to \infty} \frac{1}{\log(m k^{L})} \sum_{0\leq \lambda < L} \log\rb{\frac{(m+1)k^{\lambda}}{mk^{\lambda}}} + O\rb{\frac{1}{mk^{\lambda}}}\\
		&= \lim_{L \to \infty} \frac{1}{L \log(k) + \log(m)} \rb{L \log(1 + 1/m) + O(1)}\\
		&= \frac{\log(1+1/m)}{\log(k)}.
\end{align*}
This gives in total
\begin{align}\label{eq_d_log_M}
	d_{\log}(M_i) = \frac{1}{\log(k)}\sum_{m \in S_i} \log(1+1/m).
\end{align}

There is a conjecture in~\cite{Allouche2003} that says that the logarithmic density is always the fraction of logarithms of rational numbers.
This is trivially true whenever $S_i$ is finite. However, there are also examples where this is not obvious at all (the following example also appeared in~\cite{Bell_presentation}):
\begin{example}
	We consider the following $3$-automatic sequence that is $1$ if the base $3$ expansion starts with $100\ldots001$ and $0$ otherwise. The corresponding automaton is given below.
	
	\begin{tikzpicture}[->,>=stealth',shorten >=1pt,auto,node distance=2.8cm, semithick, bend angle = 20, cross line/.style={preaction={draw=white,-,line width=4pt}}]
    
    \node[state, initial](A)                    {$a/0$};
    \node[state]         (B) [right of=A] 			{$b/0$};
    \node[state]	(C) [above right of =B]		{$c/0$};
    \node[state]	(D) [below right of =B]		{$d/1$};
    
    \path [every node/.style={font=\footnotesize}, pos = 0.5]
    (A) edge 		  node 		 {1}      (B)
	edge [loop above] node		{0} 	(A)
	edge [bend left]	node		{2}	(C)
    (B) edge 			node 		{1} 	(D)
    	edge			node		{2}	(C)
	edge [loop below] node 		{0}	(B)
    (C) edge [loop right] 	node		{0,1,2} (C)
    (D) edge [loop right] 	node		{0,1,2} (D);
	\end{tikzpicture}

We have that $b_1(n) = 0$ and $b_2(n) = 1$ for all $n\geq 0$. Moreover one finds that $S_2 = \{3^{\lambda} +1: \lambda \geq 1\}$.

Thus one has
\begin{align*}
	d_{\log}(M_2) &= \frac{1}{\log(3)} \sum_{\lambda \geq 1} \log\rb{1 + \frac{1}{3^{\lambda} + 1}}\\
		&= \frac{1}{\log(3)} \log\rb{\prod_{\lambda \geq 1} \rb{1 + \frac{1}{3^{\lambda} + 1}}}.
\end{align*}
\end{example}

We end this section with another example that was already discussed in~\cite{Muellner2017} for which the density along primes does not exist.
\begin{example}
We consider the following automaton and the corresponding automatic sequence $(a(n))_{n\in\N}$.

  \begin{tikzpicture}[->,>=stealth',shorten >=1pt,auto,node distance=2.8cm, semithick, bend angle = 15, cross line/.style={preaction={draw=white,-,line width=4pt}}]
    
    \node[state, initial](A)                    {$a$};
    \node[state]         (B) [right of=A] 			{$b$};
    \node[state]         (C) [right of=B] 			{$c$};
    
    \path [every node/.style={font=\footnotesize}, pos = 0.66]
    (A) edge 		  node 		 {1,2}      (B)
	edge [loop above] node [pos=0.5] {0} 	(A)
    (B) edge [bend left]  node 		 {1}	(C)
	edge [loop above]  node 	[pos=0.5]	 {0,2}	(B)
    (C) edge [bend left] node 		 {1} 	(B)
	edge [loop above] node 		[pos=0.5] 	{0,2}	(C);
	\end{tikzpicture}
	
  It follows by the discussion in~\cite{Muellner2017} that $a(n) = b$ holds in exactly two cases:
  \begin{itemize}
   \item $n$ is even and the first digit of $n$ in base $3$ is $2$,
   \item $n$ is odd and the first digit of $n$ in base $3$ is $1$.
  \end{itemize}
  
  One finds easily that the $a(n)$ is equally distributed on $\{b,c\}$, i.e. $d(a(n), b) = d(a(n), c) = 1/2$.
 But as discussed in~\cite{Muellner2017} the density of $b$ and $c$ do not exist along primes.

Now how does this example work in light of Theorem~\ref{Th2} (and Proposition~\ref{prop:decomposition})?
We first find a decomposition as in Proposition~\ref{prop:decomposition}. Therefore, let
\begin{align*}
\begin{split}
	b_1(n) = 
\left\{\begin{array}{cl} b, & \mbox{if } n \text{ is odd},\\c & \mbox{otherwise}, \end{array}\right.
\end{split}
\begin{split}
	b_2(n) = 
\left\{\begin{array}{cl} c, & \mbox{if } n \text{ is odd},\\b & \mbox{otherwise}, \end{array}\right. 
\end{split}
\end{align*}
and $M_i$ ($i = 1,2$) denotes the set of integers for which the first digit in base $3$ is $i$. One finds directly by the discussion above that this choice satisfies Proposition~\ref{prop:decomposition}. As all prime numbers (except $2$) are odd we have directly $d(b_1(p_n), b) = d(b_2(p_n), c) = 1$ and $d(b_1(p_n), c) = d(b_2(p_n), b) = 0$. 
Moreover, we see that $S_1 = \{1\}$ and $S_2 = \{2\}$. Thus we see by~\eqref{eq_d_log_M} that $d_{\log}(M_1) = \log(2)/\log(3)$ and $d_{\log}(M_2) = (\log(3)-\log(2))/\log(3)$.
	This shows with~\eqref{eq_d_log} that
	\begin{align*}
		d_{\log}(a(p_n), b) = \frac{\log(2)}{\log(3)} \cdot 1 + \frac{\log(3)-\log(2)}{\log(3)} \cdot 0 = \frac{\log(2)}{\log(3)}.
	\end{align*}
	\end{example}

\appendix
\section{Implications for dynamical systems}\label{sec:dynamical}

The decomposition of an automatic sequence in primitive and prolongable automatic sequences in Proposition~\ref{prop:decomposition} has an interesting counterpart in the world of dynamics.
We start off with a short introduction to dynamical systems associated with sequences.

There is a long history for considering dynamical systems associated with sequences (see for example~\cite{Queffelec2010}, which is especially concerned with automatic sequences).\footnote{In this context one works with substitutions of constant length instead of automata. However, we will try to avoid introducing different concepts if not strictly necessary.}

We first define the \emph{language} of a sequence $\mathbf{u} = (u(n))_{n\in \N}$ (or $\Z$ instead of $\N$) taking values in a finite alphabet $\mathbb{A}$ as
\begin{align*}
	\mathcal{L}(\mathbf{u}) := \{u(m)\cdots u(n): m\leq n\},
\end{align*}
i.e. the language is the set of all non-empty factors of $\mathbf{u}$.
Then we can associate a compact set with this sequence,
\begin{align*}
	X_{\mathbf{u}} := \{x \in \mathbb{A}^{\Z}: \mathcal{L}(x) \subseteq \mathcal{L}(\mathbf u)\} = \overline{\{(u(n+\ell))_{n\in \N}: \ell \in \N\}}.
\end{align*}
That is the minimal compact set containing $\mathbf{u}$, that is closed under the shift $T$, where
\begin{align*}
	T((x(n))_{n\in \N} := (x(n+1))_{n\in \N}.
\end{align*}
Therefore, $(X_{\mathbf{u}}, T)$ it is a canonical candidate to consider, when one wants to use methods or ideas coming from dynamical systems.

It proved to be useful to consider two-sided sequences ($\Z$) instead of one-sided sequences ($\N$) for the case when $\mathbf{u}$ is an automatic sequence.

We can make $(X_{\mathbf{u}}, T)$ a topological dynamical system by using the metric
\begin{align*}
	d(x, y) = \sum_{n\geq 0} \frac{1}{2^{n+2}} (d_n(x_n, y_n) + d_{-n}(x_{-n}, y_{-n})),
\end{align*}
where $d_n$ denotes the discrete metric on $\mathbb{A}$.

We can also consider a \emph{measure-theoretic dynamical system}, i.e. $(X_{\mathbf{u}}, \mathcal{B}, \mu, T)$ where $(X, \mathcal{B}, \mu)$ is a standard Borel probability space and $T: X_{\mathbf{u}} \to X_{\mathbf{u}}$ is an a.e. bijection which is bimeasurable and measure-preserving.
We call $(X_{\mathbf{u}}, \mathcal{B}, \mu, T)$ \emph{ergodic} if for every $E\in \mathcal{B}$ with $T^{-1}(E) = E$ follows either $\mu(E) = 0$ or $\mu(E) = 1$.

Each homeomorphism $T$ of a compact metric space $X$ determines many (measure-theoretic) dynamical systems
$(X,\mathcal{B}(X),\mu,S)$ with $\mu\in M(X,S)$, where $M(X,T)$ stands
for the set of Borel probability measures on $X$ ($\mathcal{B}(X)$
stands for the $\sigma$-algebra of Borel sets of $X$).
Recall that by the Krylov-Bogolyubov theorem, $M(X,T)\neq\emptyset$,
and moreover, $M(X,T)$ endowed with the weak-$\ast$ topology becomes a
compact metrizable space. The set $M(X,T)$ has a natural structure of
a convex set (in fact, it is a Choquet simplex) and its extremal
points are precisely the ergodic measures.
We say that the topological system $(X,T)$ is {\em uniquely ergodic}
if  it has only one invariant measure (which must be ergodic).
The system $(X,T)$ is called {\em minimal} if it does not contain a
proper subsystem (equivalently, the orbit of each point is dense).
Furthermore, a point $x \in X$ is called an \emph{almost periodic} point if for any neighborhood $U$ of $x$ there exists $N \in \N$ such that 
\begin{align*}
	\{T^{n+i}(x): i = 0, \ldots, N\} \cap U \neq \emptyset,
\end{align*}
for all $n \in \N$.

It is a classical result that if $\mathbf{u}$ is a primitive and prolongable automatic sequence, then $(X_{\mathbf{u}}, T)$ is \emph{strictly ergodic}, that is minimal and uniquely ergodic. Moreover, every point $x \in X_{\mathbf{u}}$ is almost periodic.

\begin{lemma}\label{le_dynamics}
	Assume that $(X,T)$ is a topological dynamical system and let $x$ be an almost periodic point and $y \in X$ for which $d(T^{j_n}x,T^{i_n}y)\to0$ when $n\to\infty$. Then $\overline{\{T^kx:\:k\in\Z\}}\subset \overline{\{T^ky:\:k\in\Z\}}$.
\end{lemma}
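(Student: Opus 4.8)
The plan is to exhibit a single point $z$ lying in \emph{both} orbit closures, and then to exploit the fact that the orbit closure of the almost periodic point $x$ is minimal.

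Write $M:=\overline{\{T^kx:k\in\Z\}}$ and $Y:=\overline{\{T^ky:k\in\Z\}}$. Since $T$ is a homeomorphism, both $M$ and $Y$ are closed and invariant under $T$ and under $T^{-1}$. The first ingredient I would use is the classical characterisation of almost periodic points: a point is almost periodic (in the sense recalled just before the lemma) if and only if its orbit closure is a minimal subsystem; see for instance \cite{Queffelec2010}. In particular $M$ is a minimal system, and $x\in M$.

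Next, by compactness of $X$ I would pass to a subsequence along which $(T^{j_n}x)_n$ converges, say $T^{j_n}x\to z$. Every term $T^{j_n}x$ lies in $M$, and $M$ is closed, so $z\in M$. On the other hand the hypothesis $d(T^{j_n}x,T^{i_n}y)\to 0$ forces $T^{i_n}y\to z$ along the same subsequence, and each $T^{i_n}y$ lies in $Y$, which is closed; hence $z\in Y$ as well. Since $Y$ is closed and invariant under $T$ and $T^{-1}$, it contains the whole orbit closure $\overline{\{T^kz:k\in\Z\}}$. But $z\in M$ and $M$ is minimal, so $\overline{\{T^kz:k\in\Z\}}=M$, and therefore $M\subseteq Y$, which is exactly the claim.

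The only step that is not a pure compactness argument is the equivalence between almost periodicity of $x$ and minimality of its orbit closure. If one prefers to keep the argument self-contained, one can instead prove directly that $x\in\overline{\{T^kz:k\in\Z\}}$: writing $z$ as a limit of (forward) iterates $T^{\ell_m}x$ and using that the return times of $x$ into any fixed neighbourhood form a syndetic set — together with the uniform continuity of the finitely many iterates $T^0,\dots,T^N$ — one finds, for any neighbourhood of $x$, an iterate of $z$ inside it. I expect the write-up to be short; the only point requiring a little care is to take all orbit closures two-sided, so that they are invariant under $T^{-1}$.
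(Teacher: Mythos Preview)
Your proposal is correct and follows essentially the same approach as the paper's proof: pass to a convergent subsequence to produce a common limit point in both orbit closures, then invoke minimality of the orbit closure of the almost periodic point $x$ to conclude $M\subseteq Y$. The paper's write-up is simply a terser version of your first two paragraphs.
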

  \begin{proof}
	By passing to a subsequence $T^{j_{n_s}}x\to x'$ and $T^{i_{n_s}}y\to y'$, where necessarily $x'=y'$. This shows that the intersection of the closures of the two orbits is non-empty, so the claim follows from minimality of the orbit closure of $x$.
\end{proof}
\begin{remark}
	The condition $d(T^{j_n}x,T^{i_n}y)\to0$ when $n\to\infty$ is equivalent to the fact that $x$ and $y$ have arbitrarily long common subwords.
\end{remark}

\begin{corollary}
	With the notation of Proposition~\ref{prop:decomposition}, we have that $X_{b_i} \subset X_a$ for all $i$. Furthermore, we have for all $i,j$, either $X_{b_i} = X_{b_j}$ or $X_{b_i} \cap X_{b_j} = \emptyset$.
\end{corollary}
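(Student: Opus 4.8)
The plan is to derive both assertions from Lemma~\ref{le_dynamics}, the block‑agreement property furnished by Proposition~\ref{prop:decomposition}, and the classical fact that a primitive and prolongable automatic sequence generates a minimal subshift. First fix $i$. Since the logarithmic density of $M_i$ is positive, $M_i\neq\emptyset$, so we may pick some $m\in M_i$. By the definition of $M_i$ we have $a(mk^{\lambda}+r)=b_i(mk^{\lambda}+r)$ for every $\lambda\in\N$ and every $0\leq r<k^{\lambda}$, so for each $\lambda$ the word of length $k^{\lambda}$ read off $a$ starting at position $mk^{\lambda}$ coincides with the corresponding word of $b_i$; in particular $a$ and $b_i$ have arbitrarily long common subwords. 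Work inside the full shift $\mathbb{A}^{\Z}$, where $\mathbb{A}$ is a common finite alphabet, and let $x$ be a (two‑sided) point of $X_{b_i}$ and $y$ a point of $X_a$. Because $b_i$ is primitive and prolongable, $X_{b_i}$ is minimal, hence $x$ is an almost periodic point. By the Remark following Lemma~\ref{le_dynamics}, the common‑subword property produces sequences $(j_n),(i_n)$ with $d(T^{j_n}x,T^{i_n}y)\to 0$, so Lemma~\ref{le_dynamics} gives
\[
X_{b_i}=\overline{\{T^{k}x:k\in\Z\}}\subseteq\overline{\{T^{k}y:k\in\Z\}}=X_a .
\]

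For the second assertion, recall that $X_{b_i}$ and $X_{b_j}$ are both minimal subshifts, again since $b_i$ and $b_j$ are primitive and prolongable. If $X_{b_i}\cap X_{b_j}\neq\emptyset$, then this intersection is a nonempty closed $T$‑invariant subset of $X_{b_i}$, hence equals $X_{b_i}$ by minimality of $X_{b_i}$; thus $X_{b_i}\subseteq X_{b_j}$, and symmetrically $X_{b_j}\subseteq X_{b_i}$, so $X_{b_i}=X_{b_j}$. Otherwise $X_{b_i}\cap X_{b_j}=\emptyset$, which is the claim.

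The only delicate point is the standard one of passing from the one‑sided sequences $a$ and $b_i$ to two‑sided points of the respective subshifts so that Lemma~\ref{le_dynamics} applies; this is handled by the two‑sided convention already adopted in this appendix, and it is harmless here because a single $m\in M_i$ already forces agreement of $a$ and $b_i$ on windows of every length $k^{\lambda}$. I expect no serious obstacle beyond this bookkeeping. As an alternative to invoking Lemma~\ref{le_dynamics} one can argue directly at the level of languages: minimality of $X_{b_i}$ implies that every factor of $b_i$ occurs in every sufficiently long window of $b_i$, and combining this with the block agreement for one fixed $m\in M_i$ shows $\mathcal L(b_i)\subseteq\mathcal L(a)$, whence $X_{b_i}\subseteq X_a$ straight from the definition of the subshift; the disjoint‑or‑equal dichotomy is then unchanged.
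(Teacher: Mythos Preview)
Your proof is correct and follows essentially the same route as the paper: you use Proposition~\ref{prop:decomposition} to get arbitrarily long common subwords between $a$ and $b_i$, invoke Lemma~\ref{le_dynamics} (via the almost periodicity coming from minimality of $X_{b_i}$) to obtain $X_{b_i}\subseteq X_a$, and then deduce the disjoint-or-equal dichotomy from minimality of both $X_{b_i}$ and $X_{b_j}$. One small imprecision: you should take $y$ to be (a two-sided extension of) $a$ itself rather than an arbitrary point of $X_a$, since you need $y$ to actually contain the common blocks and to have orbit closure equal to $X_a$; with that choice your displayed equalities are justified, and your alternative language-based argument at the end sidesteps this issue entirely.
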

\begin{proof}
	It follows from Proposition~\ref{prop:decomposition} that $a$ and $b_i$ coincide on arbitrarily long intervals. Thus the condition of Lemma~\ref{le_dynamics} is fulfilled and the desired result follows. The second result follows easily as both $X_{b_i}, X_{b_j}$ are minimal.
\end{proof}

\begin{proposition}\label{pr_minimal_dynamics}
	Each automatic sequence $a$ yields a subshift $X_a$ which has only finitely many minimal components. They are given by the $X_{b_i}$. 
\end{proposition}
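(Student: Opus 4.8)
The plan is to establish three things: (a) each $X_{b_i}$ is a minimal subsystem of $X_a$; (b) conversely, every minimal subsystem of $X_a$ coincides with one of the $X_{b_i}$; and (c) deduce finiteness from (b) together with the fact that $\mathcal{B}=\{b_1,\dots,b_s\}$ is finite. Point (a) is immediate from the preceding corollary ($X_{b_i}\subseteq X_a$) and the classical fact that, since $b_i$ is primitive and prolongable, $(X_{b_i},T)$ is strictly ergodic, hence minimal; so each $X_{b_i}$ is a minimal closed $T$-invariant subset of $X_a$, i.e.\ a minimal component.

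The heart of the matter is (b). Let $Y\subseteq X_a$ be a minimal subsystem and fix $y\in Y$, viewed as an element of $\mathbb{A}^{\Z}$; by minimality $\mathcal{L}(Y)=\mathcal{L}(y)$, and $\mathcal{L}(y)\subseteq\mathcal{L}(a)$. The key combinatorial claim is: for every $L$ there exist an index $i=i(L)\in\{1,\dots,s\}$ and a word of length at least $L$ belonging to both $\mathcal{L}(y)$ and $\mathcal{L}(b_i)$. To prove it, I would use that $M_0=\N\setminus\bigcup_i M_i$ has upper Banach density $0$ (Proposition~\ref{prop:decomposition}): taking $\varepsilon=1/2$, there is $\ell_0$ such that every interval of integers of length at least $\ell_0$ contains an element of some $M_i$ with $i\geq 1$. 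Now fix $\lambda$ with $k^{\lambda}\geq L$, set $N:=(\ell_0+2)k^{\lambda}$, pick any factor $u$ of $y$ of length $N$, and write $u=a(p)a(p+1)\cdots a(p+N-1)$ (possible since $u\in\mathcal{L}(a)$). The interval $[p,p+N)$ contains at least $\ell_0$ full blocks $[mk^{\lambda},(m+1)k^{\lambda})$, corresponding to at least $\ell_0$ consecutive values of $m$, so one of them has $m\in M_i$ for some $i\geq 1$. By the defining property of $M_i$, $a$ agrees with $b_i$ on $[mk^{\lambda},(m+1)k^{\lambda})$, so the length-$k^{\lambda}$ factor $a(mk^{\lambda})\cdots a((m+1)k^{\lambda}-1)$ of $u$ — which is a factor of $y$ — also lies in $\mathcal{L}(b_i)$, and $k^{\lambda}\geq L$. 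This proves the claim.

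Since only finitely many indices occur, a pigeonhole argument gives a fixed $i^{*}$ for which the claim holds along a sequence $L\to\infty$; as ``having a common factor of length $\geq L$'' is monotone in $L$, it holds for every $L$. Because $b_{i^{*}}$ is primitive, $\mathcal{L}(b_{i^{*}})=\mathcal{L}(X_{b_{i^{*}}})$, and in the minimal subshift $X_{b_{i^{*}}}$ every point realizes the whole language; hence any $x\in X_{b_{i^{*}}}$ shares arbitrarily long common factors with $y$. By the remark following Lemma~\ref{le_dynamics} this is precisely the hypothesis $d(T^{j_n}x,T^{i_n}y)\to 0$, and $x$ is almost periodic (being a point of a minimal, in fact primitive prolongable, system). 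Lemma~\ref{le_dynamics} then gives $X_{b_{i^{*}}}=\overline{\{T^k x:\:k\in\Z\}}\subseteq\overline{\{T^k y:\:k\in\Z\}}=Y$, and minimality of both forces $X_{b_{i^{*}}}=Y$. For (c): by (b) every minimal component of $X_a$ is one of $X_{b_1},\dots,X_{b_s}$, each of which is a minimal component by (a), and by the preceding corollary these are pairwise equal or disjoint; hence the minimal components are exactly the distinct sets among $X_{b_1},\dots,X_{b_s}$, at most $s$ in number.

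I expect the main obstacle to be the combinatorial claim in (b): locating, inside an \emph{arbitrary} long factor of the minimal sequence $y$, a long factor that actually belongs to one of the $b_i$. The two ingredients making it work are the upper-Banach-density-zero of $M_0$ (which forces long runs of ``good'' blocks at every scale) and the exact meaning of $M_i$ (that $a$ literally coincides with $b_i$ on each block $[mk^{\lambda},(m+1)k^{\lambda})$ with $m\in M_i$). A minor but genuine subtlety, handled by the pigeonhole step, is that the common factor produced may a priori lie in a different $b_i$ for different target lengths $L$; everything after that is routine given Lemma~\ref{le_dynamics}.
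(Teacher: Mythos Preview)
Your proposal is correct and follows essentially the same route as the paper: both use the upper Banach density zero of $M_0$ to locate, inside any sufficiently long factor of a point in a minimal component, a full $k^\lambda$-block $[mk^\lambda,(m+1)k^\lambda)$ with $m\in M_i$ for some $i\geq 1$, then invoke pigeonhole and Lemma~\ref{le_dynamics}. Your version is slightly more explicit about the structure (a)--(c) and about how Lemma~\ref{le_dynamics} is applied (with $x\in X_{b_{i^*}}$ and then minimality of $Y$), but the argument is the same.
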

\begin{proof}
	This is in its essence only a reformulation of Proposition 2.2 in~\cite{Byszewski2019}. However, we provide nevertheless a proof as it highlights important ideas for the proof of Proposition~\ref{pr_measure_dynamics}.
	
	First we note that there exists some $\ell \in \N$ such that every consecutive $\ell$ integers contain an integer $n_0 \notin M_0$, as otherwise the upper Banach density of $M_0$ would be $1$.
	Let us now assume that $z$ is an almost periodic point in $X_a$. Fix $K = k^{\lambda} (\ell +2)\geq1$ for some $\lambda$ (we will later let $\lambda \to \infty$) and we find
\begin{align*}
	z(0)z(1)\cdots z(K) = a(L)a(L+1)\cdots a(L+K),
\end{align*}
for some $L \in \N$ as $\mathcal{L}(z) \subset \mathcal{L}(a)$.

We find by our definition of $K$ that $I = [L/k^{\lambda}, (L+K)/k^{\lambda}-1]$ contains at least $\ell$ consecutive integers, so that there exists $n_0 \in I$ with $n_0 \notin M_0$.
Thus, we have $n_0 \in M_i$ for some $i \geq 1$ and by the properties of $M_i$ also that $n_0 k^{\lambda} + r \in M_i$ for all $0\leq r < k^{\lambda}$.

Thus we find that for every $\lambda \in \N$ there exists some $i\geq 1$ such that $z$ and $b_i$ have a common subword of length $k^{\lambda}$. As there are only finitely many $b_i's$ there has to exist some $i_0 \geq 1$ such that $z$ and $b_{i_0}$ have arbitrarily long common subwords and we can apply Lemma~\ref{le_dynamics}.
This shows that $\overline{\{T^k(z):\:k\in\Z\}} = \overline{\{T^k(b_{i_0}):\:k\in\Z\}}$ as both $z$ and $b_{i_0}$ are almost periodic.
\end{proof}

\begin{proposition}\label{pr_measure_dynamics}
	The only ergodic measures in $X_a$ are given by the unique measures determined by $X_{b_i}$ ($i\geq1$). (In other words the ergodic decomposition is in a sense a decomposition into minimal components.) 
\end{proposition}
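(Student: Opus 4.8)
The plan is to show that every ergodic $T$-invariant measure $\mu$ on $X_a$ is the unique invariant measure of one of the minimal components $X_{b_i}$; via the ergodic decomposition this then identifies $M(X_a,T)$ with the simplex spanned by the finitely many $\mu_i$, and in particular yields the assertion. So fix an ergodic $\mu$. By Proposition~\ref{pr_minimal_dynamics} and the corollary preceding it, the minimal subsystems of $X_a$ are exactly the subshifts $X_{b_i}$ ($i\ge 1$), these are pairwise disjoint after identifying those that coincide, and each $X_{b_i}$ is strictly ergodic, hence carries a unique invariant measure $\mu_i$. Each $X_{b_i}$ is closed and $T$-invariant, so ergodicity of $\mu$ gives $\mu(X_{b_i})\in\{0,1\}$, and disjointness forces at most one of these values to equal $1$. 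If $\mu(X_{b_i})=1$ for some $i$, then $\mu$ is an invariant probability measure supported on $X_{b_i}$, so $\mu=\mu_i$ by unique ergodicity and we are done. Everything thus reduces to excluding the case $\mu\bigl(X_a\setminus\bigcup_{i\ge 1}X_{b_i}\bigr)=1$.

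The reason this cannot occur is that, by Proposition~\ref{prop:decomposition}, $M_0$ has upper Banach density $0$, so at every scale $a$ is overwhelmingly assembled from blocks of the $b_i$. I would make this quantitative and then transfer it to $\mu$. Fix a scale $\lambda$, partition $\N$ into the level-$\lambda$ blocks $B_m=[mk^\lambda,(m+1)k^\lambda)$, and let $\mathcal F_\lambda$ be the finite set of length-$k^\lambda$ words that arise as a full block $b_i|_{B_m}$ for some $m\in M_i$ with $i\ge 1$. For $m\in M_i$ one has $a|_{B_m}=b_i|_{B_m}\in\mathcal F_\lambda$, whereas the blocks $B_m$ with $m\in M_0$ cover a set of upper Banach density at most $k^\lambda$ times that of $M_0$, hence $0$. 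Now equip $(X_a,\mu)$ with its canonical $\mu$-almost-everywhere defined $k^\lambda$-adic block structure (the delicate point, addressed below); relative to it, almost every $x$ has a well-defined ambient level-$\lambda$ block, a length-$k^\lambda$ word which is a level-$\lambda$ factor of $a$ at a block-aligned position. The density estimate above then shows that the $\mu$-measure of the set of $x$ whose ambient level-$\lambda$ block lies in $\mathcal F_\lambda$ is at least $1-\varepsilon_\lambda$, where $\varepsilon_\lambda\to 0$ as $\lambda\to\infty$. Finally, if for some fixed $i$ (which exists by pigeonhole) the ambient level-$\lambda$ block of $x$ occurs in $b_i$ for arbitrarily large $\lambda$, then $x$ and $b_i$ share arbitrarily long common factors, hence $x\in X_{b_i}$ by Lemma~\ref{le_dynamics} together with minimality of $X_{b_i}$; letting $\lambda\to\infty$ we get $\mu\bigl(\bigcup_{i\ge1}X_{b_i}\bigr)=1$, contradicting the assumption and completing the proof.

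I expect the genuinely substantial point to be the one just flagged. The density estimate lives on $\N$ with its canonical $k^\lambda$-adic structure, whereas a point of $X_a$ carries no intrinsic phase; a priori $\mu$ could be misaligned with every $k^\lambda$-adic grid, and this must be ruled out by endowing $(X_a,\mu)$ with a canonical, $\mu$-almost-everywhere defined $k$-adic block structure. This is precisely a recognizability statement, which I would obtain from Moss\'e's theorem together with its extension beyond the primitive case --- equivalently, from the classical fact that the subshift of an aperiodic constant-length substitution is an almost one-to-one extension of the $k$-adic odometer (see \cite{Queffelec2010}); since $M_0$ is negligible, the canonical structures carried by the primitive building blocks $b_i$ can be transported to all of $X_a$ for every invariant $\mu$. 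Granting this, the remainder is routine bookkeeping with Propositions~\ref{prop:decomposition} and \ref{pr_minimal_dynamics} and Lemma~\ref{le_dynamics}.
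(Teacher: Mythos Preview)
Your overall strategy --- reduce to showing $\mu\bigl(\bigcup_{i\ge1}X_{b_i}\bigr)=1$ and then conclude by ergodicity plus unique ergodicity of each $X_{b_i}$ --- matches the paper's. The divergence, and the difficulty you yourself flag, is in how you argue that $\mu$ charges the union of minimal components.

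You propose to equip $(X_a,\mu)$ with a canonical $k^\lambda$-adic phase via recognizability and then transfer the $M_0$-density estimate from $\N$ to $\mu$. There is a circularity here: Moss\'e's theorem and the odometer factor statement you cite apply to aperiodic \emph{primitive} substitutions, hence give a well-defined phase on each $X_{b_i}$ separately, but not a priori on points of $X_a\setminus\bigcup_i X_{b_i}$ --- precisely the set you are trying to show is $\mu$-null. Your suggestion to ``transport the canonical structures from the $b_i$ using that $M_0$ is negligible'' presupposes the conclusion. Recognizability beyond the primitive case does exist in the literature, but it is not covered by the reference you give and would be a substantial import.

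The paper avoids this entirely by working with a \emph{generic point} $z$ for $\mu$, and never attempts to put a phase on $X_a$. Since $z\in X_a$, each prefix $z(0)\cdots z(K)$ occurs in $a$ at some position $L\in\N$; the interval $[L,L+K]$ already carries the $k^\lambda$-adic grid of $\N$, and the upper Banach density zero of $M_0$ (which is uniform over all windows) forces all but a vanishing proportion of the aligned $k^\lambda$-blocks in $[L,L+K]$ to coincide with blocks of some $b_i$. Thus the empirical measures along $z$ are approximated by convex combinations of cylinder measures coming from the $b_i$, and genericity yields $\mu\bigl(\bigcup_i X_{b_i}\bigr)=1$. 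The point is that the $k$-adic structure is used on $\N$, where it is free, rather than on $X_a$; your detour through recognizability is unnecessary.
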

\begin{proof}
	Indeed, if $z$ is a generic point for an ergodic measure $\nu$\footnote{A point $x$ is called generic for a measure $\nu$ if $\lim \limits_{n\to \infty} \frac{1}{n} \sum_{i = 0}^{n-1} f(T^i(x)) = \int_X f d\nu$ holds for all $f \in C(X)$, whose existence is guaranteed by the ergodic Theorem.} then similarly to the proof of Proposition~\ref{pr_minimal_dynamics} we let $K = (\ell +2) \cdot k^{\lambda}$, where we let this time both $\ell \to \infty$ and $\lambda \to \infty$.
We find by the same reasoning as before that
\begin{align*}
	z(0)z(1)\ldots z(K) = a(L)a(L+1)\ldots a(L+K),
\end{align*}
for some $L \in \N$ and find that $I = [L/k^{\lambda}, (L+K)/k^{\lambda}-1]$ contains at least $\ell$ consecutive integers. As the upper Banach density of $M_0$ is zero, we know that the proportion of integers in $I$ that do belong to $M_0$ tend to $0$ as $\ell \to \infty$. Thus, we can cover $[L, K]$ by blocks of the $b_i$ of length $k^{\lambda}$ (up to a small proportion). It follows that $\nu$ is supported by the union of supports of the unique measures given by the $b_i$'s. Since $\nu$ has to be positive on some $X_{b_i}$. As there is only one ergodic measure on $X_{b_i}$ it follows that $\nu$ has to coincide with it.
\end{proof}

\begin{remark}
	Proposition~\ref{pr_minimal_dynamics} and Proposition~\ref{pr_measure_dynamics} show that the ergodic decomposition of invariant measures of  dynamical systems associate with automatic sequences actually corresponds to the decomposition of the topological dynamical system into minimal components.
\end{remark}

Lastly, we give a short application of this decomposition.
\begin{corollary}\label{cor:ortho}The subshift $(X_a,S)$ generated by any automatic sequence $a$  is orthogonal to any bounded multiplicative aperiodic function.
\end{corollary}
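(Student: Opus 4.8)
The plan is to deduce the statement from the primitive and prolongable case, using the ergodic decomposition furnished by Proposition~\ref{pr_measure_dynamics}. Recall that $(X_a,S)$ being orthogonal to a bounded multiplicative aperiodic function $f$ means that
\[
\lim_{N\to\infty}\frac1N\sum_{n<N}F(S^{n}x)\,f(n)=0
\]
for every $x\in X_a$ and every $F\in C(X_a)$. First I would treat a primitive and prolongable automatic sequence $b$: here $(X_b,S)$ is strictly ergodic (minimal and uniquely ergodic), and the theorem of the third author and Lema\'nczyk~\cite{Lemanczyk2018} gives that $(X_b,S)$ is orthogonal to $f$ --- at the level of the sequence this is the estimate $\sum_{n<N}b(n)f(n)=o(N)$, and the passage to all points of $X_b$ and all $F\in C(X_b)$ uses the unique ergodicity of the relevant (products of) systems together with the K\'atai--Bourgain--Sarnak--Ziegler orthogonality criterion; crucially the resulting convergence is uniform over $x\in X_b$.

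Next, by Proposition~\ref{pr_measure_dynamics} the ergodic measures of $(X_a,S)$ are exactly the unique invariant measures of the finitely many subsystems $(X_{b_i},S)$, $i\ge1$, and each $b_i$ is primitive and prolongable, so each $(X_{b_i},S)$ is orthogonal to $f$ by the previous paragraph. Now fix $x\in X_a$, $F\in C(X_a)$ with $\norminf{F}\le1$, and $\varepsilon>0$. Proposition~\ref{prop:decomposition} shows that $M_0$ has upper Banach density $0$ while every $m\in M_i$ forces the base-$k^{\lambda}$ block of $a$ above $m$ to coincide with the corresponding block of $b_i$; hence, after choosing the block length $k^{\lambda}$ large enough, any point of $X_a$ is, outside a set of positions of upper density $<\varepsilon$, locally a point of one of the $X_{b_i}$. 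Splitting $\frac1N\sum_{n<N}F(S^{n}x)f(n)$ into the contributions coming from these $b_i$-parts and the contribution of the remaining $M_0$-positions, the former tend to $0$ by the \emph{uniform} orthogonality of the $(X_{b_i},S)$ (uniformity is what absorbs the fact that the $b_i$-windows occur inside $a$ only up to a translation), while the latter is bounded by $\varepsilon$ since $\abs{f}\le1$; letting $\varepsilon\to0$ gives the claim.

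The main obstacle is exactly this last transfer: an arbitrary $x\in X_a$ need not lie in any minimal piece $X_{b_i}$, and its long windows sit at positions that are only translates of where the matching $b_i$-windows appear in $a$, so a pointwise orthogonality statement for the $X_{b_i}$ would not suffice --- one genuinely needs the uniform form coming from strict ergodicity plus the orthogonality criterion, together with the \emph{Banach}-density (not merely density) form of the negligibility of $M_0$ from Proposition~\ref{prop:decomposition}. Equivalently, one may phrase the deduction as an application of a general principle reducing multiplicative orthogonality of a topological dynamical system to that of its ergodic components, which applies cleanly here because there are only finitely many of them.
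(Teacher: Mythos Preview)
Your overall strategy matches the paper's: reduce to the primitive and prolongable pieces via Proposition~\ref{pr_measure_dynamics} and then transfer orthogonality from the minimal components $(X_{b_i},S)$ back to $(X_a,S)$. The paper's proof is considerably shorter than your hands-on version: it observes that any quasi-generic point in $X_a$ yields a measure whose ergodic components are among the finitely many unique measures on the $X_{b_i}$, notes that each $(X_{b_i},S)$ satisfies the \emph{strong MOMO property} by \cite[Lemma~8.1]{Lemanczyk2018}, and then concludes by the general transfer theorem \cite[Theorem~4.1]{Kanigowski2019}. Your closing sentence (``Equivalently, one may phrase the deduction as an application of a general principle\ldots'') is exactly this argument.

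Your direct decomposition of the orbit of an arbitrary $x\in X_a$ into $b_i$-windows plus an $M_0$-remainder is the right picture, but as written it has a soft spot you yourself flag: what you call ``uniform orthogonality'' --- cancellation in sums over \emph{shifted} windows $\sum_{n\in[N_j,N_{j+1})}F(S^n y_j)f(n)$ with varying basepoints $y_j\in X_{b_i}$ --- is not a consequence of strict ergodicity plus ordinary orthogonality; it is precisely the strong MOMO property. So your hands-on route is viable, but to close it you should invoke strong MOMO for the $(X_{b_i},S)$ explicitly (again \cite[Lemma~8.1]{Lemanczyk2018}) rather than appeal to an unspecified uniformity. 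Once you do that, the block argument you sketch --- using the upper Banach density zero of $M_0$ and the fact that every factor of $x\in X_a$ already occurs in $a$ and hence carries the $M_i$-structure --- goes through and is essentially a by-hand unpacking of the general theorem the paper quotes.
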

\begin{proof}
We take any point $y\in X_a$ and suppose that it is quasi-generic for a measure $\nu$. Its ergodic decomposition consists of finitely many measures, each of which yields a system $(X_{b_i}, T)$ which satisfies the strong MOMO property by~\cite[Lemma 8.1]{Lemanczyk2018}. So the result follows from a general theory (see for example~\cite[Theorem 4.1]{Kanigowski2019}).
\end{proof}

\section{Upper density}\label{sec_upper}

This section is devoted to sketching a proof for the following theorem which is a generalization of the corresponding result in~\cite{Bell2020} and follows in large parts the same ideas.

\begin{theorem}\label{th_upper_density}
	Under the same conditions as in Theorem~\ref{Th1} together with the additional assumption $\beta = 1$, the upper and lower densities,
	\begin{equation*}
		\overline{d}(a(n_{\ell}), \alpha) = \limsup_{x\to \infty} \frac{1}{x} \sum_{\ell \leq x} \inda 
		\;\;\;\mbox{ and } \;\;\;
		\underline{d}(a(n_{\ell}), \alpha) = \liminf_{x\to \infty} \frac{1}{x} \sum_{\ell \leq x} \inda \,,
	\end{equation*}
	can be explicitly computed.
	Moreover, if the densities for primitive and prolongable automatic sequences $\tilde{a}(n)$ along the subsequence $(n_{\ell})$ are rational, then so are the upper and lower densities of $a(n)$ along the subsequence $(n_{\ell})$.
\end{theorem}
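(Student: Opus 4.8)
The plan is to mimic the proof of Theorem~\ref{Th1}, but instead of tracking the logarithmic average (which averages out the oscillation coming from the final components), I would track the honest partial sums and keep the contribution of each final component separate, so that the $\limsup$ and $\liminf$ can be computed as an explicit optimization over a finite combinatorial object. The additional assumption $\beta = 1$ (equivalently $\gamma = 1$, i.e.\ $(n_\ell)$ is slowly varying) is what makes this feasible: by Lemma~\ref{le_log_g} and the discussion in Section~\ref{sec_n_l}, the counting function $g(N) = \#\{\ell : n_\ell \le N\}$ is itself slowly varying, so $g(mk^\nu)/g(k^\nu) \to 1$ for every fixed $m$, and the ``weights'' of the dyadic-type blocks $[g(mk^\nu), g((m+1)k^\nu))$ become asymptotically uniform in a controlled way rather than behaving like $m^\beta$. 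This removes the $(m+1)^\beta - m^\beta$ factors that appear in the proof of (ii) of Theorem~\ref{Th1} and replaces them by quantities that only depend on the logarithmic densities $d_{\log}(M_i)$.

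First I would invoke Proposition~\ref{prop:decomposition} to write $a$ in terms of the primitive prolongable sequences $b_1,\dots,b_s$ and the sets $M_1,\dots,M_s$ with $M_0 = \N\setminus\bigcup_i M_i$ of upper Banach density $0$. As in Section~\ref{sec:transfer}, fix a large $\lambda$, write $N$ in the form $k^{\nu+\lambda-1}\le N < k^{\nu+\lambda}$, and decompose
\begin{align*}
	\frac{1}{g(N)}\sum_{\ell\le g(N)} \inda = \frac{1}{g(N)}\sum_{0\le i\le s}\ \sum_{\substack{1\le m<k^\lambda\\ m\in M_i}}\ \sum_{j<\nu}\ \sum_{g(mk^{\lambda j})\le\ell< g((m+1)k^{\lambda j})} \inda + (\text{boundary terms}).
\end{align*}
For $m\in M_i$ the innermost sum agrees with the same sum for $b_i$, and by the existence of the density $d(b_i(n_\ell),\alpha) =: d_i$ together with the slow variation of $g$, each block $[g(mk^{\lambda j}), g((m+1)k^{\lambda j}))$ contributes, after dividing by $g(N)$, a weight that converges (as $\nu,\lambda\to\infty$ in the right order) to a quantity proportional to the logarithmic mass that $M_i$ puts on the scale $mk^{\lambda j}$, times $d_i$. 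The contribution of $M_0$ is negligible because of the upper Banach density $0$ bound, handled exactly as in the proof of (ii). The key new point is that since the limit $\frac1N\sum_{\ell\le g(N)}$ need not exist, the partial sums oscillate, and the oscillation is governed precisely by \emph{which mixture} of the scales (and hence which weighting of the $d_i$) one sees as $N\to\infty$ along a given subsequence. Concretely, writing $w_i(N)$ for the normalized logarithmic mass of $M_i$ below $N$, one gets
\begin{align*}
	\frac{1}{g(N)}\sum_{\ell\le g(N)}\inda = \sum_{1\le i\le s} w_i(N)\, d_i + o(1),
\end{align*}
and $\overline d(a(n_\ell),\alpha) = \limsup_N \sum_i w_i(N)\,d_i$, $\underline d = \liminf_N \sum_i w_i(N) d_i$. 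Since each $\sum_i w_i(N) = 1 + o(1)$ and the $w_i(N)$ range (by varying $N$) over an explicitly describable compact set $W\subseteq\{(w_i): w_i\ge 0,\ \sum w_i = 1\}$ — determined by the finitely many sets $S_i$ generating $M_i$ as in~\eqref{eq_d_log_M} — the $\limsup$/$\liminf$ are the max/min of the linear functional $(w_i)\mapsto \sum_i w_i d_i$ over $W$, hence computable.

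The main obstacle will be pinning down the set $W$ of attainable weight vectors precisely and showing it is computable: one must understand, for the automatic sets $M_i$, which proportions $(w_1(N),\dots,w_s(N))$ of ``recent logarithmic mass'' can simultaneously occur, and prove both that every such vector is achieved along some sequence $N\to\infty$ and that no others are. This is exactly the place where the structure of automatic sequences — via the generating sets $S_i$ and the fact that $m\in M_i$ forces $mk^\lambda + r\in M_i$ — is used to get a finite description; it is analogous to, but somewhat more delicate than, the computation of $d_{\log}(M_i)$ in Section~\ref{sec:computability}. The rationality addendum then follows immediately: if all the $d_i$ are rational and $W$ is a rational polytope (which it is, being cut out by the finitely many rational logarithmic masses coming from the $S_i$), then the optimum of a rational linear functional over a rational polytope is attained at a rational vertex, so $\overline d$ and $\underline d$ are rational. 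I would only sketch these steps, since the block-decomposition estimates are essentially identical to those already carried out in the proofs of parts (i) and (ii) of Theorem~\ref{Th1}.
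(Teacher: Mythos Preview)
Your reduction is morally right but has two slips and one genuine gap. First, two slips: with $\beta=1$ the function $g$ is regularly varying of index~$1$, not slowly varying, so $g(mk^\nu)/g(k^\nu)\to m$ (not $1$); the blocks $[g(mk^\nu),g((m+1)k^\nu))$ then all have length $\sim g(k^\nu)$, which is the uniformity you need, so your conclusion survives. Second, the correct weights are the \emph{ordinary} densities $w_i(N)=\lvert M_i\cap[0,N)\rvert/N$, not ``logarithmic mass'': using Proposition~\ref{pr_evaluate_m} one indeed gets
\[
\frac{1}{g(N)}\sum_{\ell\le g(N)}\inda=\sum_{i=1}^{s}w_i(N)\,d(b_i(n_\ell),\alpha)+o(1),
\]
so the problem does reduce to maximizing a fixed linear functional over the set $W$ of limit points of $(w_1(N),\ldots,w_s(N))$.

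The genuine gap is your claim that $W$ is a rational polytope. You acknowledge this as ``the main obstacle'' but offer no argument, and it is precisely here that all the content lies: it is a vector-valued strengthening of Bell's theorem that the upper density of a single automatic set is rational, and it does not follow from the description of the generating sets $S_i$ (which can be infinite, as in the $3$-automatic example in Section~\ref{sec:computability}). The paper does \emph{not} attempt to describe $W$; instead it follows Bell~\cite{Bell2020} directly. One defines $\mathcal S=\{m:\gamma(m)=\overline d\}$ with $\gamma(m)$ the $\limsup$ of the normalized partial sums up to $mk^\nu+r$, uses pigeonhole on the states of the product automaton from Lemma~\ref{le_density_M_i} to find $m\in\mathcal S$ whose base-$k$ expansion contains a loop $m=m_1k^{\lambda_2}+m_2$ with $\delta(q_0,(m_1)_k)=\delta(q_0,(m)_k)$, and then invokes the technical ratio inequality (Lemma~\ref{le_upper_density}) to show that iterating the loop keeps one in $\mathcal S$. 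Sending the number of iterations to infinity yields an explicit closed formula $\overline d=(\kappa_1'+\kappa_2'/(k^{\lambda_2}-1))/(m_1+m_2/(k^{\lambda_2}-1))$ with $\kappa_1',\kappa_2'$ rational combinations of the $d_{i,q}$ and $d(b_i(n_\ell),\alpha)$. This bypasses the polytope question entirely; your approach would require proving that stronger structural statement from scratch.
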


In particular, we can apply this theorem to the subsequence along primes.
\begin{corollary}\label{co_upper_primes}
	Let $(a(n))_{n\in \N}$ be an automatic sequence.
	Then, the upper and lower densities along the primes are rational and can be explicitly computed.
\end{corollary}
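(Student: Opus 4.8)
The plan is to deduce Corollary~\ref{co_upper_primes} directly from Theorem~\ref{th_upper_density} applied to the subsequence $(n_\ell) = (p_\ell)$ of prime numbers. First I would check that $(p_\ell)_{\ell\ge 1}$ is a regularly varying sequence in the sense of \eqref{eq_gamma}--\eqref{eq_lim_L} with $\gamma = 1$: by the prime number theorem $p_\ell = \ell \log\ell\,(1+o(1))$, so writing $L(\ell) := p_\ell/\ell$ one has $L(\lceil\delta\ell\rceil)/L(\ell)\to 1$ for every $0<\delta<1$, and hence $\beta = 1/\gamma = 1$. This is exactly the extra hypothesis required by Theorem~\ref{th_upper_density} on top of those of Theorem~\ref{Th1}, and it is where sequences such as Piatetski--Shapiro sequences $\floor{n^c}$ with $c>1$ would fall outside the scope.

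Second, I would verify the hypothesis of Theorem~\ref{Th1} for $(p_\ell)$, namely that for every primitive and prolongable automatic sequence $\tilde a(n)$ the densities $d(\tilde a(p_n),\alpha)$ exist. This is precisely what was established in Section~\ref{sec:primes}: the Carry and Fourier properties of $D(T(\cdot))$ combined with Theorem~\ref{th_density_primitive} show that $s$, $T$ and hence $\tilde a$ all possess densities along the primes. Moreover, by the explicit computation recalled in Section~\ref{sec:computability} --- the $m$-compression construction together with Theorem~\ref{th_primitive_density} and Theorem~\ref{th_primes_mod_m}, using that $c_{\mathbb{P}}(r;h) = \mathbf{1}_{[(r,h)=1]}/\varphi(h)$ is rational --- these densities are rational numbers.

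With both hypotheses in place, Theorem~\ref{th_upper_density} applies verbatim and yields that the upper and lower densities $\overline{d}(a(p_n),\alpha)$ and $\underline{d}(a(p_n),\alpha)$ can be explicitly computed; and since the densities of the primitive and prolongable building blocks $b_i$ along the primes are rational, the second part of Theorem~\ref{th_upper_density} gives that $\overline{d}(a(p_n),\alpha)$ and $\underline{d}(a(p_n),\alpha)$ are rational as well.

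I do not expect a genuine obstacle here: the only points requiring care are bookkeeping ones --- confirming $\gamma = 1$ so that $\beta = 1$, and checking that ``distributes regularly within residue classes'' together with the rationality of the relevant constants feeds correctly into the primitive case treated in Section~\ref{sec:primes}. Everything substantive has already been carried out in Section~\ref{sec:primes}, Section~\ref{sec:computability} and the proof of Theorem~\ref{th_upper_density} in Appendix~\ref{sec_upper}.
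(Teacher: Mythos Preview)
Your proposal is correct and follows exactly the approach the paper intends: the paper presents Corollary~\ref{co_upper_primes} as an immediate application of Theorem~\ref{th_upper_density} (introduced by the sentence ``In particular, we can apply this theorem to the subsequence along primes''), and you have filled in precisely the verifications needed---$\gamma=\beta=1$ from the prime number theorem, existence and rationality of the densities for primitive prolongable pieces from Sections~\ref{sec:primes} and~\ref{sec:computability}.
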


We will need the following preliminary results.
\begin{lemma}[Lemma 3.1 in~\cite{Bell2020}]\label{le_upper_density}
	Let $k \geq 2$ be a natural number, let $\gamma$ be a positive real number, let $s_n, s_n'$ be sequences of non-negative numbers, let $u', v', u$ be non-negative real numbers, $v$ a positive real number, and let $b$ and $c$ be positive integers. If 
	\begin{align*}
		\frac{(v'k^{b+c} + u' k^c + s_n')}{(v k^{b+c} + uk^{c} + s_n)} \to \gamma
	\end{align*}
	as $n \to \infty$ and
	\begin{align*}
		\limsup_{n\to\infty} \frac{(v'k^{c} + s_n')}{(v k^{c} + s_n)} \leq \gamma,
	\end{align*}
	then
	\begin{align*}
		\limsup_{n\to\infty} \frac{(v'k^{2b+c} + u' (k^{b+c}+k^c) + s_n')}{(v k^{2b+c} + u(k^{b+c} +k^{c}) + s_n)} \geq \gamma.
	\end{align*}
\end{lemma}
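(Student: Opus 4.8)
The plan is to reduce the whole statement to one exact linear identity and then do bookkeeping with $\limsup$ and $\liminf$. Write $A_n = vk^{b+c}+uk^c+s_n$, $B_n = vk^c+s_n$, $C_n = vk^{2b+c}+u(k^{b+c}+k^c)+s_n$ for the three denominators, and $A_n',B_n',C_n'$ for the corresponding numerators (obtained by priming $v,u,s_n$). Comparing the coefficients of $v$, of $u$ and of the constant term shows that for every $n$
\[
C_n = (k^b+1)A_n - k^b B_n \qquad\text{and}\qquad C_n' = (k^b+1)A_n' - k^b B_n',
\]
and hence, subtracting $\gamma$ times the first equation from the second,
\[
C_n' - \gamma C_n = (k^b+1)\bigl(A_n' - \gamma A_n\bigr) - k^b\bigl(B_n' - \gamma B_n\bigr).
\]

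Next I would record the elementary bounds $0 < A_n \le C_n$ and $0 < B_n \le C_n$, valid because $k\ge 2$ and $b\ge 1$ give $k^{2b+c}\ge k^{b+c}\ge k^c$ and because $v>0$. The first hypothesis says $(A_n'-\gamma A_n)/A_n\to 0$; dividing instead by the larger quantity $C_n$ only shrinks the absolute value, so $(k^b+1)(A_n'-\gamma A_n)/C_n\to 0$. Plugging this into the identity and using that adding a null sequence does not change a $\limsup$, together with $\limsup_n(-k^b z_n) = -k^b\liminf_n z_n$ (as $k^b>0$), gives
\[
\limsup_{n\to\infty}\frac{C_n'-\gamma C_n}{C_n} \;=\; -\,k^b\liminf_{n\to\infty}\frac{B_n'-\gamma B_n}{C_n}.
\]
Thus it suffices to prove $\liminf_{n\to\infty}(B_n'-\gamma B_n)/C_n \le 0$, for then the right-hand side is $\ge 0$, i.e.\ $\limsup_n C_n'/C_n \ge \gamma$, which is the claim.

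To establish $\liminf_{n\to\infty}(B_n'-\gamma B_n)/C_n \le 0$ I would split into two cases on the sign of $B_n'-\gamma B_n$. If $B_n'-\gamma B_n < 0$ for all large $n$, the quotient by $C_n$ is eventually negative and we are done. Otherwise $B_n'-\gamma B_n \ge 0$ along an infinite index set $\mathcal N$; along $\mathcal N$ the numbers $(B_n'-\gamma B_n)/B_n$ are non-negative, while the second hypothesis forces $\limsup_n (B_n'-\gamma B_n)/B_n \le 0$ and hence the $\limsup$ along $\mathcal N$ is also $\le 0$, so these numbers converge to $0$ along $\mathcal N$; then $0\le (B_n'-\gamma B_n)/C_n\le (B_n'-\gamma B_n)/B_n\to 0$ along $\mathcal N$, which again gives $\liminf\le 0$. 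The one point that requires real care — the main obstacle — is exactly the negative coefficient $-k^b$ in front of $B_n'-\gamma B_n$, which rules out a naive sign-chasing argument on a fixed subsequence; the case split above, resting on the elementary fact that a non-negative sequence with non-positive $\limsup$ converges to $0$, is what renders that negative coefficient harmless. Everything else is routine: verifying the coefficient identity and the inclusions $A_n,B_n\le C_n$.
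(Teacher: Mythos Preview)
Your proof is correct. The key linear identity $C_n = (k^b+1)A_n - k^b B_n$ (and its primed version) is readily verified, the inequalities $0<A_n,B_n\le C_n$ follow from $v>0$ and $k^b\ge 2$, and your case split handling the negative coefficient $-k^b$ is sound: in the second case a non-negative sequence with non-positive $\limsup$ does converge to $0$ along the relevant subsequence, and that forces $\liminf_n (B_n'-\gamma B_n)/C_n\le 0$.

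As for comparison: the paper does not give its own proof of this lemma. It is quoted verbatim as Lemma~3.1 of Bell~\cite{Bell2020}, with only the remark that Bell's hypotheses of strict positivity on $s_n,s_n',u,u',v'$ can be relaxed to non-negativity. So there is nothing in the present paper to compare your argument against; your self-contained proof via the exact decomposition $C_n'-\gamma C_n=(k^b+1)(A_n'-\gamma A_n)-k^b(B_n'-\gamma B_n)$ is a clean way to see why the result holds.
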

We note that Lemma~\ref{le_upper_density} was stated in~\cite{Bell2020} for positive sequences $s_n, s_n'$ and positive numbers $u', v', u$, but the proof only requires them to be non-negative.

From now on, we let $(a(n))_{n\in\mathbb N}$ denote a $k$-automatic sequence and we let $M_i$, $i\leq 1\leq s$, be the sets defined as in 
Proposition \ref{prop:decomposition}. We recall that in general the density of the automatic sets $M_i$ do not exist. 
However, the density exists if we only consider certain intervals.

\begin{lemma}\label{le_density_M_i}
	There exists an automaton $\mathcal{A} = (Q, \Sigma_k, \delta, q_0)$ such that for $1 \leq i \leq s$ and $m \in \N$,
	\begin{align}\label{eq_limit_M_i}
		\lim_{\nu \to \infty} \frac{\abs{M_i \cap [mk^{\nu}, (m+1)k^{\nu}-1]}}{k^{\nu}} = d_{i,\delta(q_0, (m)_k)}
	\end{align}
	exists and is rational.
\end{lemma}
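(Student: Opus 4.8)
The plan is to handle each set $M_i$ through a minimal automaton for its indicator sequence, to locate inside it one absorbing state which the word $(m)_k$ reaches exactly when $m\in M_i$, and then to recognise the limit in \eqref{eq_limit_M_i} as an absorption probability of a finite Markov chain — a quantity that is automatically rational.

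First I would fix $i$ and take a \emph{minimal} $k$-DFAO $\mathcal{A}_i=(Q_i,\Sigma_k,\delta_i,q_0^{(i)},\{0,1\},\tau_i)$ producing the automatic sequence $\ind_{M_i}$ (whose $n$-th term is $1$ if $n\in M_i$ and $0$ otherwise; it is automatic by Lemma~\ref{le_M_i_automatic}). The structural fact I would exploit, already used in the proof of Proposition~\ref{prop:decomposition}, is that $m\in M_i$ forces $mk^\lambda+r\in M_i$ for all $\lambda\ge 0$, $0\le r<k^\lambda$, and that $M_i$ is non-empty (it has positive logarithmic density). From this I would deduce that $\mathcal{A}_i$ possesses a unique state $\top_i$ whose entire future output is constant equal to $1$: uniqueness is immediate from minimality (two such states are indistinguishable, hence equal), and existence follows because for any $m\in M_i$ the state $\delta_i(q_0^{(i)},(m)_k)$ has this property, its subtree consisting of integers $mk^\lambda+r\in M_i$. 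One then checks that $\top_i$ is a sink ($\delta_i(\top_i,j)$ again has constant future $1$, hence equals $\top_i$) and that $\delta_i(q_0^{(i)},(m)_k)=\top_i$ \emph{if and only if} $m\in M_i$; the nontrivial direction was just sketched, the converse being $\ind_{M_i}(m)=\tau_i(\top_i)=1$.

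With this in hand, for $m\in\N$ set $q:=\delta_i(q_0^{(i)},(m)_k)$. Since $(mk^\nu+r)_k=(m)_k\,(r)_k^\nu$ and $(r)_k^\nu$ ranges over all of $\Sigma_k^\nu$ as $r$ runs through $\{0,\dots,k^\nu-1\}$, the previous paragraph identifies
\begin{align*}
	\frac{|M_i\cap[mk^\nu,(m+1)k^\nu-1]|}{k^\nu}=\frac{|\{w\in\Sigma_k^\nu:\delta_i(q,w)=\top_i\}|}{k^\nu}=:p_\nu^{(i)}(q).
\end{align*}
Because $\top_i$ is absorbing, every word $w$ counted here extends to $k$ longer words with the same property, so $p_\nu^{(i)}(q)$ is non-decreasing in $\nu$ and bounded by $1$, hence converges to some $p_\infty^{(i)}(q)\in[0,1]$. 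Viewing $\delta_i$ as the random walk on $Q_i$ that applies $\delta_i(\cdot,j)$ for a uniform random $j\in\Sigma_k$, this limit is the probability of ever reaching $\top_i$ from $q$. I would then invoke the standard theory of absorbing finite Markov chains: this absorption probability is the unique solution of the rational linear system $h(\top_i)=1$, $h(q')=0$ whenever $\top_i$ is unreachable from $q'$, and $h(q')=\tfrac1k\sum_{j\in\Sigma_k}h(\delta_i(q',j))$ for the remaining states, the latter being transient so that the system is uniquely solvable; hence $p_\infty^{(i)}(q)\in\Q$ for every $q\in Q_i$.

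It remains to pass to a single automaton valid for all $i$ simultaneously. I would take $\mathcal{A}=(Q,\Sigma_k,\delta,q_0)$ to be the part of the product $\mathcal{A}_1\times\cdots\times\mathcal{A}_s$ (in the sense of Definition~\ref{def_product_automata}, with outputs forgotten) reachable from $q_0=(q_0^{(1)},\dots,q_0^{(s)})$, and I would define $d_{i,q}:=p_\infty^{(i)}(q^{(i)})\in\Q$ for $q=(q^{(1)},\dots,q^{(s)})\in Q$. Since the coordinate projections commute with the transition functions, $\pi_i(\delta(q_0,(m)_k))=\delta_i(q_0^{(i)},(m)_k)$, and the computation above yields exactly \eqref{eq_limit_M_i}. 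The main obstacle is the second step — pinning down $\top_i$ and proving the equivalence $m\in M_i\iff\delta_i(q_0^{(i)},(m)_k)=\top_i$ — which is precisely where minimality of $\mathcal{A}_i$ together with the prefix-closure property of $M_i$ must be combined carefully; everything afterwards is the monotonicity observation plus textbook facts about absorbing Markov chains.
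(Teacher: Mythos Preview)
Your argument is correct, and it takes a genuinely different route from the paper. The paper's proof also starts with the monotonicity observation to get existence, and also passes to the product automaton $\mathcal A=\mathcal A^{(1)}\times\cdots\times\mathcal A^{(s)}$; but from there it expresses the ratio in \eqref{eq_limit_M_i} via the incidence matrix as $v_{\tau_i}\cdot(A/k)^\nu\cdot e_{\delta(q_0,(m)_k)}$ and then invokes Proposition~2.1 of \cite{Bell2020} to conclude that limits of such expressions along subsequences are rational (and hence the full limit, already known to exist, is rational). By contrast you exploit the specific prefix-closure property of $M_i$ to locate a unique absorbing state $\top_i$ in the \emph{minimal} automaton for $\ind_{M_i}$, reduce the limit to the probability that the uniform random walk on $Q_i$ is eventually absorbed in $\top_i$, and read off rationality from the standard linear system for hitting probabilities. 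Your approach is more self-contained (no appeal to \cite{Bell2020}) and makes transparent why the limit depends only on $\delta(q_0,(m)_k)$; the paper's incidence-matrix argument, on the other hand, does not need the absorbing-state structure and would apply verbatim to more general automatic weights than indicator functions.
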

\begin{proof}
	We recall that if $n \in M_i$ then also $n\cdot k + j \in M_i$, for any $j \in \{0,\ldots, k-1\}$.
	This shows that the sequence $ \frac{\abs{M_i \cap [mk^{\nu}, (m+1)k^{\nu}]}}{k^{\nu}}$ is monotonously increasing and, obviously, bounded by $1$.
	Thus, the limit exists.
	
	Moreover, we recall that $M_i$ is a $k$-automatic set and, thus, there exists an automaton 
	$$\mathcal{A}^{(i)} = (Q^{(i)}, \Sigma_k, \delta^{(i)}, q_0^{(i)}, \{0,1\}, \tau^{(i)})$$ such that $\ind_{[n \in M_i]} = \tau^{(i)}(\delta^{(i)}(q_0^{(i)}, (n)_k))$.
	We define $\mathcal{A}$ as the product of these $s$ automata, i.e. $$\mathcal{A} = (Q, \Sigma_k, \delta, q_0) = (Q^{(1)} \times \cdots \times Q^{(s)}, \Sigma_k, \delta^{(1)} \times \cdots \times \delta^{(s)}, (q_0^{(1)}, \ldots, q_0^{(s)}))\, .$$
	Let $\tau_i := \tau^{(i)} \circ \pi_i$, where $\pi_i$ is the projection to the $j$-th coordinate. 
	It follows directly that $\tau^{(i)} (\delta^{(i)}(q_0^{(i)}, (n)_k)) = \tau_i(\delta(q_0, (n)_k))$.
	
	We assume without loss of generality that $Q = \{q_0, q_1, \ldots, q_d\}$ and use the incidence matrix $A$\footnote{See also Section~\ref{sec:computability} for the definition of the incidence matrix. We let $A$ (instead of $M$) denote here the incidence matrix in order to avoid any confusion with the sets $M_i$.} of $\mathcal{A}$ to compute the limit in Equation~\eqref{eq_limit_M_i}.
	We see directly, that $mk^{\nu} + r \in M_i$ if and only if $\tau_i(\delta(\delta(q_0, (m)_k), (r)_k^{\nu})) = 1$.
	When we consider the sum over $r<k^{\nu}$, we can use the incidence matrix as it encodes the sum of all possible transitions by words of length $1$.
	Indeed, we find
	\begin{align}\label{eq_something}
		\frac{\abs{M_i \cap [mk^{\nu}, (m+1)k^{\nu}-1]}}{k^{\nu}} = v_{\tau_i} \cdot (A/k)^{\nu} \cdot e_{\delta(q_0, (m)_k)},
	\end{align}
	where $v_{\tau_i} = (\tau_i(q_0), \tau_i(q_1), \ldots, \tau_i(q_d))^T$ and $e_{q_j}$ denotes the $j$-th unit vector (here we use the convention that the indices start with $0$).
	
	It is easy to see that $1$ is the largest eigenvalue of $A/k$, but it can happen that it is not the unique eigenvalue of $A/k$ with absolute value $1$.
	The limit on the right-hand side of Equation~\eqref{eq_something} can be dealt with as in the proof of~\cite[Proposition 2.1]{Bell2020} which shows that \eqref{eq_something} converges to a rational number along a subsequence, since the output of $\mathcal{A}$ is always rational. This finishes the proof as we know that the limit exists.
\end{proof}

\begin{corollary}
	We have for any $q \in Q$
	\begin{align*}
		d_{1,q} + \ldots + d_{s,q} = 1.
	\end{align*}
\end{corollary}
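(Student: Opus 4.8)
The plan is to read off the identity directly from the structural properties of the sets $M_i$ recorded in Proposition~\ref{prop:decomposition}: the sets $M_i$ with $1\le i\le s$ are pairwise disjoint, their union equals $\mathbb N\setminus M_0$, and $M_0$ has upper Banach density $0$. Since the numbers $d_{i,q}$ are indexed by states of the form $q=\delta(q_0,(m)_k)$, I may assume that $q$ is reachable and fix an integer $m$ with $\delta(q_0,(m)_k)=q$; by \eqref{eq_something} the value $d_{i,q}$ does not depend on the choice of such an $m$.

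First I would introduce, for every $\nu\ge0$, the interval $I_\nu:=[mk^{\nu},(m+1)k^{\nu}-1]$, which contains exactly $k^{\nu}$ integers. Because the sets $M_1,\dots,M_s$ are pairwise disjoint with $\bigcup_{i=1}^{s}M_i=\mathbb N\setminus M_0$, counting the elements of $I_\nu$ gives
\begin{align*}
	\sum_{i=1}^{s}\abs{M_i\cap I_\nu}=k^{\nu}-\abs{M_0\cap I_\nu}.
\end{align*}
Dividing by $k^{\nu}$ and letting $\nu\to\infty$, the left-hand side converges to $\sum_{i=1}^{s}d_{i,q}$ by Lemma~\ref{le_density_M_i}, being a finite sum of convergent sequences.

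It then remains to check that $\abs{M_0\cap I_\nu}/k^{\nu}\to0$. This is where the upper Banach density enters: the lengths $\abs{I_\nu}=k^{\nu}$ tend to infinity, so the definition of the upper Banach density, together with the fact proved in Proposition~\ref{prop:decomposition} that it vanishes for $M_0$, forces $\abs{M_0\cap I_\nu}/\abs{I_\nu}\to0$. Combining the two displays yields $\sum_{i=1}^{s}d_{i,q}=1$, as desired. There is no real obstacle here; the only point requiring a little attention is to make sure that the intervals along which the limits of Lemma~\ref{le_density_M_i} are computed have lengths going to infinity, so that the Banach-density hypothesis on $M_0$ applies verbatim.
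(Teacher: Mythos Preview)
Your proof is correct and follows essentially the same approach as the paper's one-line argument, which simply says the result follows from Lemma~\ref{le_density_M_i} together with the fact that the density of $M_0$ is $0$; you have just spelled out the details. The only cosmetic difference is that you invoke the upper Banach density of $M_0$ rather than its ordinary density, but either suffices here since $m$ is fixed and both vanish by Proposition~\ref{prop:decomposition}.
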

\begin{proof}
	This follows directly from Lemma~\ref{le_density_M_i} and the fact that the density of $M_0$ exists and equals $0$.
\end{proof}

Lemma~\ref{le_density_M_i} allows us to compute the density of $\inda$ in the same intervals, as long as $n_{\ell}$ is slowly varying.

\begin{proposition}\label{pr_evaluate_m}
	Let $m \in \N$ and assume that $\beta = 1$. Then
	\begin{align*}
		\lim_{\nu \to \infty}\frac{1}{g(k^{\nu})} \sum_{g(mk^{\nu})\leq \ell < g((m+1)k^{\nu})} \inda = \sum_{1\leq i \leq s} d_{i,\delta(q_0, (m)_k)} \cdot d(b_i(n_{\ell}), \alpha).
	\end{align*}
\end{proposition}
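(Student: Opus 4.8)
The plan is to introduce an auxiliary scale $\lambda$, to subdivide the interval $[mk^\nu,(m+1)k^\nu)$ into the $k^\lambda$ blocks $[m''k^{\nu-\lambda},(m''+1)k^{\nu-\lambda})$ with $mk^\lambda\le m''<(m+1)k^\lambda$, and to exploit the covering property of Proposition~\ref{prop:decomposition}: if $m''\in M_i$ then $a(n)=b_i(n)$ for every $n$ in the block $[m''k^{\nu-\lambda},(m''+1)k^{\nu-\lambda})$, so on that block $\inda=\indbi$. One then lets first $\nu\to\infty$ and afterwards $\lambda\to\infty$. Recall that each $b_i$ is primitive and prolongable, so the density $d(b_i(n_\ell),\alpha)$ exists by the standing hypothesis of Theorem~\ref{Th1}.

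The first ingredient is the only place where $\beta=1$ enters essentially: since $g$ is regularly varying of index $\beta=1$, for every fixed $t\ge 1$ one has $g(tk^{\nu-\lambda})/g(k^\nu)\to t/k^\lambda$ as $\nu\to\infty$, and hence the number of indices $\ell$ with $m''k^{\nu-\lambda}\le n_\ell<(m''+1)k^{\nu-\lambda}$, namely $g((m''+1)k^{\nu-\lambda})-g(m''k^{\nu-\lambda})$, is asymptotically $g(k^\nu)/k^\lambda$. For a block with $m''\in M_i$, writing $L_1=g(m''k^{\nu-\lambda})$ and $L_2=g((m''+1)k^{\nu-\lambda})$, the convergence of $\tfrac1L\sum_{\ell\le L}\indbi$ to $d(b_i(n_\ell),\alpha)$ together with $L_1,L_2=O(g(k^\nu))$ gives, by subtraction,
\[
\frac{1}{g(k^\nu)}\sum_{L_1\le\ell<L_2}\indbi=\frac{L_2-L_1}{g(k^\nu)}\,d(b_i(n_\ell),\alpha)+o(1)\;\longrightarrow\;\frac{1}{k^\lambda}\,d(b_i(n_\ell),\alpha).
\]
For the blocks with $m''\in M_0$ I would simply bound the inner sum by the block length, so their total contribution is at most $|M_0\cap[mk^\lambda,(m+1)k^\lambda-1]|/k^\lambda+o(1)$, which tends to $0$ as $\lambda\to\infty$ because $M_0$ has upper Banach density $0$ by Proposition~\ref{prop:decomposition}.

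Summing the block contributions and grouping the $m''$ according to which $M_i$ they lie in, one obtains, for every fixed $\lambda$,
\[
\limsup_{\nu\to\infty}\Bigl|\frac{1}{g(k^\nu)}\sum_{g(mk^\nu)\le\ell<g((m+1)k^\nu)}\inda-\sum_{1\le i\le s}\frac{|M_i\cap[mk^\lambda,(m+1)k^\lambda-1]|}{k^\lambda}\,d(b_i(n_\ell),\alpha)\Bigr|\le\varepsilon(\lambda),
\]
with $\varepsilon(\lambda)\to 0$. Letting $\lambda\to\infty$ and invoking Lemma~\ref{le_density_M_i}, which gives $|M_i\cap[mk^\lambda,(m+1)k^\lambda-1]|/k^\lambda\to d_{i,\delta(q_0,(m)_k)}$, then yields the claimed identity; the boundary case $m=0$ is handled identically, using $g(0)=1$. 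The main obstacle I anticipate is the bookkeeping of the two interchanged limits: one must ensure that the $o(1)$ errors produced by the per-block density subtractions do not accumulate (there are $k^\lambda$ of them, but $\lambda$ is held fixed while $\nu\to\infty$, so this is harmless) and that the $M_0$-contribution is genuinely $o_{\lambda\to\infty}(1)$; both rest on the quantitative consequences of $\beta=1$ and of the upper-Banach-density-zero statement in Proposition~\ref{prop:decomposition}, so the real content is organizational rather than a new idea.
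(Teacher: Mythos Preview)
Your proposal is correct and follows essentially the same approach as the paper: introduce an auxiliary scale $\lambda$, split $[mk^\nu,(m+1)k^\nu)$ into $k^\lambda$ subblocks, use Proposition~\ref{prop:decomposition} to replace $a$ by $b_i$ on the subblocks with $m''\in M_i$, invoke the density hypothesis and regular variation with $\beta=1$ to evaluate each subblock contribution as $k^{-\lambda}d(b_i(n_\ell),\alpha)$, bound the $M_0$-blocks trivially, and finally let $\lambda\to\infty$ via Lemma~\ref{le_density_M_i}. The paper's write-up differs only cosmetically (it shifts $\nu\mapsto\nu+\lambda$ rather than subdividing at scale $\nu-\lambda$, and it quotes equation~\eqref{eq_density_interval} for the subblock evaluation instead of rederiving it), but the argument is the same.
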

\begin{proof}

	Let $\varepsilon >0$ and choose $\lambda \in \N$ such that 
	\begin{align*}
		\abs{\frac{\abs{M_i \cap [mk^{\lambda}, (m+1)k^{\lambda}-1]}}{k^{\lambda}} - d_{i,\delta(q_0, (m)_k)}} \leq \varepsilon,
	\end{align*}
	for $i = 1, \ldots, s$ according to Lemma~\ref{le_density_M_i}.
	This allows us to write
	\begin{align*}
		\lim_{\nu \to \infty}&\frac{1}{g(k^{\nu})} \sum_{g(mk^{\nu})\leq \ell < g((m+1)k^{\nu})} \inda\\
		 &= \lim_{\nu \to \infty}\frac{1}{g(k^{\nu+\lambda})} \sum_{r < k^{\lambda}} \sum_{g(mk^{\nu + \lambda} + r k^{\nu})\leq \ell < g(mk^{\nu+\lambda} + (r+1) k^{\nu} )} \inda\\
		 &= \lim_{\nu \to \infty} \frac{g(k^{\nu})}{g(k^{\nu + \lambda})} \sum_{0\leq i \leq s} \sum_{\substack{r < k^{\lambda}\\ mk^{\lambda} + r \in M_i}} \frac{1}{g(k^{\nu})} \sum_{g(mk^{\nu + \lambda} + r k^{\nu})\leq \ell < g(mk^{\nu+\lambda} + (r+1) k^{\nu} )} \inda.
	\end{align*}
	Obviously, $g(k^{\nu})/g(k^{\nu+\lambda})$ converges to $k^{-\lambda}$, as $g$ is regularly varying. In the last sum, we can replace for $i \neq 0$, $a$ by $b_i$. Moreover, theses sums (together with the factor $1/g(k^{\nu})$) then converge to
	\begin{align*}
		\rb{(m k^{\lambda} + r+1)^{\beta} - (mk^{\lambda} + r)^{\beta}} d(b_i(n_{\ell}), \alpha),
	\end{align*}
	by~\eqref{eq_density_interval}.
	We recall that that the number of $r \in M_i \cap [mk^{\lambda}, (m+1) k^{\lambda}-1]$ is approximately $k^{\lambda} \cdot d_{i,\delta(q_0,(m)_k)}$.

	This gives in total,
	\begin{align*}
		\lim_{\nu \to \infty}&\frac{1}{g(k^{\nu})} \sum_{g(mk^{\nu})\leq \ell < g((m+1)k^{\nu})} \inda\\
		 &= \sum_{1\leq i \leq s} \sum_{\substack{r < k^{\lambda}\\ mk^{\lambda} + r \in M_i}} d(b_i(n_{\ell}), \alpha) + O(\abs{M_0 \cap [mk^{\lambda}, (m+1) k^{\lambda}-1]})\\
		&= \sum_{1\leq i \leq s} d_{i,\delta(q_0, (m)_k)} \cdot d(b_i(n_{\ell}), \alpha) + O(\varepsilon),
	\end{align*}
	which finishes the proof as $\varepsilon$ was arbitrary.
\end{proof}
\begin{remark}
	Most of the proof of Proposition~\ref{pr_evaluate_m} also works for $\beta >1$. The main difficulty seems to be the evaluation of
	\begin{align*}
		\frac{1}{k^{\lambda \beta}} \sum_{\substack{r <k^{\lambda}\\ mk^{\lambda} + r \in M_i}}(m k^{\lambda} + r+1)^{\beta} - (mk^{\lambda} + r)^{\beta}.
	\end{align*}
	It is clear that the limit for $\lambda \to \infty$ exists, as it is monotonously increasing and bounded, but showing the rationality of the limit seems to be much harder than for $\beta = 1$.
\end{remark}

Now we are ready to tackle the proof of Theorem~\ref{th_upper_density}, very similarly to the corresponding proof in~\cite{Bell2020}.

\begin{proof}[Proof of Theorem~\ref{th_upper_density}]
We note that the Theorem actually holds for any output function of the original automatic sequence, as long as the values are rational numbers. For simplicity of notation, we will only consider the case of $\inda$.

We put for $m \in \N$,
\begin{align*}
	\gamma(m) := \limsup_{\nu \in \N, r < k^{\nu}} \frac{1}{g(mk^{\nu} + r)} \sum_{\substack{\ell \in \N\\ n_{\ell} \leq mk^{\nu} + r}} \inda.
\end{align*}
In particular $\gamma(0) = \bar{d}(a(n_{\ell}), \alpha)$.
We let $\mathcal{S}$ denote the set of integers $m$ such that $\gamma(m) =  \bar{d}(a(n_{\ell}), \alpha)$.
It is easy to show that $\mathcal{S}$ is infinite, as for $m \in \mathcal{S}$ at least one of $k m + r$ for $r = 0, \ldots, k-1$ belongs to $\mathcal{S}$ as well.

Let $\mathcal{A}$ be as in the conclusion of Lemma~\ref{le_density_M_i}.
It follows from the pigeonhole principle that, if $m'\in \mathcal{S}$ is large enough, we can decompose it as $m' = m_1 k^{\lambda_2 + \lambda_3} + m_2 k^{\lambda_3} + m_3$, where $\lambda_i \in \N_{>0}, 0 \leq m_i < k^{\lambda_i}$ for $i = 1,2,3$, $m_1 > 0$ and
\begin{align}\label{eq_q_1}
	\delta(q_0,(m_1)_k) = \delta(q_0, (m_1 k^{\lambda_2} +m_2)_k) =:q_1.
\end{align}
It follows from the definition of $\mathcal{S}$ and $\gamma$ that also $m := m_1k^{\lambda_2} + m_2 \in \mathcal{S}$.
Our goal is to show that $m^{(h)} \in \mathcal{S}$ for any $h \in \N$, where
\begin{align}
	m^{(h)} := m_1 k^{h \lambda_2} + m_2 \frac{k^{h \lambda_2}-1}{k^{\lambda_2}-1}\in \mathcal{S}.
\end{align}
That is, we want to show that we can repeat the loop corresponding to $m_2$ (c.f. \eqref{eq_q_1}) while remaining in the set $\mathcal{S}$.
This makes intuitively sense, as $m_2$ needs to ``maximize its contribution'' corresponding to $\gamma(m_1k^{\lambda_2} + m_2) = \bar{d}(a(n_{\ell}), \alpha)$.
The technical problem (and reason for Lemma~\ref{le_upper_density}) is the dependency of the normalizing weight on $m_2$.

To show that $m^{(h)} \in \mathcal{S}$ for any $h \geq 1$, we will first use Lemma~\ref{le_upper_density} to show that $m^{(2)} \in \mathcal{S}$ and then use an inductive argument. From this we will finish the proof easily.

We can write
\begin{align*}
	\frac{1}{g(k^{\nu})}\sum_{\substack{\ell \in \N\\ n_{\ell} \leq mk^{\nu} + r}}  \inda &= \frac{1}{g(k^{\nu})}\sum_{\ell < g(m_1 k^{\nu + \lambda_2})} \inda + \frac{1}{g(k^{\nu})}\sum_{\ell = g(m_1 k^{\nu + \lambda_2})}^{g(m  k^{\nu})-1} \inda \\
		&\qquad + \frac{1}{g(k^{\nu})}\sum_{\ell = g(m k^{\nu})}^{g(m k^{\nu} + r)} \inda.
\end{align*}
Using Proposition~\ref{pr_evaluate_m} to rewrite the first sum for $\nu \to \infty$ gives
\begin{align*}
	\frac{1}{g(k^{\nu})}\sum_{\ell < g(m_1 k^{\nu + \lambda_2})} \inda &= \sum_{m_1' < m_1} \frac{1}{g(k^{\nu})}\sum_{\ell = g(m_1' k^{\nu+\lambda_2})}^{g((m_1'+1)k^{\nu+\lambda_2})-1} \inda\\
		&\to_{\nu \to \infty} k^{\lambda_2} \underbrace{\sum_{1\leq i \leq s} d(b_i(n_{\ell}), \alpha) \sum_{m_1' < m_1} d_{i, \delta(q_0, (m_1')_k)}}_{:= \kappa_1} + o(1).
\end{align*}
For the second sum we, furthermore, use Equation~\eqref{eq_q_1} to find
\begin{align*}
	\frac{1}{g(k^{\nu})}\sum_{\ell = g(m_1 k^{\nu + \lambda_2})}^{g(m k^{\nu})-1} \inda &\to_{\nu \to \infty} \sum_{1\leq i \leq s} d(b_i(n_{\ell}), \alpha) \sum_{m_2' <m_2} d_{i, \delta(q_0, (m_1 k^{\lambda_2} + m_2')_k)} + o(1)\\
		&= \underbrace{\sum_{1\leq i \leq s} d(b_i(n_{\ell}), \alpha) \sum_{m_2' <m_2} d_{i, \delta(q_1, (m_2')_k^{\lambda_2})}}_{:= \kappa_2} + o(1).
\end{align*}

This, finally, allows us to write
\begin{align*}
	\bar{d}(a(n_{\ell}), \alpha) &= \limsup_{\nu \in \N, r< k^{\nu}} \frac{\sum_{\ell \leq g(mk^{\nu} + r)} \inda}{g(mk^{\nu} + r)}\\
		&= \limsup_{\nu \in \N, r< k^{\nu}} \frac{\frac{1}{g(k^{\nu})} \sum_{\ell \leq g(mk^{\nu} + r)} \inda}{g(mk^{\nu} + r)/g(k^{\nu})}\\
		&= \limsup_{\nu \in \N, r< k^{\nu}} \frac{k^{\lambda_2}\kappa_1 + \kappa_2 + \frac{1}{g(k^{\nu})} \sum_{\ell = g(m k^{\nu})}^{g(mk^{\nu} + r)} \inda}{m_1k^{\lambda_2} + m_2 + \frac{r}{k^{\nu}}}.
\end{align*}
Here we used the fact that the $\lim_{N \to \infty} g(\delta N)/g(N)$ converges uniformly to $\delta^{\beta}$ in any compact interval.

Thus, we can choose $\nu_j \in \N, r_j < k^{\nu_j}$ for $j \in \N$, such that $\nu_j \to \infty$ for $j\to \infty$ and
\begin{align*}
	\bar{d}(a(n_{\ell}), \alpha)= \lim_{j \to \infty} \frac{k^{\lambda_2}\kappa_1 + \kappa_2 + \frac{1}{g(k^{\nu_j})} \sum_{\ell = g(m k^{\nu_j})}^{g(mk^{\nu_j} + r_j)} \inda}{m_1k^{\lambda_2} + m_2+ \frac{r_j}{k^{\nu_j}}}.
\end{align*}
  	
Next, we aim to use Lemma~\ref{le_upper_density}. Therefore, we consider
\begin{align}\label{eq_shortened}
	\limsup_{j \to \infty} \frac{1}{g(m_1 k^{\nu_j} +  r_j)} \sum_{\ell \leq g(m_1 k^{\nu_j} + r_j)} \inda
\end{align}
which can be computed analogously to the computation above and equals
\begin{align*}
	 \limsup_{j \to \infty} \frac{\kappa_1 + \frac{1}{g(k^{\nu_j})} \sum_{\ell = g(m_1 k^{\nu_j})}^{g(m_1 k^{\nu_j} + r_j)} \inda}{m_1 + \frac{r_j}{k^{\nu_j}}}.
\end{align*}
We are already almost in a position to use Lemma~\ref{le_upper_density}. The only problem is the difference between
\begin{align}\label{eq_sn_1}
	\limsup_{j\to\infty} \frac{1}{g(k^{\nu_j})} \sum_{\ell = g(m k^{\nu_j})}^{g(mk^{\nu_j} + r_j)} \inda
\end{align}
and
\begin{align}\label{eq_sn_2}
	\limsup_{j\to \infty} \frac{1}{g(k^{\nu_j})} \sum_{\ell = g(m_1 k^{\nu_j})}^{g(m_1 k^{\nu_j} + r_j)} \inda.
\end{align}

However, both of these sums can be approximated as follows. Fix any $\lambda \in \N$ and let $ r_j = r_j^{(1)} k^{\nu_j - \lambda} + r_j^{(2)}$ for large enough $j$. Then we find by Proposition~\ref{pr_evaluate_m} and Equation~\eqref{eq_q_1} that~\eqref{eq_sn_2} equals
\begin{align*}
	\limsup_{j\to \infty} \sum_{m' < r_j^{(1)}} &\frac{1}{g(k^{\nu_j})} \sum_{\ell = g(m_1 k^{\nu_j} + m' k^{\nu_j - \lambda})}^{g(m_1 k^{\nu_j} + (m'+1) k^{\nu_j - \lambda})-1} \inda + \frac{1}{g(k^{\nu_j})} \sum_{\ell = g(m_1 k^{\nu_j} + r_j^{(1)} k^{\nu_j - \lambda})}^{g(m_1 k^{\nu_j} + r_j)-1} \inda\\
	&= \frac{1}{k^{\lambda}} \sum_{m' < r_j^{(1)}} \sum_{1 \leq i \leq s} d_{i, \delta(q_0, (m_1k^{\lambda} + m')_k)} d(b_i(n_{\ell}), \alpha) + O\rb{\frac{1}{k^{\lambda}}}\\
	&= \frac{1}{k^{\lambda}} \sum_{m' < r_j^{(1)}} \sum_{1 \leq i \leq s} d_{i, \delta(q_1, (m')_k^{\lambda})} d(b_i(n_{\ell}), \alpha) + O\rb{\frac{1}{k^{\lambda}}}.
\end{align*}    
An analogous computation shows that~\eqref{eq_sn_1} equals the exact same expression, that is, they differ by $O(k^{-\lambda})$. As $\lambda$ was arbitrary, this shows that ~\eqref{eq_sn_1} equals ~\eqref{eq_sn_2}. Moreover, it is clear that ~\eqref{eq_shortened} is bounded form above by $\bar{d}(a(n_{\ell}), \alpha)$. Thus, we can finally apply Lemma~\ref{le_upper_density} which shows, by analogous computations for $m^{(2)} = m_1 k^{2 \lambda_2+ \lambda_3} + m_2(k^{\lambda_2} + 1) k^{\lambda_3} + m_1$, that 
\begin{align*}
	\limsup_{j\to\infty} \frac{1}{g(m^{(2)}k^{\nu_j} + r_j)} \sum_{\ell \leq g(m^{(2)}k^{\nu_j}+ r_j)} \inda \geq \bar{d}(a(n_{\ell}), \alpha).
\end{align*}
However, it is clear that it can not be strictly larger than $\bar{d}(a(n_{\ell}), \alpha)$. Thus, $m^{(2)} \in \mathcal{S}$.
A simple induction, just as in~\cite{Bell2020}, can be used to show that $m^{(h)} \in \mathcal{S}$ for any $h \in \N$.
Moreover, we find directly by Proposition~\ref{pr_evaluate_m} and Equation~\eqref{eq_q_1} that
\begin{align*}
	\limsup_{j\to \infty} \frac{1}{g(k^{\nu_j})} \sum_{h \leq g(m^{(h)}k^{\nu_j} + r_j)} \inda 
		&= k^{h\lambda_2} \underbrace{\sum_{m_1' < m_1} \sum_{1\leq i \leq s} d_{i, \delta(q_0, (m_1')_k)} d(b_i(n_{\ell}), \alpha)}_{=:\kappa_1' \in \Q}\\
			&\qquad +  \sum_{h' < h} k^{h'\lambda_2} \underbrace{\sum_{m_2' < m_2} \sum_{1 \leq i \leq s} d_{i, \delta(q_1, (m_2')_k^{\lambda_2})} 
			d(b_i(n_{\ell}), \alpha)}_{=:\kappa_2'\in \Q} + O(1).
\end{align*}
In particular
\begin{align*}
	\bar{d}(a(n_{\ell}), \alpha) &= \frac{ \kappa_1' k^{h \lambda_2} + \kappa_2' \frac{k^{h \lambda_2}-1}{k^{\lambda_2}-1} + 
	O(1)}{m_1 k^{h \lambda_2} + m_2 \frac{k^{h \lambda_2}-1}{k^{\lambda_2}-1} + O(1)}\\
		&= \frac{\kappa_1' + \frac{\kappa_2'}{k^{\lambda_2}-1} + O(k^{-h\lambda_2})}{m_1 + \frac{m_2}{k^{\lambda_2} -1} + O(k^{-h\lambda_2})}.
\end{align*}

As $h\in \N$ was arbitrary, we find in total
\begin{align*}
	\bar{d}(a(n_{\ell}), \alpha) = \frac{\kappa_1' + \frac{\kappa_2'}{k^{\lambda_2}-1}}{m_1 + \frac{m_2}{k^{\lambda_2} -1}} \in \Q,
\end{align*}
which finishes the proof for the upper density.
For the lower density we only have to use the different output function given by $\tau'(\beta) = 1$ for $\beta \neq \alpha$ and $\tau'(\alpha) = 0$.
Then the upper density of the corresponding sequence is given by $1-\underline{d}(a(n_{\ell}), \alpha)$, which concludes the proof.
\end{proof}

\bibliographystyle{amsplain}

\end{document}